\numberwithin{equation}{section}
\newtheorem{thm}{Theorem}[section]
\newtheorem{cor}[thm]{Corollary}
\newtheorem{lem}[thm]{Lemma}
\newtheorem{prop}[thm]{Proposition}
\newtheorem{prop-defn}[thm]{Proposition-Definition}
\theoremstyle{definition}
\newtheorem{rk}[thm]{Remark}
\newtheorem{ex}[thm]{Example}
\newtheorem{defn}[thm]{Definition}
\newcommand{\C}{\mathbb C}
\newcommand{\R}{\mathbb R}
\newcommand{\B}{\mathbb B}
\newcommand{\Pp}{\mathbb P}
\newcommand{\Z}{\mathbb Z}
\newcommand{\M}{\mathbb M}
\newcommand{\N}{\mathbb N}
\newcommand{\E}{\mathbb E}
\newcommand{\Cl}{{\rm C}\ell}
\newcommand{\Exp}{{\mathbf E}{\rm xp}}
\newcommand\ind{{\rm 1\kern-.30em I}}
\def \bui#1#2{\mathrel{\mathop{\kern 0pt#1}\limits^{#2}}}
\newcommand{\biindice}[3]%
{%

\begin{array}[t]{c}
{\displaystyle #1}\\
{\scriptstyle #2}\\
{\scriptstyle #3}
\end{array}

}
\begin{document}

\allowdisplaybreaks

\renewcommand{\thefootnote}{}

\newcommand{\arXivNumber}{2303.04013}

\renewcommand{\PaperNumber}{010}

\FirstPageHeading

\ShortArticleName{A Pseudodifferential Analytic Perspective on Getzler's Rescaling}

\ArticleName{A Pseudodifferential Analytic Perspective\\ on Getzler's Rescaling\footnote{This paper is a~contribution to the Special Issue on Differential Geometry Inspired by Mathematical Physics in honor of Jean-Pierre Bourguignon for his 75th birthday. The~full collection is available at \href{https://www.emis.de/journals/SIGMA/Bourguignon.html}{https://www.emis.de/journals/SIGMA/Bourguignon.html}}}

\Author{Georges HABIB~$^{\rm ab}$ and Sylvie PAYCHA~$^{\rm c}$}

\AuthorNameForHeading{G.~Habib and S.~Paycha}

\Address{$^{\rm a)}$~Department of Mathematics, Faculty of Sciences II, Lebanese University,\\
\hphantom{$^{\rm a)}$}~P.O.~Box, 90656 Fanar-Matn, Lebanon}
\EmailD{\href{ghabib@ul.edu.lb}{ghabib@ul.edu.lb}}
\Address{$^{\rm b)}$~Universit\'e de Lorraine, CNRS, IECL, France}
\URLaddressD{\url{https://iecl.univ-lorraine.fr/membre-iecl/habib-georges/}}

\Address{$^{\rm c)}$~Institut f\"ur Mathematik, Universit\"at Potsdam, Campus Golm,\\
\hphantom{$^{\rm c)}$}~Haus 9, Karl-Liebknecht-Str. 24-25, 14476 Potsdam, Germany}
\EmailD{\href{paycha@math.uni-potsdam.de}{paycha@math.uni-potsdam.de}}
\URLaddressD{\url{https://www.math.uni-potsdam.de/~paycha/paycha/Home.html}}

\ArticleDates{Received March 08, 2023, in final form January 11, 2024; Published online January 30, 2024}

\Abstract{Inspired by Gilkey's invariance theory, Getzler's rescaling method and Scott's approach to the index via Wodzicki residues, we give a localisation formula for the $\Z_2$-graded Wodzicki residue of the logarithm of a class of differential operators acting on sections of a spinor bundle over an even-dimensional manifold. This formula is expressed in terms of another local density built from the symbol of the logarithm of a limit of rescaled differential operators acting on differential forms. When applied to complex powers of the square of a~Dirac operator, it amounts to expressing the index of a Dirac operator in terms of a local density involving the logarithm of the Getzler rescaled limit of its square.}

\Keywords{index; Dirac operator; Wodzicki residue; spinor bundle}

\Classification{58J40; 47A53; 15A66}

\begin{flushright}
{\it Dedicated to J.P.~Bourguignon for his 75th birthday}
\end{flushright}

\renewcommand{\thefootnote}{\arabic{footnote}}
\setcounter{footnote}{0}

\section{Introduction}
On a closed Riemannian manifold $(M^n,g)$, the algebra $\Psi_{\rm cl}(M, E)$ of classical pseudodifferential operators acting on the smooth sections of a finite rank vector bundle $E$ over $M$, admits a unique (up to a multiplicative factor) trace, called the Wodzicki \cite{W} or the noncommutative residue, built from a residue density defined as follows. Given $Q$ in $\Psi_{\rm cl}(M,E)$, the residue of $Q$ is the integral over $M$ of the residue density $\omega_Q^ {\rm Res}(x):= {\rm res}(\sigma(Q)(x, \cdot)) {\rm d}x^1\wedge \dots\wedge {\rm d}x^n$ defined in (\ref{eq:Wodzres}) with
\[
{\rm res}(\sigma(Q)(x, \cdot)):=\frac{1}{(2\pi)^n} \int_{|\xi|=1}{\rm tr}^E(\sigma_{-n}(Q) (x, \xi)) {\rm d}_S\xi .
\]
Here, $n$ is the dimension of $M$, ${\rm tr}^E$ stands for the fibrewise trace on $\operatorname{End}(E)$, $(x,\xi)$ is an element in $T^*M$, and $\sigma_{-n}(Q)(x, \xi)$ is the $(-n)$-th homogeneous part of the symbol at $(x, \xi)$. The Wodzicki residue extends beyond classical pseudodifferential operators to the logarithm~$\log_\theta (Q)$ (see Appendix~\ref{appendix2} for the precise definition) of a pseudodifferential operator $Q$ with Agmon angle~$\theta$ (see \eqref{eq:reslog}), giving rise to the logarithmic residue $\omega_{\log_\theta (Q)}^ {\rm Res}(x)$. The Wodzicki residue is local in so far as it is expressed as the integral on $M$ of a volume form involving the $(-n)$-homogeneous component of the symbol. So it comes as no surprise that the index of the Dirac operator can be expressed in terms of the residue. For a $\Z_2$-graded vector bundle $E=E^+\oplus E^-$, the index of an elliptic odd operator $\slashed D^+\colon C^\infty(M, E^+)\to C^\infty(M, E^-)$ with formal adjoint $\slashed D^-=\bigl(\slashed D^+\bigr)^*$ can be written \cite{Sc}
\[
{\rm Index}\bigl(\slashed D^+\bigr)=-\frac{1}{2}{\rm sres}\bigl(\log _\theta\bigl(\slashed D^2\bigr)\bigr)=-\frac{1}{2} \int_M \omega_{\log_\theta (\slashed D^2)}^{{\rm sRes}}(x),
\]
where $\slashed D:= \left[\begin{smallmatrix}0&\slashed D^-\\
\slashed D^+& 0\end{smallmatrix}\right]$ acting on $E^+\oplus E^-$, so that $\slashed D^2=\slashed D^-\slashed D^+\oplus\slashed D^+\slashed D^-$ and $\theta=\pi$. The graded residue ``sres'' is defined in the same way as the residue with the fibrewise trace on $\operatorname{End}(E)$ replaced by the $\Z_2$-graded trace and $\omega_{\log _\theta(\slashed D^2)}^ {\rm sRes}(x):= {\rm sres}\bigl(\sigma\bigl(\log _\theta\bigl(\slashed D^2\bigr)\bigr)(x, \cdot)\bigr){\rm d}x^1\wedge\dots\wedge {\rm d}x^n$.

Inspired by the approach adopted in \cite{Sc}, we revisit Geztler's rescaling in the context of~index theory in the light of the logarithmic Wodzicki residue. For a class of differential operators acting on spinors which includes $\slashed D^2$, we express the logarithmic residue density evaluated at a~point~$p$~in~$M$ in terms of another local density
\smash{$\widetilde \omega_{\log_\theta (\widetilde\Pp^{\lim})}^{\rm sres}(x)$} (see formula (\ref{eq:tildeomega}))
involving a~limit~$\tilde\Pp^{\lim}$ as the parameter $\lambda$ goes to zero of a family of operators \smash{$\tilde \Pp_\lambda^{\rm Ge}$} built from the original~one by rescaling it at the point $p$ (see equation~(\ref{eq:PPlambdaGeLambda})).
In this sense, equation~(\ref{eq:locform}) at the limit as $\lambda $ tends to zero, can be viewed as a localisation formula of the logarithmic residue at point $p$. For this purpose, we first single out a class of differential operators acting on smooth sections of a vector bundle~$E$, which we call {\it geometric}
with respect to a metric $g$, whose coefficients written in some local trivialisation are geometric sections (Definition~\ref{defn:geometricdiffop}). We consider polynomials in the jets of the vielbeins for the metric $g$ (Definition~\ref{defn:geometricpolE}) and inspired by Gilkey \cite{G}, we define their {\it Gilkey order} (at a point $p$) (see equation~\eqref{eq:polynomialsvielbeins}) to be the order of those jets. We call a differential operator geometric if its coefficients are geometric polynomials whose Gilkey order obeys a~compatibility condition involving the order of the operator, see equation~(\ref{eq:geomopGi}). Geometric differential operators enjoy nice transformation properties under local contractions~$({\mathfrak f}_\lambda)_{\lambda\in [0,1]}$ along local geodesics defined by means of exponential geodesic normal coordinates (see equation~\eqref{eq:flambdaUplambda} for the definition). Indeed a geometric differential operator with respect to $g$ transforms to one with respect to $g_\lambda$ (Proposition~\ref{pro:geometricopes}), where {\color{black}$g_\lambda=\lambda^{-2}{\mathfrak f}_\lambda^*g$}. This transformed metric can be viewed as the pull-back metric under the canonical projection ${\hat\pi\colon \M \to M}$ of the deformed manifold $\M$ via a deformation to the normal cone to $p$, see equation~(\ref{eq:pistarg}).

We first consider the bundle $E=\Lambda T^*M$. From a differential operator $P$ in $\Psi_{\rm cl}(M, \Lambda T^*M)$ acting on differential forms, we define a family of operators $\widetilde \Pp_\lambda^{\rm Ge}:=\lambda^{{\rm ord}(P)} U_\lambda^\sharp\mathfrak{f}_\lambda^\sharp P$ (see \eqref{eq:PPlambdaGeLambda}) using notations borrowed from \cite{vEY}, which are obtained under the combined action of the {\color{black} contractions~${\mathfrak f}_\lambda$} mentioned previously and the so-called {\it Getzler map} $U_\lambda$ that acts on tensors, see Definition~\ref{getzlerrescaling}.
 We call a geometric differential operator $P $ {\it rescalable} if $\widetilde \Pp_\lambda^{\rm Ge}$ admits a limit $\widetilde \Pp^{\lim}$ when ${\lambda\to 0}$ (Definition~\ref{defn:limitdiffope}). In Proposition~\ref{limitdiffope}, we give a necessary and sufficient condition for the rescalability of a geometric differential operator in $\Psi_{\rm cl}(M, \Lambda T^*M)$ and show that the coefficients of the limit $\widetilde \Pp^{\lim}$ are polynomial expressions in the jets of the Riemannian curvature tensor.\looseness=-1

A first result is the localisation formula (\ref{eq:loctildeomegadiff}) for a differential operator $P$ acting on differential forms. It involves a local $n$-degree form $\widetilde \omega_Q^ {\rm Res}(x)$, inspired by Scott's proof of the index theorem \cite[Section~3.5.3]{Sc} and defined for operators $Q$ in $\Psi_{\rm cl} (M, \Lambda T^*M)$ as (see equation~\eqref{eq:omegatildeRes}):
 \begin{equation}\label{eq:tildeomega}
 \widetilde \omega^{\rm Res}_Q(x):= \frac{1}{(2\pi)^n} \int_{S_x^*U_p }\left[ \sigma_{-n} (Q) (x, \xi) {\bf 1}_x\right]_{[n]} {\rm d}_S\xi.
 \end{equation}
 Here $U_p$ is a local exponential neighborhood of a point $p$ in $M$, $S_x^*U_p$ is the unit sphere in the cotangent space $T_x^*U_p$ at point $x$ and the integrand is the degree $n$-part of the differential form $\sigma_{-n} (Q) (x, \xi) {\bf 1}_x$.
 When restricted to operators in the range of a Clifford map, the local form~$\widetilde \omega^{\rm Res}_Q$ is proportional to the Wodzicki residue density (Corollary~\ref{cor:relationomegatilde}) and therefore becomes a global form. It further extends to logarithmic pseudodifferential operators $Q=\log_\theta(P)$ for a differential operator $P$ with Agmon angle $\theta$. In Proposition~\ref{prop:pullbacklocalform}, we show that the local $n$-form $\widetilde \omega^{\rm Res}_{\log _\theta(P)}$ at the point $\mathfrak{f}_\lambda(x)$ is the local $n$-form associated to the pull-back operator \smash{$U_\lambda^\sharp\mathfrak{f}_\lambda^\sharp (\log _\theta(P))$} at the point $x$.
 If moreover $P$ is rescalable, taking the limit as $\lambda $ tends to zero yields the {\it localisation formula}
 \begin{equation}\label{eq:loctildeomegadiff}
 \widetilde	 \omega_{\log_\theta (P)}^{{ {\rm Res}}}(p)= \widetilde\omega_{\log_\theta (\widetilde \Pp^{\lim})}^{{ {\rm Res}}}(x), \qquad \forall x\in U_p.
 \end{equation}
This formula can be applied to the Hodge Laplacian which is a geometric and rescalable operator (see Example~\ref{ex:Hodgelaplacian}).

We then consider the case of a spinor module $E=\Sigma M$ when $M$ is a spin manifold of even dimension. To define rescalability of geometric differential operators in $\Psi_{\rm cl}(M, \Sigma M)$ we use the identification $\Cl(TM)\otimes \C\simeq \operatorname{End}(\Sigma M),$ and the {\it Clifford map} $ c^g \colon \Cl( T M) \longrightarrow \operatorname{End}(\Lambda T ^*M)$ (see \eqref{eq:cliffend}) which sends an element of the Clifford algebra $\Cl( T M)$ on the tangent bundle to an endomorphism of $\Lambda T^*M$. We call a geometric differential operator $P$ in $\Psi_{\rm cl}(M, \Sigma M)$ rescalable {if~$c^g(P)$ defined in formula (\ref{eq:cgp}) by applying $c^g $ to the coefficients of the differential operator~$P$}, is rescalable in $\Psi_{\rm cl}(M, \Lambda T^*M)$. We further give a necessary and sufficient condition for the rescalability of geometric differential operators in $\Psi_{\rm cl}(M, \Sigma M)$, see Proposition~\ref{prop:conditionrescalibilityClifford}.\footnote{ In this part of the work, we consider even-dimensional spin manifolds, however our study extends to manifolds with a spin$^c$ structure. Indeed the construction relies on the identification $\Cl(TM)\otimes \C\simeq \operatorname{End}(\Sigma M)$ which can be extended to manifolds with a spin$^c$ structure, in which case we have the identification $\Cl(TM)\otimes \C\simeq \operatorname{End}(E)$ \cite[Theorem~2.13]{BHMMM} with $E$ a vector bundle isomorphic to the spinor bundle of the spin$^c$ bundle. %Since, the computations in our context do not change when taking spin or spin$^c$ manifolds,
 For simplicity, we restrict ourselves to spin manifolds.}

It follows from Proposition~\ref{prop:tildeomegavsomega} that for a differential operator $P\in \Psi_{\rm cl}(M, \Sigma M)$ with Agmon angle $\theta$, the form $\widetilde \omega^{\rm Res}_{\log_\theta (c^g(P))}$ defines a global density. In Corollary~\ref{cor:resclimClifford}, we infer from the above localisation formula (\ref{eq:loctildeomegadiff}) a second {\it localisation formula} for operators in ${\rm Diff}(M,\Sigma M)$:
\begin{equation}\label{eq:locform}\omega_{\log_\theta (P)}^{{ {\rm sRes}}}(p)=(-2{\rm i})^{n/2} \widetilde\omega_{\log_\theta (\widetilde \Pp^{\lim})}^{{ {\rm Res}}}(x),
\end{equation}
where $P$ is a rescalable geometric differential operator in ${\rm Diff}(M,\Sigma M)$ of Agmon angle $\theta$ which is even for the $\Z_2$-grading $\Sigma M=\Sigma^+ M\oplus \Sigma^- M$. This formula expresses the residue density~\smash{$\omega^{\rm sRes}_{\log_\theta(P)}(p)$} at a point $p$ in terms of a local density \smash{$ \widetilde\omega_{\log_\theta (\widetilde \Pp^{\lim})}^{{ {\rm Res}}}(x)$} of the limit $\widetilde \Pp^{\lim}$ of the rescaled operators $\widetilde \Pp_\lambda^{\rm Ge}$.

 The localisation formula \eqref{eq:locform} applied to the square of the Dirac operator (see Proposition~\ref{prop:Dg2Geztlerrescalable}), which is proven to be a rescalable geometric differential operator,
confirms the results of~\cite[Sec\-tion~3.5.3.3]{Sc} (identification of (3.5.3.12) and (3.5.3.40)). Although the limit oper\-ator~$\widetilde \Pp^{\lim}$ is expected to have a simpler form than the original operator as in the case of the Dirac operator, computing \smash{$\widetilde\omega_{\log_\theta (\widetilde \Pp^{\lim})}^{{ {\rm Res}}}(x)$} nevertheless remains a challenge since it involves its $(-n)$-th homogeneous symbol.

\section[The Wodzicki residue density for classical pseudodifferential operators]{The Wodzicki residue density\\ for classical pseudodifferential operators}

 In this section, we review the definition of the Wodzicki residue for classical pseudodifferential operators acting on sections of a given vector bundle. We recall the covariance property of the Wodzicki residue under local contractions (see Proposition~\ref{prop:flambdasharpA}). We also recall how the Wodzicki residue extends to logarithms of classical pseudodifferential operators with appropriate spectral properties and refer to this extension as logarithmic Wodzicki residue. Specialising to the trivial vector bundle, we show a localisation formula for the logarithmic residue of scalar differential operators. It identifies the logarithmic residue density at the point $p$ of a differential operator~$P$ with the logarithmic residue density at any point $x$ in a small neighborhood of $p$ of the same operator localised at $p$ (see Proposition~\ref{prop:reslim}).

\subsection{The Wodzicki residue for classical pseudodifferential operators}
Let $(E,\pi,M)$ be a vector bundle over $M$, an $n$-dimensional smooth manifold, of finite rank and let~$\Psi_{\rm cl}(M, E)$ denote the algebra of classical polyhomogeneous pseudodifferential operators acting on the space $C^\infty(M, E)$ of smooth sections of~$E$. These are linear maps $Q\colon C^\infty(M, E)\to C^\infty(M, E)$, which read $Q=\sum_{i\in I} Q_{U_i} +S_Q$, where given a partition of unity $(\chi_i, i\in I)$ of~$M$ subordinated to a finite open covering $(U_i, i\in I)$ of~$M$, the operators $Q_{U_i}:= \psi_i Q \chi_i $ are localisations of $Q$ in open subsets $U_i$ of $M$, and $S_Q$ is a smoothing operator~-- it maps any Sobolev section to a smooth section. Here $(\psi_i, i\in I)$ are smooth functions compactly supported with support in $U_i$ and which are identically equal to one on the support of $\chi_i$ for any $i\in I$. Since we are interested in singular linear forms which vanish on smoothing operators, we reduce our study to localised operators $Q_U$. As these choices will not influence our results, we drop the explicit mention of the localisation and simply write $Q$. A pseudodifferential operator (localised on some open subset $U$ of $M$) acting on $C^\infty(U, E)$ is called {\it classical} or {\it polyhomogeneous} if it is a~linear combination of pseudodifferential operators $Q$ whose (local) symbol $\sigma(Q)$ -- which lies in {$C^\infty(T^*U ,\operatorname{End}(V))$},
%for any $(x, \xi) \in T^* U $}
in any local trivialisation of $E$ over $U$ -- has a polyhomogeneous expansion of the form
\[\sigma(Q)\sim\sum_{j=0}^\infty \sigma_{m-j}(Q), \]
 with
$m$ in $ \C$, the order of $Q$. Explicitly, for any $N$ in $\N$, the difference
$\sigma(Q)-\sum_{j=0}^N\chi \sigma_{m-j}(Q)$ is a smooth pseudodifferential symbol of order no larger than $\operatorname{Re}(m)-N$, with $\chi$ a smooth function which vanishes in a neighborhood of zero, and $ \sigma_{\alpha}(Q)$ positively homogeneous of degree $\alpha\in \C$, that is,
\begin{equation*}
%\label{eq:homogcomp}
\sigma_{\alpha}(Q) (x,\lambda \xi)=\lambda^{\alpha} \sigma_{\alpha}(Q)(x, \xi)\end{equation*}
for any { $(x,\xi)\in T^*U $} and $\lambda>0$. For further details, we refer to classical books on the subject such as \cite{Sh}, see also \cite[Example~1.1.8]{Sc}.
We also consider the class of logarithmic pseudodifferential operators, namely those whose symbols have a log-polyhomogeneous expansion of the form
\[
\sigma(Q)(x,\xi)=m \log (|\xi|){\rm Id} +\sigma_{\rm cl}( Q)(x,\xi),
\]
where $\sigma_{\rm cl}(Q)$ is a classical symbol of nonpositive order.
We define the local {\it residue density}\footnote{$s$-densities on an $n$-dimensional real vector space $V$ are functions $\mu\colon V^n\to \R$ such that $\mu(Av_1, Av_2, \dots, A v_n)=\allowbreak \vert \det (A)\vert^s \mu(v_1, \dots, v_n)$ for any linear isomorphism $A$ of $V$ and form a one-dimensional vector space $\vert \Lambda \vert^s(V)$. An $s$-density on a manifold $M$ is a section of the $s$-density bundle $\vert \Lambda\vert^s(TM)$ over $M$ whose fibre over $x$ consists of $s$-densities on the tangent space $T_xM$. On an $n$-dimensional oriented manifold $M$, $1$-densities, also called densities, can be canonically identified with the $n$-forms on $ M$.}
\begin{equation}\label{eq:Wodzres}
 \omega_Q^{\rm Res}(x):= {\rm res}(\sigma(Q)(x,\cdot)) {\rm d}x^1\wedge\dots \wedge {\rm d}x^n ,
 \end{equation}
where $ {\rm d}x^1\wedge\dots \wedge {\rm d}x^n $ is the flat volume form in local coordinates on the (oriented) $n$-dimensional manifold $M$ and
\[ {\rm res}(\sigma(Q)(x,\cdot)):=\frac{1}{(2\pi)^n} \int_{|\xi|=1}{\rm tr}^E(\sigma_{-n}(Q) (x, \xi)) {\rm d}_S\xi,\]
where ${\rm tr}^E$ stands for the fibrewise trace on $\operatorname{End}(E)$, ${\rm d}_{S}\xi$ for the standard density on the unit sphere $\mathbb{S}^{n-1}$ obtained as the interior product of the flat volume form ${\rm d}\xi^1\wedge \dots \wedge {\rm d}\xi^n$ by the radial vector field ${\mathcal R} :=\sum_{i=1}^n \xi^i\frac{\partial}{\partial\xi^i}$, namely
\[
{\rm d}_{S}\xi:={\mathcal R}\lrcorner\bigl({\rm d}\xi^1\wedge\dots \wedge \dots \wedge {\rm d}\xi^n\bigr)= \sum_{j=1}^{n} (-1)^{j-1} \xi^j {\rm d}\xi^1\wedge\dots \wedge \widehat{{\rm d}\xi^j}\wedge \dots \wedge {\rm d}\xi^n.
\]
A priori, $\omega_Q^{\rm Res}(x),$ which is defined using a localisation of the operator $Q$ around $x$, depends on the choice of local coordinates in a neighborhood of~$x$.
M.~Wodzicki \cite{W} showed that it actually defines a global $n$-form, which can be integrated to define the linear form ${\rm Res}$ on $\Psi_{\rm cl}(M, E)$, called the Wodzicki or noncommutative residue:
\[{\rm Res}(Q):= \int_M \omega_Q^{\rm Res}(x).\]

\begin{rk}\quad
\begin{enumerate}\itemsep=0pt
\item If $(M, g)$ is an $n$-dimensional smooth manifold Riemannian manifold, we can equivalently define
\[
{\rm res}(\sigma(Q)(x,\cdot)):=\frac{1}{(2\pi)^n} \int_{S_x^*M}{\rm tr}^E(\sigma_{-n}(Q) (x, \xi)) \nu_x(\xi),
\]
as an integral over the cotangent unit sphere $S_x^* M:=\{\xi\in T_x^*M, \vert \xi\vert=1\}$ endowed with the induced Riemannian volume form $\nu_x$.
\item The Wodzicki residue easily extends to a $\Z_2$-graded vector bundle $E=E^+\oplus E^-$ replacing the fibrewise trace ${\rm tr}^E$ by a graded trace ${\rm str}^E:= {\rm tr}^{E^+}- {\rm tr}^{E^-}$, in which case we set
\[
{\rm sres}(\sigma(Q)(x,\cdot)):=\frac{1}{(2\pi)^n} \int_{|\xi|=1}{\rm str}^E(\sigma_{-n}(Q) (x, \xi)) {\rm d}_S\xi,
\]
and
\[
\omega_Q^{\rm sRes}(x):= {\rm sres}(\sigma(Q)(x,\cdot)) {\rm d}x^1\wedge\dots \wedge {\rm d}x^n.
\]
\end{enumerate}\end{rk}

\subsection{Local contractions}
Throughout the paper, $(M, g)$ denotes a smooth Riemannian manifold of dimension~$n$ and $p$ a point in $M$. The local identification uses the exponential map
\begin{equation}\label{eq:expp}
\exp_{p}\colon \ T_{p}M\supset B_r\longrightarrow U_{p}\subset M
\end{equation}
around $p$ which yields a~local diffeomorphism from a ball $B_r$ of radius $r>0$ centered at $0$ to a~local geodesic neighborhood $U_p$ of $p$.
%(This is the map $\theta$ in \cite[Section~3.1.]{DS2} and $\exp$ in \cite[formula (4.4.7)]{F}).
This exponential map is combined with a rescaling leading to the map (this is the map $\exp\circ T_\epsilon$ in \cite[formula (4.4.7)]{F})
\begin{equation*}
\exp_{p} \circ h_\lambda\colon \ T_{p}M\supset B_{r/\lambda}\longrightarrow U_{p}\subset M,
\end{equation*}
where
\begin{align*}
h_\lambda\colon \ T_{p} M\supset B_{r/\lambda} &\longrightarrow B_r\subset T_{p}M, \\
	{\mathbf x} & \longmapsto \lambda {\mathbf x}.
\end{align*}

In the sequel, we use the following notations. From a given orthonormal basis $e_1(p), \dots,\allowbreak e_n(p)$ of $T_pM$ at $p\in M$, we build:
\begin{itemize}\itemsep=0pt
	\item normal geodesic coordinates at any point $x\in U_p$, by means of the map $ {B}_r\subset\R^n\to U_p$; $\bigl(x^1,\dots,x^n\bigr)\mapsto x$
\begin{equation}\label{eq:geodesiccoord}
x=\exp_p\Bigg(\sum_{i=1}^n x^i e_i(p)\Bigg)\in {\rm exp}_p(B_r)
\end{equation}
	 defined via the local exponential map $\exp_p$ in (\ref{eq:expp});
	\item a local orthonormal frame
\begin{equation}\label{eq:Op}
O_p(x, g):=\{e_1(x, g),\dots,e_n(x, g)\},\qquad x\in U_p
\end{equation}
of $T_xM$ by the parallel transport $\tau_{c}\colon T_pM\to T_xM$ along the geodesic $c(t)= \exp_p(t {\bf x}), $ with $ {\bf x}$ in $B_r\subset T_pM,$ which takes $p$ to $x= c(1)$ so that $e_j(x, g)=\tau_{c} (e_j(p))$.
\end{itemize}

{\it Unless specified otherwise, we use normal geodesic coordinates. As usual, we identify any point $x\in U_p$ with its coordinates $X:=\bigl(x^1,\dots, x^n\bigr)$.} Let $1>\lambda>0$. By means of the map $h_\lambda$,
 we define a rescaled coordinate system
\begin{equation*}
	Y:=\bigl(y^1:= \lambda x^1, \dots, y^n:= \lambda x^n\bigr)
\end{equation*}
at any point in $U_p$. Since
we have the inclusion $ {B_r \subset B_{r/\lambda}}$, the map $h_\lambda$
 induces a diffeomorphism
 \begin{align}
 {\mathfrak f}_{p,\lambda}\colon \ U_{p} &\longrightarrow U_{p}^\lambda:=\exp_p(B_{\lambda r})\subset U_p,\nonumber\\
	 \exp_{p}({\mathbf x})&\longmapsto \exp_p\circ h_\lambda( {\mathbf x})= \exp_{p}( \lambda {\mathbf x}),\label{eq:flambdaUplambda}
\end{align}
which we shall denote by ${\mathfrak f}_\lambda$ for simplicity. As a consequence of the above constructions, we have
\begin{gather}
\mathfrak{f}_\lambda^*\biggl(\frac{\partial}{\partial x^{i}}\biggr)=\frac{\partial}{\partial y^i}=\lambda^{-1} \frac{\partial}{\partial x^{i}}\circ \mathfrak{f}_\lambda \qquad\text{and}\qquad \mathfrak{f}_\lambda^* {\rm d}x^i={\rm d}y^i=\lambda {\rm d}x^i\circ \mathfrak{f}_\lambda.\label{eq:derivativedifferent}
\end{gather}

\subsection{The behaviour of the Wodzicki residue under local contractions}
Let us now recall the general fact on pull-back of operators. Any local diffeomorphism $\mathfrak{f}\colon U\to V$ induces a local transformation on a localised pseudodifferential {operator} as follows: Given any~$Q$ in $\Psi_{\rm cl}(V,E)$, where $(E,\pi,M)$ is a vector bundle over $M$, we define $\mathfrak{f}^\sharp Q\in \Psi_{\rm cl}(U,\mathfrak{f}^*E)$ by
\begin{equation}\label{eq:flambda}
 \bigl(\mathfrak{f}^\sharp Q\bigr) s:= \mathfrak{f}^* \bigl( Q\bigl(\bigl( \mathfrak{f}^*\bigr)^{-1}(s)\bigr)\bigr)= Q\bigl(s\circ \mathfrak{f}^{-1} \bigr)\circ \mathfrak{f},
 \end{equation}
where $s$ is any local section in $\mathfrak{f}^*E$ above $U$. Here, $\mathfrak{f}^*E$ is the pull-back bundle over $U$ of the bundle $E$ given by
\[\mathfrak{f}^*E=\{(x,y)\in U\times E \mid \mathfrak{f}(x)=\pi(y)\}.\]
The following lemma is an easy consequence of the transformation property of symbols under the local diffeomorphism ${\mathfrak f}_{ \lambda}$. We nevertheless provide an explicit proof.

\begin{lem}[{compare with~\cite[p.~381]{Sc}}]
Given any $Q$ in $\Psi_{\rm cl}(M,E)$,
 we have for small enough positive $\lambda$, 	
\begin{equation}\label{eq:sigmaflambda}
\sigma\bigl( {\mathfrak f}_{ \lambda}^\sharp Q\bigr)(x,\xi)
= 	\sigma(Q)\bigl( 	{\mathfrak f}_{ \lambda}(x),\bigl( \bigl( ({\mathfrak f}_{ \lambda})_*\bigr)^t\bigr)^{-1} (\xi)\bigr)
=\sigma ({ Q})\bigl({ { {\mathfrak f}}_{ \lambda}(x)}, \bigl({\mathfrak f}_{ \lambda}^*\bigr)^t (\xi)\bigr),
\end{equation}
at any given point $x$ in $U_p$.
\end{lem}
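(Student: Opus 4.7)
The plan is to compute the symbol of $\mathfrak{f}_\lambda^\sharp Q$ directly, exploiting the key feature that in the normal geodesic coordinates of \eqref{eq:geodesiccoord}, the diffeomorphism $\mathfrak{f}_\lambda$ is the linear homothety $X\mapsto \lambda X$. Because this map is \emph{linear} rather than merely smooth, the standard transformation formula for symbols under a diffeomorphism reduces to an \emph{exact} identity with no asymptotic remainder terms (the Taylor cross-terms that normally only yield an equality modulo lower-order symbols all vanish). I would work throughout in the local trivialisation of $E$ over $U_p$ defined by the parallel-transported orthonormal frame $O_p(x,g)$ of \eqref{eq:Op}; this yields a natural identification $\mathfrak{f}_\lambda^*E\simeq E$ under which symbols can be treated as matrix-valued functions on $T^*U_p$ without any gauge correction.

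The core computation proceeds in three steps. First, starting from $(\mathfrak{f}_\lambda^\sharp Q)(s)(x)=Q\bigl(s\circ \mathfrak{f}_\lambda^{-1}\bigr)(\mathfrak{f}_\lambda(x))$, I represent $Q(u)(y)$ by the oscillatory integral
\[
Q(u)(y)=\frac{1}{(2\pi)^n}\iint e^{{\rm i}(y-z)\cdot\eta}\,\sigma(Q)(y,\eta)\,u(z)\,{\rm d}z\,{\rm d}\eta,
\]
substitute $u=s\circ \mathfrak{f}_\lambda^{-1}$ and $y=\lambda x$. Second, I change the spatial variable $z=\lambda w$, which introduces a Jacobian $\lambda^{n}$, and invoke the linearity of $\mathfrak{f}_\lambda$ to rewrite the phase as $\lambda(x-w)\cdot\eta$. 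Third, the frequency substitution $\xi=\lambda\eta$ contributes a factor $\lambda^{-n}$ that cancels the spatial Jacobian exactly. The resulting expression is a standard pseudodifferential kernel with symbol $\sigma(Q)(\lambda x,\xi/\lambda)$, giving $\sigma(\mathfrak{f}_\lambda^\sharp Q)(x,\xi)=\sigma(Q)(\mathfrak{f}_\lambda(x),\xi/\lambda)$ for all sufficiently small $\lambda>0$ so that the integrals make sense on $U_p^\lambda$.

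To conclude with the intrinsic expressions, I match $\xi/\lambda$ with both forms on the right-hand side using the identifications recalled in \eqref{eq:derivativedifferent}. Since $(\mathfrak{f}_\lambda)_*$ acts on coordinate tangent vectors by multiplication by $\lambda$, its transpose $\bigl((\mathfrak{f}_\lambda)_*\bigr)^t$ on cotangent vectors is also multiplication by $\lambda$, so its inverse produces $\xi/\lambda$; and the corresponding formula with $\bigl(\mathfrak{f}_\lambda^*\bigr)^t$ follows from the paper's convention in \eqref{eq:derivativedifferent}, under which $\mathfrak{f}_\lambda^*$ on tangent vectors is multiplication by $\lambda^{-1}$. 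The main obstacle in writing this up is purely notational: keeping careful track of which side of the identification push-forward versus pull-back acts on, and checking that the parallel-transport trivialisation genuinely eliminates bundle-valued corrections so that the scalar computation carries over. The analytic content is otherwise routine, precisely because the homothety nature of $\mathfrak{f}_\lambda$ collapses the usual asymptotic expansion to its leading term.
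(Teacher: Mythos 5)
Your argument is correct, and it reaches the conclusion by a genuinely different (and more self-contained) route than the paper. The paper cites Shubin's general change-of-variables formula for symbols, which expresses $\sigma(\mathfrak{f}^\sharp Q)$ as $\sigma(Q)(\mathfrak{f}(x),((\mathfrak{f}_*)^t)^{-1}\xi)$ plus explicit lower-order terms involving the functions $\Phi_\alpha$, and then observes that for $\mathfrak{f}=\mathfrak{f}_\lambda$ the second derivatives $\partial_i\partial_j\mathfrak{f}_\lambda$ vanish identically, killing every correction term. You instead bypass the asymptotic machinery entirely and compute $\mathfrak{f}_\lambda^\sharp Q$ directly from the oscillatory-integral kernel, using the substitutions $z=\lambda w$ and $\xi=\lambda\eta$ whose Jacobians cancel; the linearity of $\mathfrak{f}_\lambda$ in normal coordinates makes the phase transform exactly, yielding $\sigma(\mathfrak{f}_\lambda^\sharp Q)(x,\xi)=\sigma(Q)(\lambda x,\xi/\lambda)$ with no remainder and no appeal to an external reference. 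The two proofs exploit the same underlying fact (linearity of the homothety in exponential coordinates collapses the usual asymptotic expansion to its leading term), but yours buys a from-scratch derivation at the modest cost of manipulating oscillatory integrals, while the paper's is shorter because it leans on a quoted formula. Your identification of $\xi/\lambda$ with both intrinsic expressions via the paper's convention \eqref{eq:derivativedifferent} is also correct, and your remark that the parallel-transport trivialisation handles the bundle-valued case is the right thing to say, though it could be made a bit more explicit (the matrix-valued oscillatory integral computation goes through verbatim in that frame).
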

\begin{proof} A local diffeomorphism $\mathfrak{f}\colon U\to V$ between two open subsets $U$ and $V$ of $M$, induces a map $ ({\mathfrak f}^*)^t_x\colon T_x^*U\to T_{\mathfrak{f}(x)}^*V$ and the symbol $\sigma(Q)$ of $Q$ transforms as (see, e.g., \cite[equation~(4.2.1)]{Sh})
\begin{equation}\label{eq:sigmafsharp}
\sigma\bigl({\mathfrak{f}}^\sharp Q\bigr)(x, \xi)= \sigma(Q)\bigl(\mathfrak{f}(x), \bigl( ({\mathfrak f}_{ *})^t\bigr)^{-1}(\xi)\bigr)+ \text{lower order terms}.
\end{equation}
Here ``lower order terms'' stands for the push forward by $\mathfrak{f}$ of the sum  $\sum_{\vert \alpha\vert >0}
\frac{1}{\alpha!} \Phi_\alpha(x, \xi)
\partial_\xi^\alpha \sigma\left(x, \xi \right) $ with
\[
\Phi_\alpha (x, \xi):=
 D_z^\alpha {\rm e}^{{\rm i}\left\langle \varphi_x^\mathfrak{f}(z), \xi\right\rangle }\big\vert_{z=x} \qquad {\rm and}\qquad \varphi_x^\mathfrak{f}(z):=\mathfrak{f}(z)-\mathfrak{f}(x)-{\rm d}\mathfrak{f}(x)(z-x),
 \]
 which is a polynomial in $\xi$ of degree
 $\leq \frac{\vert \alpha\vert}{2}$
 whose coefficients are linear combinations of products of derivatives
 $ \prod_\gamma \partial^\gamma {\mathfrak f}(x)$ of $\mathfrak{f}$ at $x$ with $\sum\vert \gamma\vert=\vert\alpha\vert$ and $\vert\gamma\vert\geq 2$.
 For {$\mathfrak{f}=\mathfrak f_\lambda$ with $U=U_p$ and $V=U_p^\lambda$}, we have $\partial_i\partial_j \mathfrak{f}\equiv 0$ for any indices $i$, $j$ running from $1$ to $n$, so that the lower order terms vanish leading to (\ref{eq:sigmaflambda}).
%(this compares with	\cite[equation~(3.5.3.29)]{Sc})
 \end{proof}

\begin{prop} \label{prop:flambdasharpA} Let $E$ be a vector bundle over $M$ of finite rank. For any given $Q$ in $\Psi_{\rm cl}(M,E)$, the $n$-form $\omega^{\rm Res}_Q $ transforms covariantly under contractions ${\mathfrak f}_\lambda$ as
\begin{equation*}
\omega_Q^{\rm Res}\circ{\mathfrak f}_\lambda =\omega^{\rm Res}_{{\mathfrak f}_\lambda^\sharp Q},
\end{equation*}
for any $\lambda>0$ small enough.
\end{prop}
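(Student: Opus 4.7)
The plan is to expand both sides of the identity in normal geodesic coordinates around $p$ and combine the two ingredients already at hand: the symbol pullback formula \eqref{eq:sigmaflambda} established in the preceding lemma, and the rescaling identity \eqref{eq:derivativedifferent} for coordinate differentials. I would read $\omega_Q^{\rm Res}\circ{\mathfrak f}_\lambda$ as the pullback $\mathfrak{f}_\lambda^*\omega_Q^{\rm Res}$; this is the natural interpretation that makes both sides $n$-forms at the same point $x\in U_p$.

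First I would extract the $(-n)$-homogeneous component in \eqref{eq:sigmaflambda} to get
\[\sigma_{-n}\bigl(\mathfrak{f}_\lambda^\sharp Q\bigr)(x,\xi)=\sigma_{-n}(Q)\bigl(\mathfrak{f}_\lambda(x),\bigl(\mathfrak{f}_\lambda^*\bigr)^t(\xi)\bigr).\]
In normal coordinates, \eqref{eq:derivativedifferent} shows that $\mathfrak{f}_\lambda$ rescales coordinate basis vectors by $\lambda$, so the map appearing in the fibre variable on the right-hand side reads $\xi\mapsto\lambda^{-1}\xi$. Positive $(-n)$-homogeneity of $\sigma_{-n}(Q)$ then yields
\[\sigma_{-n}\bigl(\mathfrak{f}_\lambda^\sharp Q\bigr)(x,\xi)=\lambda^{n}\,\sigma_{-n}(Q)(\mathfrak{f}_\lambda(x),\xi).\]
Integrating over $\{|\xi|=1\}$ against ${\rm d}_S\xi$ and taking the fibrewise trace pulls the scalar factor $\lambda^{n}$ out of the integral, so that
\[\omega^{\rm Res}_{\mathfrak{f}_\lambda^\sharp Q}(x)=\lambda^{n}\,{\rm res}\bigl(\sigma(Q)(\mathfrak{f}_\lambda(x),\cdot)\bigr)\,{\rm d}x^1\wedge\cdots\wedge{\rm d}x^n.\]

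On the other hand, \eqref{eq:derivativedifferent} gives $\mathfrak{f}_\lambda^*\bigl({\rm d}x^1\wedge\cdots\wedge{\rm d}x^n\bigr)=\lambda^{n}\,{\rm d}x^1\wedge\cdots\wedge{\rm d}x^n$, whence
\[\mathfrak{f}_\lambda^*\omega_Q^{\rm Res}(x)={\rm res}\bigl(\sigma(Q)(\mathfrak{f}_\lambda(x),\cdot)\bigr)\,\mathfrak{f}_\lambda^*\bigl({\rm d}x^1\wedge\cdots\wedge{\rm d}x^n\bigr)=\lambda^{n}\,{\rm res}\bigl(\sigma(Q)(\mathfrak{f}_\lambda(x),\cdot)\bigr)\,{\rm d}x^1\wedge\cdots\wedge{\rm d}x^n,\]
matching the previous expression and establishing the claim. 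The only point demanding care is the bookkeeping: the $\lambda^{-1}$ rescaling of the cotangent variable introduced through the symbol pullback is exactly compensated by the $\lambda^{n}$ Jacobian of the coordinate volume form, precisely because the homogeneity degree $-n$ of the residue component of the symbol matches the dimension of the base. This cancellation is a very concrete instance of the general diffeomorphism covariance of the Wodzicki residue density, here trivialised by the fact that $\mathfrak{f}_\lambda$ is affine in normal geodesic coordinates.
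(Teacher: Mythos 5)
Your proof is correct and follows essentially the same route as the paper's: both invoke the symbol pullback formula \eqref{eq:sigmaflambda}, the $(-n)$-homogeneity of the residue component, and the rescaling \eqref{eq:derivativedifferent} to match the $\lambda^{-1}$ factor in the cotangent variable against the $\lambda^n$ Jacobian of the volume form. The only cosmetic difference is that you reduce both sides independently to the common expression $\lambda^n\,{\rm res}(\sigma(Q)(\mathfrak{f}_\lambda(x),\cdot))\,{\rm d}x^1\wedge\cdots\wedge{\rm d}x^n$, whereas the paper transforms the left-hand side into the right-hand side in a single chain of equalities.
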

 \begin{proof} Applying the local residue density \eqref{eq:Wodzres} at the point $\mathfrak{f}_\lambda(x)$ {with $x\in U_p$}, we have
\begin{align*} 	
(2\pi)^n\omega_Q^{{ {\rm Res}}}({\mathfrak f}_{ \lambda}(x))
&= \biggl( \int_{\vert \xi\vert=1} {\rm tr}^E(\sigma_{ {-n}}({ Q})({ { {\mathfrak f}}_{ \lambda}(x)}, \xi)) {\rm d}_S\xi\biggr) {\rm d}{ y^1}\wedge \dots \wedge {\rm d}{y^n} \\
&= \biggl( \int_{\vert \xi\vert=1}{\rm tr}^E\bigl( \sigma_{ {-n}}({Q})\bigl({ { {\mathfrak f}}_{ \lambda}(x)},{ \lambda}^{ -1} \xi\bigr)\bigr) {\rm d}_S\xi\biggr) {\rm d}{ x}^1\wedge \dots \wedge {\rm d}{x}^n \\
&= \biggl( \int_{\vert \xi\vert=1} {\rm tr}^E\bigl(\sigma_{ {-n}}({ Q})\bigl({ { {\mathfrak f}}_{ \lambda}(x)}, ({\mathfrak f}_{ \lambda}^*)^t (\xi)\bigr)\bigr) {\rm d}_S\xi\biggr) {\rm d}{ x}^1\wedge \dots \wedge {\rm d}{x}^n \\
&\bui{=}{\eqref{eq:sigmaflambda}} (2\pi)^n \omega^{ {\rm Res}}_{{ {\mathfrak f}}_{ \lambda}^\sharp (Q)}(x).
\end{align*}
This finishes the proof of the lemma.
\end{proof}

\subsection{Logarithmic residue density} \label{sec:logoperator}
For later purposes, we review here how the Wodzicki residue can be extended to logarithms of pseudodifferential operators as defined in Appendix~\ref{appendix2}.
As before, we consider a vector bundle~$E$ over~$M$ of finite rank. We say that an operator $Q$ in $\Psi_{\rm cl}(M, E)$ of positive real\footnote{The order is assumed to be real so as to ensure that does not intersect all rays. } order~$m$ has a~principal angle $\theta\in [0,2\pi)$ (see \cite[Section~1.5.7.1]{Sc}) if the leading symbol matrix $\sigma_L(Q) (x, \xi):=\sigma_m(Q)(x,\xi)$ has no eigenvalue on the ray $L_\theta:= \{ r {\rm e}^{{\rm i}\theta} ,\, r\geq 0\}$ for every $(x, \xi)\in T^*U\setminus U\times \{0\}$.
	 In particular, the operator is elliptic and, therefore, has a purely discrete spectrum. A principal angle $\theta$ of an operator $Q$ is said to be an Agmon angle\footnote{One can actually build an Agmon angle from a small perturbation of any principal angle.} if there exists a solid angle of the ray
\[
\Lambda_{\varepsilon,\theta}=\big\{r{\rm e}^{{\rm i}\alpha},\, r\geq 0, \,\theta-\varepsilon\leq \alpha\leq \theta+\varepsilon\big\},
\]
	 for some $\varepsilon>0$, that contains no eigenvalue of $Q$.
	 In that case, the operator $Q$ is invertible.

\begin{rk} We shall drop the explicit mention of the principal angle when we can choose $\theta=\pi$.\end{rk}
For such an operator $Q$, we can define the complex power $Q_\theta^z$ for $z\in \C$ and the logarithm~$\log_\theta(Q)$ as in Appendix~\ref{appendix2}. It is ``nearly'' classical in so far as its local symbol differs from a~classical symbol by a logarithm term. Indeed, it is shown in \cite[formula~(2.6.1.11)]{Sc} that the symbol of the logarithm reads
\begin{equation}
\label{eq:sigmmalog}
\sigma(\log _\theta (Q))(x,\xi)=m \log (|\xi|){\rm Id} +\sigma_{\rm cl}(\log _\theta (Q))(x,\xi),
\end{equation}
where $\sigma_{\rm cl}(\log _\theta(Q))$ is a classical symbol of order zero with homogeneous components \linebreak $\sigma_{-j}(\log _\theta(Q))$ of degree $-j, j\geq 0$ given by (this follows from the formula above (2.6.1.11) on p.~219 in \cite{Sc})
\begin{equation*}%\label{eq:sigmajlog}
\sigma_{-j}(\log _\theta (Q))(x,\xi)=|\xi|^{-j}\biggl(\partial_z\biggl(\sigma_{mz-j}(Q^z_\theta)\biggl(x,\frac{\xi}{|\xi|}\biggr)\biggr)\biggr)_{z=0}.
\end{equation*}

The fact that the logarithmic part of the symbol vanishes on the cotangent unit sphere underlies the extendibility of the Wodzicki residue to logarithmic pseudodifferential operators (for a detailed discussion, we refer the reader to \cite[Section~2.7.1]{Sc}). In analogy with (\ref{eq:Wodzres}), we~set
\begin{gather}
{\rm res}(\sigma(\log_\theta (Q))(x,\cdot)) := \frac{1}{(2\pi)^n} \int_{S_x^*M}{\rm tr}^E(\sigma_{-n}(\log_\theta (Q)) (x, \xi)) {\rm d}_S\xi, \nonumber\\
\omega_{\log _\theta (Q)}^{\rm Res}(x) := {\rm res}(\sigma(\log_\theta (Q))(x,\cdot)) {\rm d}x^1\wedge \dots \wedge {\rm d}x^n,\label{eq:reslog}
\end{gather}
which we call the {\it logarithmic residue density of $P$}. Given a local diffeomorphism $\mathfrak{f}\colon U\to V$ and an operator $Q\in\Psi_{\rm cl}(M, E)$ with Agmon angle $\theta$, the operator $\mathfrak{f}^\sharp Q$ defined in \eqref{eq:flambda} lies in $\Psi_{\rm cl}(M, \mathfrak{f}^*E)$ with the same Agmon angle $\theta$, since equation \eqref{eq:sigmafsharp} implies that
\[
\sigma_L\bigl({\mathfrak{f}}^\sharp Q\bigr)(x, \xi)= \sigma_L(Q)\bigl(\mathfrak{f}(x), (\mathfrak{f}^*)^t (\xi)\bigr).
\]
Furthermore, the relation $\mathfrak{f}^\sharp(Q-\lambda)^{-1}=\bigl(\mathfrak{f}^\sharp Q-\lambda\bigr)^{-1}$ gives that
{\begin{equation}\label{eq:fQz}
\mathfrak{f}^\sharp(Q_\theta^z)= \bigl(\mathfrak{f}^\sharp Q_\theta\bigr)^z
\end{equation}
for any complex number $z$ with negative real part. Since $\mathfrak{f}^\sharp(Q^k)= \bigl(\mathfrak{f}^\sharp Q\bigr)^k$ for any positive integer~$k$, it follows from the construction of the extension $Q_\theta^z$ to any complex number $z$, that Property~(\ref{eq:fQz}) extends to $z\in \C$. Similarly, one shows that
\begin{equation*}
%\label{eq:flogQ}
\mathfrak{f}^\sharp(\log _\theta (Q))=\log _\theta\bigl( \mathfrak{f}^\sharp Q\bigr),
\end{equation*}
in other words, $\mathfrak{f}^\sharp$ commutes with the functional calculus.} On the grounds of formula (\ref{eq:sigmmalog}), $\sigma_{-n}(\log _\theta (Q))) = (\sigma_{\rm cl})_{-n}(\log _\theta (Q)))$ so that one can easily adapt the proof of Proposition~\ref{prop:flambdasharpA} to show the covariance of the logarithmic residue:
\begin{equation}\label{eq:fReslog}
\omega_{\log_\theta (Q)}^{\rm Res}\circ{\mathfrak f}_\lambda =\omega^{\rm Res}_{ \log_\theta ({\mathfrak f}_\lambda^\sharp Q)},
\end{equation}
where we have used the fact that $\mathfrak{f}_\lambda^\sharp$ and $\log _\theta$ commute.
\subsection{A localisation formula for the logarithmic residue density} We now focus on logarithms of scalar differential operators, for which we prove a localisation formula for the Wodzicki residue density. In the sequel, we use the following notations. For any multiindex $\gamma=\{i_1,\dots,i_s\}$, we set
\begin{equation} \label{eq:dgammax}
D_X^\gamma:=\frac{\partial}{\partial x^{i_1}}\cdots\frac{\partial}{\partial x^{i_s}},
\end{equation}
in the local normal geodesic coordinates $X=\bigl(x^1,\dots,x^n\bigr)$ at point $x$ with the usual identification $x\leftrightarrow X$. {\it To simplify notations, unless this gives rise to an ambiguity, we henceforth write $D^\gamma$ instead of $D_X^\gamma$.}
	
Given a vector bundle $E\to M$ of rank $k$, trivialised over an open subset $U$ of $M$, $D^\gamma$ acts on a local section $s|_U=\sum_{i=1}^k \alpha_i s_i\big|_U$ on $U$ by
\begin{equation}\label{eq:Ds}
D^\gamma s:=\sum_{i=1}^k D^\gamma(\alpha_i) s_i.
\end{equation}
Here $\{s_i\}_{i=1,\dots,k}$ is a basis of the bundle $E|_U$ in the local trivialisation $E|_U\simeq U\times \R^k$. A~differential operator of order $m\in \Z_{\geq 0}$ reads $P= \sum_{\vert \gamma\vert \leq m} P_\gamma D^\gamma$, which means that in the local trivialisation $E\vert_U\simeq U\times \R^k$ of $E$, it acts as
 \begin{equation}\label{eq:PgLambdaT*Mcoord}
 P\Bigg(\sum_{j=1}^k \alpha_j s_j\Bigg)=\sum_{\vert \gamma\vert \leq m}\sum_{i, j=1}^k (P_\gamma)_{ij} D^\gamma (\alpha_j) s_i,
 \end{equation}
 where we have used equation~\eqref{eq:Ds}. Differential operators form an algebra ${\rm Diff}(M, E)$ and we have the following isomorphism of $C^\infty$-modules:
\begin{equation*}%\label{eq:DiffEmod}
{\rm Diff} (M, E)\simeq {\rm Diff} (M) \otimes_{C^\infty(M)} C^\infty(M, \operatorname{End}(E)),\end{equation*}
where we have set ${\rm Diff}(M):= {\rm Diff}(M, M\times \R)$.
Following \cite{vEY}, we define a family of rescaled differential operators for any $P\in {\rm Diff}(M, M\times\mathbb{R})$ by
\begin{equation}\label{eq:PPlambda}
\Pp :=\lambda^m P, \qquad \lambda>0,
\end{equation}
and set for any small positive $\lambda$
\begin{equation}\label{eq:PPlambdaflambda}
\widetilde \Pp_\lambda:=\lambda^m\mathfrak{f}_\lambda^\sharp P.
\end{equation}

In local normal geodesic coordinates, we have ${\mathfrak f}_\lambda^\sharp D^\gamma= \lambda^{-\vert \gamma\vert} D^\gamma$ so that the family of rescaled operators built from a differential operator $P =\sum_{\vert\gamma\vert\leq m}P_\gamma D^\gamma$,
 locally reads (these and the above notations $\Pp_\lambda$ are borrowed from \cite{vEY})
\[
\widetilde \Pp_\lambda=\sum_{ |\gamma|\leq m } \lambda^{m-|\gamma|} (P_{\gamma}\circ \mathfrak{f}_\lambda) D^{\gamma}.
\]
As $\lambda$ tends to zero, $\widetilde \Pp_\lambda$ converges to the operator $P$ evaluated at the limit point $p$
\begin{equation}\label{eq:limflambdaPlambda}
\underset{\lambda \to 0}{\lim} \widetilde \Pp_\lambda\big\vert_{U_p }=\sum_{\vert \gamma\vert=m}P_\gamma(p) D^\gamma\vert_p=P|_p,
\end{equation}
where $P_{\gamma}(p)$ corresponds to $P_{\gamma}(x)$ evaluated at the reference point $p$.
In the following, we state a~localisation formula for the residue of the logarithm of a differential operator (see \cite[formula~(3.5.3.33), p.~382]{Sc} for a similar formula).

\begin{prop}\label{prop:reslim} For any differential operator $P$ in $ {\rm Diff}(M, M\times\mathbb{R})$ with Agmon angle $\theta$, we have the following localisation formula:
	\[	 \omega_{\log_\theta (P)}^{{ {\rm Res}}}(p)= \omega_{\log_\theta (P|_p)}^{{ {\rm Res}}}(x)\]
for all $x\in U_p$.
\end{prop}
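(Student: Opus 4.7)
The plan is to combine the covariance property \eqref{eq:fReslog} of the logarithmic residue density under the contractions $\mathfrak{f}_\lambda$ with the convergence \eqref{eq:limflambdaPlambda} of the rescaled operators $\widetilde\Pp_\lambda$ to $P|_p$, and then to pass to the limit as $\lambda\to 0$. First, I would apply \eqref{eq:fReslog} to $Q=P$ to obtain
\[
\omega^{{\rm Res}}_{\log_\theta(P)}(\mathfrak{f}_\lambda(x)) = \omega^{{\rm Res}}_{\log_\theta(\mathfrak{f}_\lambda^\sharp P)}(x)
\]
for any $x\in U_p$ and any sufficiently small $\lambda>0$. Since $\widetilde\Pp_\lambda=\lambda^m\mathfrak{f}_\lambda^\sharp P$ and $\log_\theta(\lambda^m Q)=\log_\theta(Q)+m\log(\lambda)\,{\rm Id}$, and since the extra scalar term $m\log(\lambda)\,{\rm Id}$ has constant symbol and hence no $(-n)$-homogeneous component, it does not affect the residue density. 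Consequently
\[
\omega^{{\rm Res}}_{\log_\theta(\mathfrak{f}_\lambda^\sharp P)}(x)=\omega^{{\rm Res}}_{\log_\theta(\widetilde\Pp_\lambda)}(x),
\]
and combining the two identities yields $\omega^{{\rm Res}}_{\log_\theta(P)}(\mathfrak{f}_\lambda(x)) = \omega^{{\rm Res}}_{\log_\theta(\widetilde\Pp_\lambda)}(x)$.

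Next, I would take the limit $\lambda\to 0$. On the left-hand side, $\mathfrak{f}_\lambda(x)=\exp_p(\lambda\mathbf{x})\to p$ and $\omega^{{\rm Res}}_{\log_\theta(P)}$ is a smooth density on $U_p$, so by continuity the left-hand side converges to $\omega^{{\rm Res}}_{\log_\theta(P)}(p)$. On the right-hand side, the total symbol
\[
\sigma(\widetilde\Pp_\lambda)(x,\xi)=\sum_{|\gamma|\leq m}\lambda^{m-|\gamma|}P_\gamma(\mathfrak{f}_\lambda(x))({\rm i}\xi)^\gamma
\]
converges smoothly on $T^*U_p$, as $\lambda\to 0$, to the constant-coefficient symbol $\sigma(P|_p)(x,\xi)=\sum_{|\gamma|=m}P_\gamma(p)({\rm i}\xi)^\gamma$, with the Agmon angle $\theta$ preserved uniformly for small $\lambda$. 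Using the Cauchy integral representation of complex powers together with the Seeley recursion for the homogeneous components of the resolvent symbol, one deduces that $\sigma_{-n}(\log_\theta(\widetilde\Pp_\lambda))(x,\xi)\to\sigma_{-n}(\log_\theta(P|_p))(x,\xi)$ pointwise in $x$ and uniformly in $\xi$ on the cotangent unit sphere. Integrating over $|\xi|=1$ gives $\omega^{{\rm Res}}_{\log_\theta(\widetilde\Pp_\lambda)}(x)\to\omega^{{\rm Res}}_{\log_\theta(P|_p)}(x)$, and the asserted localisation formula follows.

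The main technical obstacle is the last continuity step: one has to choose a contour $\Gamma_\theta$ avoiding the spectra of all $\widetilde\Pp_\lambda$ for $\lambda$ in some neighbourhood of $0$ as well as the spectrum of $P|_p$, and to control the convergence of each term in the Seeley expansion of the resolvent symbol uniformly on this contour. Once this continuous dependence of $\omega^{{\rm Res}}_{\log_\theta(\cdot)}$ on the operator is established, the argument reduces to the algebraic manipulation carried out in the first paragraph.
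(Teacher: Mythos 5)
Your argument is correct and follows essentially the same route as the paper's proof: apply the covariance \eqref{eq:fReslog}, absorb the scalar factor $\lambda^m$ into $\log_\theta$ using $\log_\theta(\lambda^m Q)=m\log\lambda\,\mathrm{Id}+\log_\theta Q$ together with the vanishing of the residue density on scalar terms, and pass to the limit $\lambda\to 0$ invoking \eqref{eq:limflambdaPlambda} and continuity of the logarithmic residue. The paper condenses your final technical step into one sentence appealing to continuity of $\log_\theta$ and of the Wodzicki residue in the Fréchet topology of log-classical operators of fixed order, whereas you sketch the underlying contour-and-Seeley-expansion argument, but this is the same point spelled out in more detail.
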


\begin{proof} We first observe that for small positive $\lambda$
\[
\log_\theta \bigl(\widetilde \Pp_\lambda\bigr)= \log_\theta \bigl(\lambda^m \mathfrak{f}_\lambda^\sharp P\bigr) =(m \log\lambda) {\rm Id}+ \log_\theta\bigl({\mathfrak f}_\lambda^\sharp P\bigr),
\]
where $m$ is the order of $P$. Since the residue density vanishes on differential operators and hence on ${\rm Id}$, we have \smash{$\omega^{\rm Res}_{\log_\theta (\widetilde \Pp_\lambda)} =\omega^{\rm Res}_{ \log _\theta({\mathfrak f}_\lambda^\sharp P)}$}. Equation~(\ref{eq:fReslog}) implies that $\omega^{\rm Res}_{\log_\theta (\widetilde \Pp_\lambda)}=\omega^{\rm Res}_{\log_\theta (P)}\circ \mathfrak{f}_\lambda$. We then take the limit as $\lambda\to 0$, by which $\widetilde \Pp_\lambda$ tends to $P\vert_p$ by \eqref{eq:limflambdaPlambda}. The continuity of the logarithm combined with the continuity of the Wodzicki residue for the Fr\'echet topology of (log-)classical operators of constant order then yields the statement of the proposition.
\end{proof}

\section[A local Berezin type n-form on Psi\_\{cl\} (M, Lambda T\^{}*M)]{A local Berezin type $\boldsymbol{n}$-form on $\boldsymbol{\Psi_{\rm cl} (M, \Lambda T^*M)}$}%\label{sec:localform}

In this section, we define a local $n$-form $\widetilde \omega^{\rm Res}$ on $\Psi_{\rm cl} (M, \Lambda T^*M)$ (see equation~\eqref{eq:omegatildeRes}), which unlike the Wodzicki density, is not covariant under contractions defined in the previous section. We give in Proposition~\ref{prop:pullbacklocalform} the behaviour of this local $n$-form $\widetilde \omega^{\rm Res}$ under Getzler rescaling map
 (see Definition~\ref{getzlerrescaling}) combined with the local contractions. When the manifold $M$ is spin and for a differential operator $P$ acting on smooth sections of its spinor bundle, we use the expression of the super trace in terms of a Berezin integral (see \eqref{eq:supertraceformula}) to relate the local $n$-form $\widetilde \omega_{\log_\theta (c^g(P))}^{\rm Res}$ of the logarithm (with spectral cut $\theta$) of $c^g(P)$ (defined in equation~\eqref{eq:cgp}) to its (super-)Wodzicki residue $ \omega_{\log_\theta (c^g(P))}^{\rm sRes}$. Much of this section is inspired from Simon Scott's approach to the local Atiyah--Singer index theorem by means of the Wodzicki residue \cite[Section~3.5.3]{Sc}.

\subsection{The Getzler rescaling map}

To simplify the notation, we set $ \otimes_r^q V:=V^{\otimes q}\otimes (V^*)^{\otimes r}$.
\begin{defn} \label{getzlerrescaling}
The Getzler rescaling map is the tensor bundle morphism defined for any $\lambda>0$, by
\begin{align}
U_\lambda\colon \otimes_r^q V &\longrightarrow \otimes_r^q V, \nonumber\\
\hphantom{U_\lambda\colon}{} \ t &\longmapsto 	 \lambda^{ q-r} t.\label{eq:getzlerrescaling}
\end{align}
\end{defn}
The Getzler rescaling map $U_\lambda$ restricted to $\Lambda V$ induces a map
\begin{align}
U_\lambda^\sharp\colon \ \operatorname{End}(\Lambda V) & \longrightarrow \operatorname{End}(\Lambda V), \nonumber\\
Q \longmapsto U_\lambda^\sharp Q\colon \ \omega & \mapsto U_\lambda Q U_\lambda^{-1}\omega,\label{eq:actionulambda}
\end{align}
which satisfies
\begin{equation}\label{eq:ulambdaformes}
U_\lambda^\sharp(v\wedge\bullet)= \lambda^{-1}v\wedge\bullet \qquad\text{and}\qquad U_\lambda^{\sharp}\bigl(v^{\sharp_g}\lrcorner\bullet\bigr)= \lambda v^{\sharp_g}\lrcorner\bullet.
\end{equation}
for any $v\in V^*$. Combining (\ref{eq:ulambdaformes}) with the Clifford map $ c^g\colon \Cl(V) \longrightarrow \operatorname{End}(\Lambda V)$ defined in (\ref{eq:cliffend}) Appendix~\ref{sec:appendix3}, on the covector $v\in V^*$ by $ c^g(v)\bullet =v\wedge\bullet -v^{\sharp_g} \lrcorner\bullet$
 yields the map
 \begin{equation*}
 %\label{eq:ulambdacg}
 U_\lambda^\sharp\circ c^g\colon \ \Cl(V) \longrightarrow \operatorname{End}(\Lambda V)
 \end{equation*}
 given by
	\[ \bigl(U_\lambda^\sharp\circ c^g\bigr)(v)\bullet=\lambda^{-1} v\wedge\bullet-\lambda v^{\sharp_g}\lrcorner\bullet.\]
We have the following straightforward lemma that we will use later.

\begin{lem} Let ${\rm e}^I:={\rm e}^{i_1}\cdot_g {\rm e}^{i_2}\dots \cdot_g {\rm e}^{i_k}$ for $i_1<i_2<\dots<i_k$ with {$|I|= k$}, it follows that
\begin{equation}\label{eq:limUlambdac}
\underset{\lambda\to 0}{\lim} \lambda^{|I|}\bigl(U_\lambda^\sharp\circ c^g\bigr)({\rm e}^I)= {\rm e}^I\wedge,
\end{equation}
 where ${\rm e}^I\wedge:={\rm e}^{i_1}\wedge\dots \wedge {\rm e}^{i_k}$.
 \end{lem}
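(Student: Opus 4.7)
The plan is to exploit that both $U_\lambda^\sharp$ and $c^g$ are algebra homomorphisms, so that the Clifford product ${\rm e}^I = {\rm e}^{i_1}\cdot_g\cdots\cdot_g{\rm e}^{i_k}$ is sent to the composition
\[
(U_\lambda^\sharp\circ c^g)({\rm e}^I)=\prod_{j=1}^{k}(U_\lambda^\sharp\circ c^g)({\rm e}^{i_j}),
\]
where each factor acts on $\Lambda V$ as $\lambda^{-1}{\rm e}^{i_j}\wedge\bullet-\lambda\,({\rm e}^{i_j})^{\sharp_g}\lrcorner\bullet$, by the formula displayed immediately before the lemma. That $c^g$ is an algebra map is part of its definition as a Clifford representation; that $U_\lambda^\sharp$ is an algebra map follows from its definition \eqref{eq:actionulambda} as conjugation by $U_\lambda$.

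Next I would expand the product of the $k$ factors into a sum of $2^k$ monomial operators, each of which is a composition of exactly $j$ wedge operators ${\rm e}^{i_s}\wedge$ and $k-j$ contraction operators $({\rm e}^{i_t})^{\sharp_g}\lrcorner$ in the original order $i_1,\dots,i_k$. A term with $j$ wedges and $k-j$ contractions carries the scalar weight $\lambda^{-j}\lambda^{k-j}=\lambda^{k-2j}$, so after multiplication by $\lambda^{|I|}=\lambda^{k}$ its weight is $\lambda^{2(k-j)}$. Hence as $\lambda\to0$ the only surviving term is the one with $j=k$ (no contractions), weighted by $\lambda^{0}=1$.

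Finally I would identify this surviving term. Since the indices $i_1<\cdots<i_k$ are distinct, the composition ${\rm e}^{i_1}\wedge({\rm e}^{i_2}\wedge(\cdots({\rm e}^{i_k}\wedge\bullet)\cdots))$ equals left-multiplication by ${\rm e}^{i_1}\wedge\cdots\wedge{\rm e}^{i_k}={\rm e}^I\wedge$ by associativity of the exterior product, yielding the claimed limit \eqref{eq:limUlambdac}.

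The argument is essentially bookkeeping; the only point that requires care is ensuring that the non-wedge monomials really do carry at least one extra factor of $\lambda^{2}$, which is visible from the $\lambda^{k-2j}$ count above. No analytic subtlety arises because we are working pointwise in finite-dimensional endomorphism algebras.
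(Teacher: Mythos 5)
Your proof is correct, and it is exactly the argument the paper evidently has in mind: the lemma is stated without proof (as "straightforward") immediately after the displayed formula $(U_\lambda^\sharp\circ c^g)(v)\bullet=\lambda^{-1}v\wedge\bullet-\lambda\,v^{\sharp_g}\lrcorner\bullet$, and your expansion using multiplicativity of $U_\lambda^\sharp$ and $c^g$, followed by the $\lambda^{k-2j}$ weight count, is precisely the bookkeeping that verifies it.
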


\subsection[A local n-form on Psi\_\{cl\} (M, Lambda T\^{}*M) and Getzler rescaling]{A local $\boldsymbol{n}$-form on $\boldsymbol{\Psi_{\rm cl} (M, \Lambda T^*M)}$ and Getzler rescaling}
In order to define the local $n$-form, we fix a normal geodesic neighborhood $U_p$ around a point~$p$ in $M$. For $(x, \xi)\in T^*U_p$, we consider the symbol $\sigma(Q)(x, \xi) \in \operatorname{End}(\Lambda T_x^*U_p)$ of an operator $Q$ in~$\Psi_{\rm cl}(M, \Lambda T^*M)$ in the corresponding coordinate chart. Its homogeneous component $\sigma_{-n}(Q)(x, \xi)$ of degree $-n$ evaluated in ${\bf 1}_x$ yields a differential form
$\sigma_{-n}(Q)(x, \xi) {\bf 1}_x \in \Lambda T_x^*U_p$. Hence we define
\begin{equation}\label{eq:omegatildeRes}
\widetilde \omega^{\rm Res}_Q(x):= \frac{1}{(2\pi)^n} \int_{|\xi|=1} [ \sigma_{-n} (Q) (x, \xi) {\bf 1}_x ]_{[n]} {\rm d}_S\xi,
\end{equation}
where $\alpha_{[n]}$ stands for the part of degree $n$ of a form $\alpha$ in $\Lambda T_x^*M$.

\begin{rk}\quad
\begin{itemize}\itemsep=0pt
\item Note that this differs from the Wodzicki residue density. Contrarily to \smash{$\omega^{\rm Res}_Q$} which is covariant with respect to the action of ${\mathfrak f}_{ \lambda}$, as we shall see shortly, $\widetilde \omega^{\rm Res}_Q$ is not. Getzler's rescaling map will enable us to compensate this lack of covariance.
\item The above constructions generalise beyond classical pseudodifferential operators, to logarithmic pseudodifferential operators. For a differential operator $P$ in ${\rm Diff}(M, \Lambda T^* M)$ with Agmon angle $\theta$, similarly to (\ref{eq:omegatildeRes}), we define
\begin{equation*}
%\label{eq:omegatildeReslog}
\widetilde \omega^{\rm Res}_{\log_\theta(P)}(x):= \frac{1}{(2\pi)^n} \int_{|\xi|=1}[ \sigma_{-n} (\log_\theta (P)) (x, \xi) {\bf 1}_x]_{[n]} {\rm d}_S\xi.
\end{equation*}
\end{itemize}
\end{rk}

The maps $U_\lambda^\sharp$ defined in \eqref{eq:actionulambda} induce a transformation on differential operators as follows: for any $P =\sum_{\vert \gamma\vert \leq m} P_\gamma D^\gamma$ in ${\rm Diff}(M,\Lambda T^*M)$ of order $m$, we define
\begin{equation}\label{eq:ulambdap}
U_\lambda^\sharp P:=\sum_{|\gamma|\leq m} U_\lambda^\sharp( P_\gamma) D^\gamma \in {\rm Diff}(M, \Lambda T^*M).
\end{equation}
A first direct consequence of \eqref{eq:ulambdap} is that
$\sigma_L\bigl(U_\lambda^\sharp P\bigr)= U_\lambda^\sharp\sigma_L(P) $ so that the operator $U_\lambda^\sharp P$ is also of order $m$ and has Agmon angle $\theta$. We build $\bigl(U_\lambda^\sharp P\bigr)_\theta^z$ and $\log_\theta\bigl(U_\lambda^\sharp P\bigr)$ following the construction in Section~\ref{sec:logoperator}.

\begin{lem} For any differential operator $P\in {\rm Diff}(M,\Lambda T^*M)$ with Agmon angle $\theta$ and order~$m$, we have
	\begin{gather}
		\sigma_{mz-j}\bigl(\bigl(U_\lambda^\sharp P\bigr)_\theta^z\bigr)(x,\xi) = U_\lambda^\sharp (\sigma_{mz-j}(P_\theta^z)(x,\xi) ),\nonumber \\
		\sigma_{-j}\bigl( \log_\theta\bigl(U_\lambda^\sharp P\bigr)\bigr)(x,\xi) = U_\lambda^\sharp (\sigma_{-j}(\log_\theta(P))(x,\xi) )\label{eq:ulambdasharpdef}
	\end{gather}
	for any $(x,\xi)\in T^*U$ and $j\geq 0$.
\end{lem}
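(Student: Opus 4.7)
The plan is to establish the operator-level identity \(U_\lambda^\sharp P=U_\lambda\circ P\circ U_\lambda^{-1}\), transport it through the holomorphic functional calculus defining \(P^z_\theta\) and \(\log_\theta P\), and finally descend to each homogeneous symbol component via the composition formula. The first step hinges on working in the orthonormal frame \(\{e^I(x,g)\}_I\) of \(\Lambda T^*M\) dual to \(O_p(x,g)\): in this trivialisation the Getzler rescaling \(U_\lambda\) is diagonal with entries \(\lambda^{\pm|I|}\) depending only on the degree \(|I|\), hence constant in \(x\). By the prescription \eqref{eq:Ds}, \(D^\gamma\) differentiates only the coefficient functions and leaves the basis \(e^I\) fixed, so it commutes with every such \(x\)-independent bundle endomorphism. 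Combined with \eqref{eq:ulambdap} this gives
\[
U_\lambda^\sharp P=\sum_{|\gamma|\leq m}\bigl(U_\lambda P_\gamma U_\lambda^{-1}\bigr)D^\gamma = U_\lambda\circ P\circ U_\lambda^{-1},
\]
so that \(U_\lambda^\sharp P\) is literally the conjugate of \(P\) by the invertible \(0\)th-order operator \(U_\lambda\).

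Once this is in hand, the transfer to the functional calculus is routine. Conjugation preserves eigenvalues, so \(\sigma_L(U_\lambda^\sharp P)=U_\lambda\sigma_L(P)U_\lambda^{-1}\) still has spectrum disjoint from the ray \(L_\theta\) and \(\theta\) remains an Agmon angle. The resolvent identity \((U_\lambda^\sharp P-\mu)^{-1}=U_\lambda(P-\mu)^{-1}U_\lambda^{-1}\) inserted into the Cauchy contour formula of Appendix~\ref{appendix2} yields \((U_\lambda^\sharp P)^z_\theta=U_\lambda P^z_\theta U_\lambda^{-1}\) for \(\mathrm{Re}(z)<0\); the standard extension \(P^z_\theta:=P^k P^{z-k}_\theta\) carries this to arbitrary \(z\in\C\), and differentiating at \(z=0\) produces \(\log_\theta(U_\lambda^\sharp P)=U_\lambda\log_\theta(P)U_\lambda^{-1}\).

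It remains to descend to symbols. Since \(U_\lambda\) is constant in both \(x\) (in the chosen frame) and \(\xi\), its total symbol is a constant matrix and all its \(\partial_\xi^\alpha\) and \(D_x^\alpha\) derivatives of positive order vanish. The Leibniz asymptotic expansion for composition therefore collapses to its leading term on either side, giving
\[
\sigma\bigl(U_\lambda Q U_\lambda^{-1}\bigr)(x,\xi)=U_\lambda\sigma(Q)(x,\xi)U_\lambda^{-1}=U_\lambda^\sharp\sigma(Q)(x,\xi)
\]
identically, not merely asymptotically. Because \(U_\lambda^\sharp\) has \(\xi\)-degree \(0\) it respects the homogeneous decomposition. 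Setting \(Q=P^z_\theta\) yields the first formula; setting \(Q=\log_\theta P\), and noting that the logarithmic piece \(m\log|\xi|\,\mathrm{Id}\) is fixed by \(U_\lambda^\sharp\) so the statement reduces to one on the classical components \(\sigma_{-j}\), yields the second. The only delicate point in the whole argument is the very first step, namely the frame-level check that \(U_\lambda\) commutes with \(D^\gamma\); the orthonormal trivialisation is essential there, since in an arbitrary frame \(U_\lambda\) would acquire \(x\)-dependence and the naive commutation would fail. After that the functional-calculus transport and the symbolic collapse are textbook.
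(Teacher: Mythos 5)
Your proof is correct, and it takes a genuinely different route from the paper's. You establish the operator-level identity $U_\lambda^\sharp P = U_\lambda\circ P\circ U_\lambda^{-1}$ first (by noting that $U_\lambda$ is a constant-in-$x$, degree-diagonal bundle endomorphism so commutes with every $D^\gamma$), transport it through the holomorphic functional calculus to get $\bigl(U_\lambda^\sharp P\bigr)_\theta^z = U_\lambda P_\theta^z U_\lambda^{-1}$ and $\log_\theta\bigl(U_\lambda^\sharp P\bigr)=U_\lambda\log_\theta(P)U_\lambda^{-1}$, and only then descend to symbols by observing that composition with a constant-symbol $0$th-order factor collapses to a pointwise product. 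The paper instead works at the symbol level throughout: it uses $\sigma\bigl(U_\lambda^\sharp P\bigr)-\mu = U_\lambda^\sharp(\sigma(P)-\mu)$ together with the star-product recursion to conclude $\sigma_{-m-j}\bigl(\bigl(U_\lambda^\sharp P-\mu\bigr)^{-1}\bigr) = U_\lambda^\sharp\sigma_{-m-j}\bigl((P-\mu)^{-1}\bigr)$, then inserts this into the Cauchy integral formula \eqref{eq:symbolqz} for the symbol of the complex power, and extends to all $z$ via \eqref{sigmajzk}. Your approach makes explicit the conjugation structure that the paper leaves implicit; what it buys is that the functional-calculus transport becomes literally trivial, with all the pseudodifferential subtlety concentrated in a single symbolic-collapse step at the end.

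One small imprecision: you claim the orthonormal frame $\{{\rm e}^I\}$ is essential for $U_\lambda$ to commute with $D^\gamma$, on the grounds that an arbitrary frame would introduce $x$-dependence. In fact $U_\lambda$ acts by the constant diagonal $\lambda^{-|I|}$ in \emph{any} degree-respecting frame of $\Lambda T^*M$, including the coordinate frame $\{{\rm d}x^I\}$; what is needed is degree-compatibility, not orthonormality. This does not affect the validity of the argument since the paper only ever uses degree-respecting frames, but the caveat as stated is an overstatement.
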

\begin{proof} We prove the first identity, the second one can be shown in a similar manner. Let~$\mu$ lie on the contour $\Gamma_\theta$. Since
		$ U_\lambda^\sharp P-\mu= U_\lambda^\sharp (P-\mu) $, the same property holds on the symbolic level 	$ \sigma\bigl(U_\lambda^\sharp P\bigr)-\mu = U_\lambda^\sharp\bigl( \sigma (P)-\mu \bigr)$. As a result, the product formula $\sigma\bigl(\bigl( U_\lambda^\sharp P-\mu\bigr)^{-1}\bigr)\star \bigl(\sigma\bigl(U_\lambda^\sharp P\bigr)-\mu \bigr)={\rm Id}$ (see, e.g., \cite[equation~(4.8.2.2)]{Sc}) which determines the homogeneous components $\sigma_{-m-j}\bigl(\bigl( U_\lambda^\sharp P-\mu\bigr)^{-1}\bigr)$ of the resolvent with $j\in \Z_{\geq 0}$, reads $\sigma\bigl(\bigl( U_\lambda^\sharp P-\mu\bigr)^{-1}\bigr)\star \bigl(U_\lambda^\sharp(\sigma( P)-\mu 1)\bigr)={\rm Id}$. It follows that $\sigma_{-m-j}\bigl(\bigl( U_\lambda^\sharp P-\mu\bigr)^{-1}\bigr)
		=U_\lambda^\sharp\sigma_{-m-j}\left(( P-\mu)^{-1}\right)$ for $j\in \Z_{\geq 0}$. { Using equation~\eqref{eq:symbolqz} in Appendix~\ref{appendix2}}, this yields for $\operatorname{Re}(z)<0$ and $j\in \Z_{\geq 0}$
\begin{align*}
\sigma_{mz-j}( \bigl(U_\lambda^\sharp P\bigr)_\theta^z)(x,\xi)&{}= \frac{{\rm i}}{2\pi} \int_{\Gamma_\theta} \mu_\theta^z \sigma_{-m-j}\bigl(\bigl(U_\lambda^\sharp P-\mu\bigr)^{-1}\bigr)(x,\xi) {\rm d}\mu\\
&{}= \frac{{\rm i}}{2\pi} \int_{\Gamma_\theta} \mu_\theta^z U_\lambda^\sharp\sigma_{-m-j}\bigl((P-\mu)^{-1}\bigr)(x,\xi) {\rm d}\mu\\
&{}= U_\lambda^\sharp(\sigma_{mz-j}(P_\theta^{z})(x,\xi)).
\end{align*}
These identities can then be extended to any complex number $z$. For $\operatorname{Re}(z)<k$ with $k\in \N$, we write $\bigl(U_\lambda^\sharp P\bigr)_\theta^z= \bigl(U_\lambda^\sharp P\bigr)^k \bigl(U_\lambda^\sharp P\bigr)_\theta^{z-k}$. Since $ \sigma_{mk-j} \bigl(\bigl(U_\lambda^\sharp P\bigr)^k\bigr)= \sigma_{mk-j} \bigl(U_\lambda^\sharp P^k\bigr)= U_\lambda^\sharp\sigma_{mk-j} ( P^k)$ for any $j\in \Z_{\geq 0}$, it follows from (\ref{sigmajzk}) that
\begin{align*}
\sigma_{mz-j}\bigl(\bigl(U_\lambda^\sharp P\bigr)_\theta^z\bigr)&{}= \sum_{a+b+\vert \alpha\vert=j} \frac{(-{\rm i})^{\vert \alpha \vert}}{\alpha!} \partial_\xi^\alpha \sigma_{mk-a} \bigl(\bigl(U_\lambda^\sharp P\bigr)^k\bigr) \partial_x^\alpha\sigma_{m(z-k)-b}\bigl( \bigl(U_\lambda^\sharp P\bigr)_\theta^{z-k}\bigr)\quad \forall j\in \Z_{\geq 0}\\
&{}= \sum_{a+b+\vert \alpha\vert=j} \frac{(-{\rm i})^{\vert \alpha \vert}}{\alpha!} U_\lambda^\sharp\partial_\xi^\alpha \sigma_{mk-a} \bigl(P^k\bigr) U_\lambda^\sharp\partial_x^\alpha\sigma_{m(z-k)-b}\bigl( P_\theta^{z-k}\bigr)\quad \forall j\in \Z_{\geq 0}\\
&{}= U_\lambda^\sharp \sigma_{mz-j}( P_\theta^z).
\tag*{\qed}
\end{align*}
\renewcommand{\qed}{}
\end{proof}
\begin{lem} %\label{lem:comm}
Given any $P\in {\rm Diff}(M,\Lambda T^*M)$ we have for any $\lambda>0$
\begin{equation}\label{eq:flambdaUlambda}
\bigl(U_\lambda^\sharp\circ\mathfrak{f}^\sharp\bigr)(P)=\bigl(\mathfrak{f}^\sharp\circ U_\lambda^\sharp\bigr)(P)
\end{equation}
for any local diffeomorphism $\mathfrak{f}\colon U\to V$.
\end{lem}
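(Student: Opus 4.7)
My plan is to recast the identity \eqref{eq:flambdaUlambda} as an instance of naturality for operator conjugation with respect to diffeomorphism pull-back. The crucial observation is that the Getzler rescaling $U_\lambda$ of Definition~\ref{getzlerrescaling}, restricted to $\Lambda T^*M$, is nothing but scalar multiplication by $\lambda^{-k}$ on the subbundle $\Lambda^k T^*M$. Hence it extends to a fibrewise bundle automorphism of $\Lambda T^*M$, which I denote again by $U_\lambda$, and which gives rise to a zeroth-order multiplication operator on sections of $\Lambda T^*M$.

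My first step is to show that the definition \eqref{eq:ulambdap} of $U_\lambda^\sharp P$ coincides with operator conjugation $U_\lambda P U_\lambda^{-1}$. In the natural local trivialization of $\Lambda T^*M$ by the multi-index forms $\{{\rm d}x^I\}$, the derivatives $D^\gamma$ act componentwise on the scalar coefficients, so they preserve each degree-$k$ component and commute with $U_\lambda$. Writing $P=\sum_{|\gamma|\le m} P_\gamma D^\gamma$, this gives
\[
U_\lambda P U_\lambda^{-1} = \sum_{|\gamma|\le m} U_\lambda P_\gamma U_\lambda^{-1}\, D^\gamma = \sum_{|\gamma|\le m} U_\lambda^\sharp(P_\gamma)\, D^\gamma = U_\lambda^\sharp P,
\]
where the middle equality uses \eqref{eq:actionulambda}.

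Next, I would verify two properties of $\mathfrak{f}^\sharp$. First, directly from its definition \eqref{eq:flambda}, $\mathfrak{f}^\sharp$ is a unital algebra homomorphism: $\mathfrak{f}^\sharp(AB) = \mathfrak{f}^\sharp(A)\,\mathfrak{f}^\sharp(B)$, since
\[
\mathfrak{f}^\sharp(AB)\,s = AB\bigl(s\circ\mathfrak{f}^{-1}\bigr)\circ\mathfrak{f} = A\bigl(\bigl(\mathfrak{f}^\sharp B\,s\bigr)\circ\mathfrak{f}^{-1}\bigr)\circ\mathfrak{f} = \mathfrak{f}^\sharp(A)\,\mathfrak{f}^\sharp(B)\,s.
\]
Second, $\mathfrak{f}^\sharp U_\lambda = U_\lambda$, viewed as fibrewise multiplication operators on $\mathfrak{f}^*\Lambda T^*M$. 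This is immediate because the form-degree grading of $\Lambda T^*M$ is preserved under any diffeomorphism pull-back, and $U_\lambda$ is intrinsically characterised by its scalar action on each graded component.

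Combining these gives
\[
\mathfrak{f}^\sharp\bigl(U_\lambda^\sharp P\bigr) = \mathfrak{f}^\sharp\bigl(U_\lambda P U_\lambda^{-1}\bigr) = \mathfrak{f}^\sharp(U_\lambda)\,\mathfrak{f}^\sharp(P)\,\mathfrak{f}^\sharp\bigl(U_\lambda^{-1}\bigr) = U_\lambda\,\mathfrak{f}^\sharp(P)\,U_\lambda^{-1} = U_\lambda^\sharp\bigl(\mathfrak{f}^\sharp P\bigr),
\]
which is the claimed identity. There is no real obstacle here: the result reduces to the naturality of the degree grading under diffeomorphisms and the multiplicativity of $\mathfrak{f}^\sharp$, both of which are elementary verifications in the adapted trivialization $\{{\rm d}x^I\}$.
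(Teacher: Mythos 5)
Your proof is correct and follows essentially the same route as the paper's: the key observation in both is that diffeomorphism pull-back preserves the form-degree grading, so $U_\lambda$ (which is scalar multiplication $\lambda^{-k}$ on degree $k$) commutes with $\mathfrak{f}^*$, and the identity then follows by unwinding the conjugations $\mathfrak{f}^\sharp Q = \mathfrak{f}^* Q \mathfrak{f}_*$ and $U_\lambda^\sharp Q = U_\lambda Q U_\lambda^{-1}$. You make one step explicit that the paper leaves implicit, namely that the definition \eqref{eq:ulambdap} of $U_\lambda^\sharp P$ for a differential operator agrees with the operator conjugation $U_\lambda P U_\lambda^{-1}$ because $D^\gamma$ commutes with the constant bundle automorphism $U_\lambda$ in the $\{{\rm d}x^I\}$ trivialization; this is a welcome clarification.
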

\begin{proof} First, we show that $U_\lambda\circ \mathfrak{f}^*=\mathfrak{f}^*\circ U_\lambda$, where by definition $\mathfrak{f}^*\omega=\omega\circ \mathfrak{f}$ for any differential form $\omega$. Indeed, we compute
\begin{align*}
(U_\lambda\circ \mathfrak{f}^*)\omega&{}=U_\lambda(\omega\circ \mathfrak{f})
=\sum_{i=1}^n \lambda^{-i}(\omega\circ \mathfrak{f})_{[i]}=\sum_{i=1}^n \lambda^{-i}\omega_{[i]}\circ \mathfrak{f}
=(\mathfrak{f}^*\circ U_\lambda)\omega.
\end{align*}
Hence, we get for $P\in {\rm Diff}(M,\Lambda T^*M)$
\[
\bigl(U_\lambda^\sharp\circ \mathfrak{f}^\sharp\bigr)(P)=U_\lambda\circ \mathfrak{f}^* \circ P\circ \mathfrak{f}_*\circ U_\lambda^{-1}=\mathfrak{f}^* \circ U_\lambda\circ P\circ U_\lambda^{-1}\circ \mathfrak{f}_*=\bigl(\mathfrak{f}^\sharp \circ U_\lambda^\sharp\bigr)(P).
\tag*{\qed}
\]
\renewcommand{\qed}{}
\end{proof}

As a direct consequence of equation~\eqref{eq:ulambdasharpdef}, we get the following

\begin{prop} \label{prop:pullbacklocalform} For any differential operator $P\in {\rm Diff}(M, \Lambda T^*M)$ of Agmon angle $\theta$, and for any $\lambda>0$:
\begin{gather}\label{eq:tildeu}
 \omega^{\rm Res}_{\log_\theta (U_\lambda^\sharp P)}=\omega^{\rm Res}_{\log_\theta P},\qquad \tilde \omega^{\rm Res}_{\log_\theta(U_\lambda^\sharp P)} =\lambda^{-n} \tilde \omega^{\rm Res}_{\log_\theta P},\qquad \tilde \omega^{ {\rm Res}}_{\log_\theta{({\mathfrak f}}_{ \lambda}^\sharp P)}=\lambda^{n} \tilde \omega^{\rm Res}_{\log_\theta P}\circ \mathfrak{f}_\lambda.
 \end{gather}
In particular, we get
	\begin{equation}\label{eq:tildeomegaflambda}
		\tilde \omega^{ {\rm Res}}_{\log_\theta({ U_\lambda^\sharp {\mathfrak f}}_{ \lambda}^\sharp P)}=\tilde \omega^{\rm Res}_{\log_\theta P}\circ {\mathfrak f}_{ \lambda}.\end{equation}
\end{prop}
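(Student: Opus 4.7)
The plan is to establish the three identities packaged in \eqref{eq:tildeu} in sequence and then deduce \eqref{eq:tildeomegaflambda} by combining the last two. The engines will be the symbol identity \eqref{eq:ulambdasharpdef} and the pullback symbol formula \eqref{eq:sigmaflambda}, together with two elementary facts already recorded in the excerpt: $\mathfrak{f}^\sharp$ commutes with $\log_\theta$, and $U_\lambda$ acts on $\Lambda^k T^*_x M$ by the scalar $\lambda^{-k}$ (this follows from Definition~\ref{getzlerrescaling} or iteratively from the first relation in \eqref{eq:ulambdaformes}).

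For the first identity, applying \eqref{eq:ulambdasharpdef} to the $-n$-homogeneous component gives $\sigma_{-n}(\log_\theta(U_\lambda^\sharp P))(x,\xi) = U_\lambda\,\sigma_{-n}(\log_\theta P)(x,\xi)\,U_\lambda^{-1}$, a conjugation on $\operatorname{End}(\Lambda T^*_x M)$; conjugation invariance of the fibrewise trace ${\rm tr}^{\Lambda T^*M}$ then finishes the argument. For the second identity, I would again invoke \eqref{eq:ulambdasharpdef} and observe that $U_\lambda^{-1}\mathbf{1}_x = \mathbf{1}_x$, since $\mathbf{1}_x \in \Lambda^0$, so that
\[
\sigma_{-n}(\log_\theta(U_\lambda^\sharp P))(x,\xi)\,\mathbf{1}_x \;=\; U_\lambda\bigl(\sigma_{-n}(\log_\theta P)(x,\xi)\,\mathbf{1}_x\bigr).
\]
Since $U_\lambda$ acts on the degree-$n$ component by multiplication by $\lambda^{-n}$, extracting $[\,\cdot\,]_{[n]}$ and integrating over $|\xi|=1$ produces the announced factor $\lambda^{-n}$.

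For the third identity, I would combine $\mathfrak{f}_\lambda^\sharp \circ \log_\theta = \log_\theta \circ \mathfrak{f}_\lambda^\sharp$ with \eqref{eq:sigmaflambda}: using $(\mathfrak{f}_\lambda^*)^t\xi = \lambda^{-1}\xi$ (implicit in the proof of the preceding lemma via \eqref{eq:derivativedifferent}) and the $-n$-homogeneity of $\sigma_{-n}(\log_\theta P)$ in $\xi$, I obtain $\sigma_{-n}(\log_\theta(\mathfrak{f}_\lambda^\sharp P))(x,\xi) = \lambda^n\,\sigma_{-n}(\log_\theta P)(\mathfrak{f}_\lambda(x),\xi)$; plugging this into the definition \eqref{eq:omegatildeRes} and identifying the $0$-forms $\mathbf{1}_x$ and $\mathbf{1}_{\mathfrak{f}_\lambda(x)}$ delivers the claim. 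The corollary \eqref{eq:tildeomegaflambda} is then immediate, either by applying the second identity to $\mathfrak{f}_\lambda^\sharp P$ in place of $P$ and then the third, or by first commuting $U_\lambda^\sharp$ past $\mathfrak{f}_\lambda^\sharp$ via \eqref{eq:flambdaUlambda} and then applying the second and third in reverse order. I do not foresee a serious obstacle; the argument is essentially a disciplined bookkeeping of $\lambda$-powers from three distinct mechanisms — the conjugation by $U_\lambda$, the change of base point under $\mathfrak{f}_\lambda$, and the homogeneity of $\sigma_{-n}$ — with the only subtle point being the recognition that $U_\lambda$ acts trivially on $\mathbf{1}_x$ precisely because $\mathbf{1}_x$ sits in $\Lambda^0$.
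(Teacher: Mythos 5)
Your proposal is correct and follows essentially the same route as the paper: the first two identities both rest on \eqref{eq:ulambdasharpdef} (with, respectively, the conjugation-invariance of the trace and the degree-$k$ scaling of $U_\lambda$ together with $U_\lambda^{-1}\mathbf{1}_x=\mathbf{1}_x$), the third on the commutation of $\mathfrak{f}_\lambda^\sharp$ with $\log_\theta$ plus \eqref{eq:sigmaflambda} and the $(-n)$-homogeneity of $\sigma_{-n}$, and \eqref{eq:tildeomegaflambda} is then obtained by chaining the last two as you describe.
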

\begin{proof} For any $\lambda>0$, we write
\begin{align*}
 (2\pi)^n\omega_{\log_\theta(U_{ \lambda}^\sharp P)}^{{ {\rm Res}}}(x) &{}= \biggl( \int_{\vert \xi\vert=1} {\rm tr}^{\Lambda T^*M}\bigl(\sigma_{ {-n}}\bigl(\log_\theta\bigl(U_{ \lambda}^\sharp P\bigr)\bigr)(x,\xi)\bigr) {\rm d}_S\xi\biggr) {\rm d}{ x^1}\wedge \dots \wedge {\rm d}{x^n}\\
 &{}\bui{=}{\eqref{eq:ulambdasharpdef}}\biggl( \int_{\vert \xi\vert=1} {\rm tr}^{\Lambda T^*M}\bigl(U_{ \lambda}^\sharp(\sigma_{ {-n}}( \log_\theta P)(x,\xi))\bigr) {\rm d}_S\xi\biggr) {\rm d}{ x^1}\wedge \dots \wedge {\rm d}{x^n}\\
 &{}=\biggl( \int_{\vert \xi\vert=1} {\rm tr}^{\Lambda T^*M}\bigl(U_{ \lambda} \sigma_{ {-n}}( \log_\theta P)(x,\xi)U_\lambda^{-1}\bigr) {\rm d}_S\xi\biggr) {\rm d}{ x^1}\wedge \dots \wedge {\rm d}{x^n}\\
 &{}= \biggl(\int_{\vert \xi\vert=1} {\rm tr}^{\Lambda T^*M}(\sigma_{ {-n}}( \log_\theta P)(x,\xi)) {\rm d}_S\xi\biggr) {\rm d}{ x^1}\wedge \dots \wedge {\rm d}{x^n}\\
 &{}=(2\pi)^n\omega_{ \log_\theta P}^{{ {\rm Res}}}(x).
\end{align*}
To prove the two other equalities, we also compute
\begin{align*}
		%\label{eq:tildeomegaflambdaA} 	
		(2\pi)^n \tilde \omega^{\rm Res}_{\log_\theta(U_{ \lambda}^\sharp P)}(x) &{}= \int_{\vert \xi\vert=1} \big[ \bigl(\sigma_{ {-n}}\bigl(\log_\theta\bigl(U_{ \lambda}^\sharp P\bigr)\bigr)(x,\xi)\bigr) {\bf 1}_x\big]_{[n]} {\rm d}_S\xi\nonumber\\
 &{}\bui{=}{\eqref{eq:ulambdasharpdef}}\int_{\vert \xi\vert=1} \big[ \bigl(U_\lambda^\sharp (\sigma_{ {-n}}(\log_\theta( P))(x,\xi))\bigr) {\bf 1}_x\big]_{[n]} {\rm d}_S\xi\nonumber\\
 &{}=\int_{\vert \xi\vert=1} \big[U_\lambda (\sigma_{ {-n}}(\log_\theta( P))(x,\xi){\bf 1}_x)\big]_{[n]} {\rm d}_S\xi\nonumber\\
 &{}=\int_{\vert \xi\vert=1}\sum_{i=0}^n \big[U_\lambda [ \sigma_{-n}(\log_\theta( P))(x,\cdot) {\bf 1}_x ]_{[i]}\big]_{[n]} {\rm d}_S\xi\nonumber\\
&{}=\int_{\vert \xi\vert=1} \sum_{i=0}^n \big[\lambda^{-i}[ \sigma_{-n}(\log_\theta( P))(x,\cdot) {\bf 1}_x ]_{[i]}\big]_{[n]} {\rm d}_S\xi\nonumber\\
&{}=\lambda^{-n}\int_{\vert \xi\vert=1} [\sigma_{-n}(\log_\theta( P))(x,\cdot){\bf 1}_x]_{[n]} {\rm d}_S\xi\nonumber\\
 &{}= \lambda^{-n} (2\pi)^n \tilde \omega^{\rm Res}_{ \log_\theta( P)}(x).
\end{align*}
To prove the last equality in \eqref{eq:tildeu}, we use equality~(\ref{eq:sigmaflambda}) to write
\begin{align*}
(2\pi)^n \tilde \omega^{\rm Res}_{\log_\theta (\mathfrak{f}_\lambda^\sharp P)}(x)&{}=\int_{\vert \xi\vert=1} \big[ \sigma_{-n}\bigl( { {\mathfrak f}}_{ \lambda}^\sharp \log_\theta P\bigr)(x,\xi) {\bf 1}_x \big]_{[n]} {\rm d}_S\xi\\
&{}=\int_{\vert \xi\vert=1} \big[ \bigl(\sigma_{ {-n}}({ \log_\theta P}) \bigl({ { {\mathfrak f}}_{ \lambda}(x)},({\mathfrak f}_{ \lambda}^*)^t (\xi) \bigr) \bigr) {\bf 1}_x \big]_{[n]} {\rm d}_S\xi \\
&{}=\int_{\vert \xi\vert=1} \big[ \bigl(\sigma_{ {-n}}({ \log_\theta P}) \bigl({ { {\mathfrak f}}_{ \lambda}(x)},\lambda^{-1}\xi \bigr) \bigr) {\bf 1}_x \big]_{[n]} {\rm d}_S\xi \\
&{}=\lambda^n (2\pi)^n \tilde \omega^{\rm Res}_{\log_\theta P}({\mathfrak f}_{ \lambda}(x)).
\end{align*}
Finally, equality~\eqref{eq:tildeomegaflambda} is obtained by combining the last two identities in \eqref{eq:tildeu}. This gives the statement.
\end{proof}

\subsection{The Wodzicki residue density versus a local Berezin type density } %\label{sec:relationresidue}
In this paragraph, we enhance the well-known algebraic identity \eqref{eq:supertraceformula} to a lesser known identity of local densities on spin manifolds. Let now $(M,g)$ be a spin manifold of even dimension~$n$ and let $\Sigma M$ be its spinor bundle. The morphism $c^g$ defined in \eqref{eq:cliffend} induces on a differential operator $P=\sum_{|\gamma|\leq m} P_\gamma D^\gamma$ in ${\rm Diff}(M,\Sigma M)$ of order $m$, the operator $c^g(P)$ given by
\begin{equation}\label{eq:cgp}
 c^g(P):=\sum_{|\gamma|\leq m} c^g ( P_\gamma) D^\gamma
\in {\rm Diff}(M, \Lambda T^*M),
\end{equation}
where we have used the identification $\Cl(TM)\otimes\mathbb{C}\simeq \operatorname{End}(\Sigma M)$ as in Proposition~\ref{pro:spinorbundle}. Clearly, the operator $c^g(P)$ has the same order as $P$. In order to find the relation between~$\tilde\omega^{\rm Res}$ and~$\omega^{\rm Res}$, we need the following lemma:

\begin{lem} \label{lem:sigmalo} For any differential operator $P\in {\rm Diff}(M,\Sigma M)$ of Agmon angle $\theta$ and order $m$, the operator $c^g(P)$ has also an Agmon angle $\theta$. Also, we have that
\[\sigma_{-j}(\log _\theta (c^g(P)))(x,\xi)=c^g(\sigma_{-j}(\log _\theta(P))(x,\xi)),\qquad \forall j\in \Z_{\geq 0}. \]
\end{lem}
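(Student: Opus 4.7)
The plan is to mirror, step by step, the proof of the preceding lemma on $U_\lambda^\sharp$, since $c^g$ shares its two essential features: under the identification $\Cl(TM)\otimes\C\simeq\operatorname{End}(\Sigma M)$ from Proposition~\ref{pro:spinorbundle}, it becomes a unit-preserving fibrewise algebra homomorphism $\operatorname{End}(\Sigma_xM)\to\operatorname{End}(\Lambda T_x^*M)$ that, being applied pointwise to coefficients in any smooth local frame, commutes with the base derivatives $D^\gamma$. In particular $c^g(PQ)=c^g(P)\,c^g(Q)$ on ${\rm Diff}(M,\Sigma M)$.

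First, I would check that $\theta$ remains an Agmon angle for $c^g(P)$. From \eqref{eq:cgp} one reads off directly
\begin{equation*}
\sigma_L(c^g(P))(x,\xi)=c^g(\sigma_L(P)(x,\xi)).
\end{equation*}
Since $M$ is spin of even dimension, the complexified fibre $\Lambda T_x^*M\otimes\C$ is isomorphic, as a $\Cl(T_xM)\otimes\C$-module, to a direct sum of $2^{n/2}$ copies of the irreducible spin module $\Sigma_xM$. Consequently $c^g(A)$ has the same set of eigenvalues as $A$ for every $A\in\operatorname{End}(\Sigma_xM)$, and the spectral separation of $\sigma_L(P)(x,\xi)$ from the ray $L_\theta$ (and from a solid sector around it) transfers to $\sigma_L(c^g(P))(x,\xi)$.

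Next I would reproduce the symbolic chain of identities used in the previous lemma, replacing $U_\lambda^\sharp$ by $c^g$. From $c^g({\rm Id})={\rm Id}$ we get $\sigma(c^g(P))-\mu=c^g(\sigma(P)-\mu)$ on $\Gamma_\theta$. Using that $c^g$ commutes with the $\star$-product (being a fibrewise algebra homomorphism commuting with $\partial_x$ and $\partial_\xi$ on symbol components), inversion at the symbolic level yields inductively
\begin{equation*}
\sigma_{-m-j}\bigl((c^g(P)-\mu)^{-1}\bigr)=c^g\bigl(\sigma_{-m-j}((P-\mu)^{-1})\bigr),\qquad j\in\Z_{\geq 0}.
\end{equation*}
Feeding this into the integral representation of the complex power and pulling $c^g$ outside the contour integral by fibrewise linearity gives
\begin{equation*}
\sigma_{mz-j}\bigl(c^g(P)_\theta^z\bigr)(x,\xi)=c^g\bigl(\sigma_{mz-j}(P_\theta^z)(x,\xi)\bigr)
\end{equation*}
first for $\operatorname{Re}(z)<0$ and then for all $z\in\C$ via the splitting $P_\theta^z=P^kP_\theta^{z-k}$ combined with $c^g(P^k)=c^g(P)^k$, exactly as in the previous lemma.

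Differentiating this identity at $z=0$, and exchanging $\partial_z$ with the fibrewise linear map $c^g$, then produces the claimed formula for $\sigma_{-j}(\log_\theta(c^g(P)))$. The main delicate point is the compatibility of $c^g$ with the $\star$-product, since source and target are endomorphism bundles of different vector bundles; this reduces, in any local trivialisation, to the two elementary facts that $c^g$ is a unit-preserving algebra homomorphism on fibres and that it commutes with derivatives in $x$ and $\xi$ of symbol components, both of which are immediate from the definition of $c^g$ on differential operators given in \eqref{eq:cgp}.
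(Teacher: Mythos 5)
Your proof is correct and follows essentially the same route as the paper's: the symbol identity is obtained by replaying the argument used for $U_\lambda^\sharp$ with $c^g$ in its place, relying on $c^g$ being a unit-preserving fibrewise algebra morphism that commutes with $\partial_x$, $\partial_\xi$, the $\star$-product, and the contour integral. For the Agmon angle the paper simply invokes injectivity of $c^g$ to conclude that the eigenvalues of $c^g(\sigma_L(P))$ form a subset of those of $\sigma_L(P)$, while your module-decomposition argument gives the stronger equality of spectra; both suffice.
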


\begin{proof} From the injectivity of $c^g$ we easily deduce that the set of eigenvalues of $c^g(\sigma_L(P)(x,\xi))$ (resp.\ $c^g(P)$) is a subset of the one of $\sigma_L(P)(x,\xi)$ (resp.\ $P$). Thus, an Agmon angle $\theta$ for $P$ is also one for $c^g(P)$.
The second part of the assertion can be proved along the same lines as the proof of equation~\eqref{eq:ulambdasharpdef} with $c^g$ playing the role of $U_\lambda^\sharp$.
\end{proof}

By choosing $a=\sigma_{-n} (\log_\theta (P)) (x, \xi)$ in~\eqref{eq:supertraceformula}, for any differential operator $P\in {\rm Diff}(M,\Sigma M)$ which is $\Z_2$-grading, we get that

\begin{prop}\label{prop:tildeomegavsomega}
For any differential operator $P\in {\rm Diff}(M,\Sigma M)$ with Agmon angle $\theta$, which is even for the $\Z_2$-grading $\Sigma M=\Sigma^+ M\oplus \Sigma^- M$, we have
\[\widetilde \omega^{\rm Res}_{\log_\theta (c^g(P))}(x)=j_g(x)(-2{\rm i})^{-n/2}\omega^{\rm sRes}_{\log_\theta (P)}(x),\]
where $j_g(x)=\sqrt{\det  (g_{ij}(x))}$.
\end{prop}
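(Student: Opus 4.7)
The plan is to reduce the claim, point by point in $(x,\xi)$, to the pointwise Berezin-type identity \eqref{eq:supertraceformula} and then integrate out the cosphere variable.

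First, I would invoke Lemma~\ref{lem:sigmalo} to commute $c^g$ past the extraction of the $(-n)$-homogeneous component of the logarithmic symbol:
\[
\sigma_{-n}(\log_\theta(c^g(P)))(x,\xi) = c^g\bigl(\sigma_{-n}(\log_\theta(P))(x,\xi)\bigr).
\]
Setting $a_{x,\xi}:=\sigma_{-n}(\log_\theta(P))(x,\xi)\in \operatorname{End}(\Sigma_x M)$, the integrand defining $\widetilde\omega^{\rm Res}_{\log_\theta(c^g(P))}(x)$ becomes $[c^g(a_{x,\xi})\,{\bf 1}_x]_{[n]}$, which is precisely the object one pairs with the supertrace through the Berezin formula.

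Next, since $P$ is even for the $\Z_2$-grading, its resolvent $(P-\mu)^{-1}$, its complex powers $P_\theta^z$, and hence $\log_\theta(P)$ are all even operators, so the homogeneous components of their symbols correspond under the isomorphism $\Cl(T_xM)\otimes\C\simeq \operatorname{End}(\Sigma_xM)$ to even Clifford elements. This is exactly the setting in which formula~\eqref{eq:supertraceformula} from the appendix applies, yielding the pointwise identity
\[
[c^g(a_{x,\xi})\,{\bf 1}_x]_{[n]} = (-2{\rm i})^{-n/2}\,{\rm str}^{\Sigma M}(a_{x,\xi})\,\nu_x,
\]
where $\nu_x = j_g(x)\,{\rm d}x^1\wedge\cdots\wedge {\rm d}x^n$ is the Riemannian volume form expressed in the local normal geodesic chart. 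Integrating this equality against ${\rm d}_S\xi$ over the unit cosphere $\{|\xi|=1\}$ and dividing by $(2\pi)^n$ produces $\widetilde\omega^{\rm Res}_{\log_\theta(c^g(P))}(x)$ on the left by definition \eqref{eq:omegatildeRes}, while on the right the scalar $(-2{\rm i})^{-n/2} j_g(x)$ factors out and leaves $\frac{1}{(2\pi)^n}\int_{|\xi|=1}{\rm str}^{\Sigma M}(\sigma_{-n}(\log_\theta(P))(x,\xi))\,{\rm d}_S\xi$, which multiplied by ${\rm d}x^1\wedge\cdots\wedge {\rm d}x^n$ is exactly $\omega^{\rm sRes}_{\log_\theta(P)}(x)$.

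The main (indeed the only substantive) point to verify is that evenness of $P$ propagates to $\log_\theta(P)$, and hence to every homogeneous component of its symbol, so that formula~\eqref{eq:supertraceformula} is legitimately applicable; this is immediate from the fact that each $(P-\mu)^{-1}$ is even for $\mu\in\Gamma_\theta$ together with the Cauchy contour representation of $P_\theta^z$ and $\log_\theta(P)$. Beyond this parity check, the argument is pure bookkeeping of the $(-2{\rm i})^{n/2}$ factor from the Berezin formula and of the factor $\sqrt{\det(g_{ij}(x))}$ which arises in converting the coordinate volume form ${\rm d}x^1\wedge\cdots\wedge {\rm d}x^n$ into the Riemannian volume form $\nu_x$.
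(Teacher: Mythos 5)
Your proof is correct and follows essentially the same route as the paper's: apply Lemma~\ref{lem:sigmalo} with $j=n$ to pull $c^g$ out of the logarithmic symbol, then use the Berezin identity \eqref{eq:supertraceformula} pointwise in $(x,\xi)$, convert ${\rm e}^1\wedge\cdots\wedge{\rm e}^n$ to $j_g(x)\,{\rm d}x^1\wedge\cdots\wedge{\rm d}x^n$, and integrate over the cosphere. Your explicit tracking of how evenness of $P$ propagates through the resolvent and contour integral to $\log_\theta(P)$ is a welcome elaboration of a point the paper only notes in passing at the end of its proof.
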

\begin{proof} Using Lemma~\ref{lem:sigmalo} for $j=n$, we compute
\begin{align*}
\widetilde \omega^{\rm Res}_{\log_\theta (c^g(P))}(x)&{}=\frac{1}{(2\pi)^n}\int_{|\xi|=1} [ \sigma_{-n} (\log_\theta (c^g(P))) (x, \xi) {\bf 1}_x ]_{[n]} {\rm d}_S\xi\\
&{}=\frac{1}{(2\pi)^n}\int_{|\xi|=1} [ c^g(\sigma_{-n} (\log_\theta (P)) (x, \xi)) {\bf 1}_x ]_{[n]} {\rm d}_S\xi\\
&{}=\frac{1}{(2\pi)^n}\int_{|\xi|=1} [ {\bf s}^g(\sigma_{-n} (\log_\theta (P)) (x, \xi)) ]_{[n]} {\rm d}_S\xi\\
&{}=\frac{1}{(2\pi)^n}\int_{|\xi|=1} (T\circ {\bf s}^g)(\sigma_{-n} (\log_\theta (P)) (x, \xi) ){\rm e}^1\wedge\dots \wedge {\rm e}^n {\rm d}_S\xi\\
&{}\bui{=}{\eqref{eq:supertraceformula}}\frac{j_g(x)(-2{\rm i})^{-n/2}}{(2\pi)^n}\biggl(\int_{|\xi|=1}{\rm str} (\sigma_{-n}(\log_\theta (P)) (x, \xi) ) {\rm d}_S\xi \biggr) {\rm d}x^1\wedge\dots\wedge {\rm d}x^n\\
&{}=j_g(x)(-2{\rm i})^{-n/2} \omega^{\rm sRes}_{\log_\theta (P)}(x).
\end{align*}
Here, we use the fact that $P$ is $\Z_2$-graded, meaning that $\sigma_{-j}(\log _\theta (P))(x,\xi)$ is in $\operatorname{End}(\Sigma_x^{\pm} M)\simeq \Cl(T_xM)^+\otimes \C$ and, thus, equation~\eqref{eq:supertraceformula} is applied.
\end{proof}

\begin{rk}%\label{rk:tildeomegadensity}
As a consequence of Proposition~\ref{prop:tildeomegavsomega}, for a differential operator $P\in \Psi_{\rm cl}(M, \Sigma M)$ with Agmon angle $\theta$, $\widetilde \omega^{\rm Res}_{\log_\theta (c^g(P))}$ does define a global density since $\omega^{\rm sRes}_{\log_\theta (P)}$ does.
\end{rk}

\begin{cor}\label{cor:relationomegatilde}
For any differential operator $P\in {\rm Diff}(M,\Sigma M)$ of Agmon angle $\theta$ and order~$m$ which is even for the $\Z_2$-grading $\Sigma M=\Sigma^+ M\oplus \Sigma^- M$, we have
\[\tilde \omega^{{\rm Res}}_{ \log _\theta (U_\lambda^\sharp {\mathfrak f}_{ \lambda}^\sharp ( c^g(P)))}=(j_g\circ {\mathfrak f}_{\lambda}) (-2{\rm i})^{-n/2}\omega^{ {\rm sRes}}_{\log _\theta (P)}\circ \mathfrak{f}_\lambda.\]
\end{cor}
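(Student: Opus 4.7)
The statement is a direct consequence of chaining the two preceding results. The plan is to apply Proposition \ref{prop:pullbacklocalform} to the operator $c^g(P)$ in place of $P$, and then substitute the identity of Proposition \ref{prop:tildeomegavsomega} on the right-hand side.

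First I would observe that $c^g(P)$ is a differential operator in ${\rm Diff}(M,\Lambda T^*M)$ with the same Agmon angle $\theta$ as $P$ by Lemma \ref{lem:sigmalo}, so Proposition \ref{prop:pullbacklocalform} applies to it. Invoking equation \eqref{eq:tildeomegaflambda} with $P$ replaced by $c^g(P)$, I obtain
\[
\tilde \omega^{{\rm Res}}_{\log_\theta(U_\lambda^\sharp {\mathfrak f}_\lambda^\sharp c^g(P))}(x)= \tilde \omega^{{\rm Res}}_{\log_\theta c^g(P)}({\mathfrak f}_\lambda(x)).
\]

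Next, since $P$ is even for the $\Z_2$-grading, Proposition \ref{prop:tildeomegavsomega} gives
\[
\tilde \omega^{{\rm Res}}_{\log_\theta c^g(P)}(y)= j_g(y)(-2{\rm i})^{-n/2}\omega^{{\rm sRes}}_{\log_\theta P}(y)
\]
for every point $y$, in particular for $y={\mathfrak f}_\lambda(x)$. Combining these two equalities yields the stated formula. The proof is essentially a concatenation: no new estimate is required, the only mild point being to make sure the hypotheses of the two propositions are preserved through the intermediate operation $c^g$, which is guaranteed by Lemma \ref{lem:sigmalo}.

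Since the argument is transparent, there is no genuine obstacle; the proof amounts to at most a three-line display. One should perhaps underline that the factor $j_g\circ {\mathfrak f}_\lambda$ on the right-hand side comes precisely from evaluating $j_g$ at the point ${\mathfrak f}_\lambda(x)$ (and not at $x$), which is why the volume factor gets pre-composed with ${\mathfrak f}_\lambda$ in the final expression.
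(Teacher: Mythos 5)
Your proof is correct and follows exactly the same two-step chaining used in the paper: apply equation \eqref{eq:tildeomegaflambda} to $c^g(P)$ and then invoke Proposition~\ref{prop:tildeomegavsomega} at the point $\mathfrak{f}_\lambda(x)$. The remark that Lemma~\ref{lem:sigmalo} guarantees $c^g(P)$ inherits the Agmon angle is a welcome precision the paper leaves implicit.
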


\begin{proof} By (\ref{eq:tildeomegaflambda}) applied to the differential operator $c^g(P)$, we write
\begin{equation*}
%	\hat \pi^* \widetilde \omega^{\rm Res}_A(\cdot , \lambda)=
		\tilde \omega^{\rm Res}_{\log _\theta (c^g(P))}\circ {\mathfrak f}_{ \lambda} = \tilde \omega^{ {\rm Res}}_{{ \log _\theta (U_\lambda^\sharp {\mathfrak f}}_{ \lambda}^\sharp ( c^g(P)))}.
 \end{equation*}
The statement then follows from Proposition~\ref{prop:tildeomegavsomega} at the point $\mathfrak{f}_\lambda(\cdot)$.
\end{proof}

\section{The geometric set-up} %\label{sec:normalcone}
In this section, we review the geometric set up underlying Getzler rescaling. Specifically, in the language of \cite[Section~1.1]{DS1},
we deform the manifold $M$ to a manifold $\M$ via a deformation to the normal cone to a given point $p$, and pull back the Riemannian metric $g$ on the manifold under the canonical projection $\hat \pi\colon \M\to M$ to a family $\{g_\lambda\}_{\lambda>0}$ of dilated metrics (see \eqref{eq:glambda}). This family will play a crucial role when deforming operators.
\subsection{Deformation to the normal cone to a point}%\label{sub:deformation}
For an embedding $M_0\hookrightarrow M$ of two manifolds, the {\it deformation to the normal cone} is defined as
\begin{equation*}
D(M_0, M):= (M\times \R_+ )\cup ({\mathcal N } M_0\times\{0\}),
\end{equation*}
where ${\mathcal N } M_0$ is the total space of the normal bundle to $M_0$ in $M$. The deformation to the normal cone extended to the embedding of the base of a groupoid into the groupoid gives rise to the tangent groupoid introduced by Connes \cite{C} which proves useful in the context of manifolds with singularities. Here, choosing a reference point $p\in M$ fixed throughout the paper, we take $M_0=\{p\}$ so that ${\mathcal N } M_0=T_{p}M$, in which case the deformation amounts to replacing $M$ by the {\it deformed manifold around $p$} defined as
\[
\mathbb M:=(M\times \R_+ )\cup (T_{p}M\times\{0\}).
\]
 The gluing of the two parts, namely $M\times \R_+$ and $T_{p}M\times\{0\}$ is carried out via the local diffeomorphism $\mathfrak{f}_\lambda$ described in \eqref{eq:flambdaUplambda} as follows. We build the map (denoted by $\Theta$ in \cite[Section~3.1]{DS2}, but here we adopt the notations of \cite{vEY})
 \[
 \Exp_{p}\colon \ T_pM\times\R\supset \B_{p, r }\longrightarrow \M
 \]
defined on
 \[
 \B_{p,r}:= (\{({\mathbf x}, \lambda)\in T_{p}M\times \R_+, {\mathbf x}\in B_{r/
\lambda}\} )\cup (T_{p}M\times \{0\} )
\]
 by the identity map on $T_{p}M\times \{0\}$ and on the remaining part of $\B_{p,r}$ as follows:
 \begin{align}
 \Exp_{p}\colon \ \{ ({\mathbf x}, \lambda)\in T_{p}M\times \R_+, {\mathbf x}\in B_{r/
\lambda}\}&\longrightarrow U_{p}\times \R_+ \subset\mathbb M,\nonumber\\
({\mathbf x}, \lambda)&\longmapsto (\exp_{p}(\lambda {\mathbf x}), \lambda), \qquad {\rm for}\ \lambda>0.\label{eq:Fx0}
\end{align}

We consider the open set in $\M$ \cite[Section~1.1]{DS1} \[W_{p}:= ( U_{p}\times \R_+)\cup (T_{p} M\times\{0\} )\subset \M .\]

The deformed manifold $\M$ is endowed with the smooth structure for which $\Exp_{p}$ is a diffeomorphism, and which restricts to the standard smooth structure on {$M\times \R_+$} (we refer the reader to \cite[above Lemma~4.3]{H} for further details). Via $\Exp_p$ the point $(x=\exp_p({\bf x}), \lambda)$ is identified with the point $\mathfrak{f}_\lambda(x)={\rm exp}_p(\lambda {\bf x})$ and the point $p$ is identified with ${\bf x}$. We refer to the coordinates given by (\ref{eq:Fx0}) as the {\it $\lambda$-rescaled exponential coordinates}. To recover the manifold $M$ from the deformed manifold $\M$, we consider the projection map
\begin{align*}
\hat \pi\colon \ \M & \bui{\longrightarrow}{p_1} M\times\R_{\geq 0} \bui{\longrightarrow}{\pi} M,\\
(x, \lambda)& \longmapsto (x, \lambda) \longmapsto x \quad {\rm if} \ \lambda>0, \\
({\mathbf x},0) & \longmapsto (p,0) \longmapsto p.
\end{align*}

With $\M$ endowed with the smooth structure described above, the map $\hat\pi$ is smooth allowing to pull-back the geometry on $M$ to $\M$. For any section $s$ of a vector bundle $E$ over $M$, its pull-back is a section of the pull-back bundle $\E:=\hat \pi^* E\subset \M\times E$ over $\M$ given by
\[
(\hat\pi^* s)(x,\lambda)=s(x),\qquad \forall \lambda>0 ,\quad \forall x\in M
\]
and
\[
(\hat\pi^* s)({\mathbf x},0)=s(p),\qquad \forall {\mathbf x}\in T_pM.
\]

In particular, the tangent bundle $TM\to M$ is pulled back to
\begin{gather*}
\hat \pi^* TM=\{(x,\lambda,y,u)\in M\times\R_+\times TM\mid x=y\}\\
\hphantom{\hat \pi^* TM=}{}
\cup \{({\bf x},0,y,u)\in T_pM\times \{0\}\times TM\mid p=y\}.
\end{gather*}

Also, the local diffeomorphism (\ref{eq:Fx0}) induces the isomorphism of vector bundles (see \cite[Remark~3.4\,(e)]{DS2} and \cite[pp.~67--68]{F})
\begin{align*}
	(TM\times\R_+)\cup (T_pM\times \{0\})&\longrightarrow T\M,\\
	 (x, u, \lambda)&\longmapsto (x, \lambda u= h_\lambda(u), \lambda) \quad {\rm if}\ \lambda>0,\\
	 (\mathbf{x},0)&\longmapsto \mathbf{x}\in T_{p}M \quad {\rm if}\ \lambda=0.
\end{align*}
 Now, in the local exponential chart (\ref{eq:Fx0}) of $\M$, the pull-back of a section $s$ on $E$ is the map $\hat\pi^*s\circ\Exp_p\colon \B_{p,r}\longrightarrow \E$ that can be read as
\[(\hat\pi^*s\circ\Exp_p)({\bf x},\lambda)=(s\circ \hat\pi)({\rm exp}_p(\lambda {\bf x}),\lambda)=s({\rm exp}_p(\lambda {\bf x}))\]
for any $\lambda>0$ and ${\bf x}\in B_{r/
\lambda}$. Also, on the remaining part of $\B_{p,r}$, we have
\[(\hat\pi^*s\circ\Exp_p)({\bf x},0)=(s\circ\hat\pi)({\bf x},0)=s(p).\]
Therefore, by taking ${\bf x}\in B_r\subset B_{r/
\lambda}$ for $\lambda>0$ small enough and identifying it with the point $x:={\rm exp}_p{\bf x}\in U_p$, we write that
\begin{equation*}%\label{eq:pistars}
(\hat \pi^* s)(x,\lambda)\overset{\text{\bf Exp}_p}{=}(s\circ \mathfrak{f}_\lambda)(x),
 \end{equation*}
for $x\in U_p$.

\subsection[Tensor bundles pulled back by hat pi]{Tensor bundles pulled back by $\boldsymbol{\hat\pi}$} %\label{sec:defgeom}
 Coming back to the deformation to the normal cone, the tensor bundle $T_r^qM:=TM^{\otimes q}\otimes T^*M^{\otimes r}$ is pulled back to $\hat \pi^* T^q_rM \longrightarrow \M$ and a tensor field $t$ written in a normal geodesic coordinates chart with coordinates $X$ at a point $x\in U_p$ as
\begin{equation}\label{eq:tensor}
t(x)=\sum t_{j_1\dots j_r}^{i_1 \dots i_q}\frac{\partial}{\partial x^{i_1}} \otimes\dots\otimes \frac{\partial}{\partial x^{i_q}} \otimes {\rm d}x^{j_1}\otimes\dots\otimes {\rm d}x^{j_r}\bigg|_x
\end{equation}
is pulled back to
\begin{align*}%\label{eq:pistar}
&(\hat \pi^* t)(x,\lambda) \overset{\text{\bf Exp}_p}{=} t\circ \mathfrak{f}_\lambda(x)\nonumber\\
&\qquad{}=\sum\! \bigl(t_{j_1\dots j_r}^{i_1 \dots i_q}\circ \mathfrak{f}_\lambda\bigr)\! \biggl(\frac{\partial}{\partial x^{i_1}}\circ \mathfrak{f}_\lambda\biggr) \otimes\dots\otimes \biggl(\frac{\partial}{\partial x^{i_q}}\circ \mathfrak{f}_\lambda\biggr) \otimes \bigl({\rm d}x^{j_1}\circ \mathfrak{f}_\lambda\bigr) \otimes\dots\otimes \bigl({\rm d}x^{j_r}\circ \mathfrak{f}_\lambda\bigr)\biggl|_x,
\end{align*}
for small enough $\lambda\geq 0$. Combining \eqref{eq:derivativedifferent} with (\ref{eq:getzlerrescaling}), we deduce that
\begin{equation*}
(\hat \pi^* t)(x,\lambda)\overset{\text{\bf Exp}_p}{=}\lambda^{q-r}(\mathfrak{f}_\lambda^*t)(x),
\end{equation*}
for any small enough $\lambda>0$ and $(\hat \pi^* t)(x,0)\overset{\text{\bf Exp}_p}{=}t(p)$, for $\lambda=0$. Specialising to $q=0$ and $r=2$, yields that the local description of the pull-back of the metric $g$ on $M$, viewed as a covariant two tensor is
\begin{equation}\label{eq:pistarg}
(\hat \pi^* g)(x,\lambda)\overset{\text{\bf Exp}_p}{=}\lambda^{-2} ({\mathfrak f}_\lambda^*g)(x)
\end{equation}
for small enough $\lambda>0$ and it is $g(p)$ for $\lambda=0$. It is therefore natural to introduce \begin{equation}\label{eq:glambda}g_\lambda:=\lambda^{-2} \mathfrak{f}_\lambda^* g,
\end{equation}
so that at any point in $U_p$, we have
$\left(g_\lambda\right)_{ij}(x)=g_{ij}(\mathfrak{f}_\lambda(x)).$ As a consequence of the last identity and with the help of the Koszul formula, the Christoffel symbols $\Gamma_{i j}^k (\cdot, g):=g\bigl(\nabla_{\frac{\partial}{\partial x^i}} \frac{\partial}{\partial x^j}, \frac{\partial}{\partial x^k}\bigr)$ satisfy for any small enough positive $\lambda$
%\label{eq:symbchriglamda}
\begin{equation*}
 \Gamma_{i j}^k (\cdot, g_\lambda) =\lambda \Gamma_{i j}^k (\mathfrak{f}_\lambda(\cdot),g).
 \end{equation*}
Similarly, the Christoffel symbols $\tilde\Gamma_{ls}^t(\cdot,g) :=g(\nabla_{e_l}e_s,e_t)$ read in an orthonormal frame obtained by parallel transport along the geodesic curves, satisfy
\begin{equation}\label{eq:symbchriglamdabis}
\tilde\Gamma_{ls}^t(\cdot,g_\lambda) =\lambda \tilde\Gamma_{ls}^t( \mathfrak{f}_\lambda(\cdot), g).
\end{equation}

\section{Geometric differential operators}
In this section, we define the notion of geometric polynomials with respect to a given metric, as smooth sections (Definition~\ref{defn:geometricpolE}) of a given vector bundle in terms of the corresponding vielbeins (see Appendix~\ref{sec:appendix1}). To these polynomials, we assign an order called Gilkey order, inspired by Gilkey's ``order of jets" in the context of his invariance theory \cite[Section~2.4]{G}, see also~\mbox{\cite[Section~3]{MP}}, both of which use jets of metrics. Whereas geometric polynomials are defined in terms of the jets of the vielbeins and hence of the metric tensor, the Gilkey order does not depend on the choice of metric. We call a differential operator geometric if its coefficients written in a local trivialisation are geometric polynomials (Definition~\ref{defn:geometricdiffop}). In Proposition~\ref{pro:flambdas}, we show that a geometric polynomial with respect to $g$ transforms under a contraction $\mathfrak f_\lambda$ to a~geometric polynomial with respect to $g_\lambda$. In Proposition~\ref{pro:geometricopes}, we show a similar property for a~geometric differential operator.

\subsection{Valuation of local sections}
Let us recall some basic facts on the jets of a vector bundle. Given any vector bundle $(E,\pi, M)$ where $\pi\colon E\to M$ is the orthogonal projection, we let $\Gamma(E):=C^\infty(M,E)$ be the vector space of sections of $E$ and $\Gamma_p(E)$ be the stalk\footnote{Let $S_U$ denote the set of local sections of $E$ defined on an open subset $U$ of $M$ containing $p$. The stalk of local sections at the point $p$ is the set of $\sim$ equivalence classes where for two elements $s_U$ in $S_U$ and $s_V$ in $S_V$, $s_U\sim s_V$ if and only $s_U$ and $s_V$ coincide in some neighborhood of $p$.} of local sections at a point $p$. Two local sections~$s$ and~$s'$ in~$\Gamma_p(E)$ have the same $r$-jet ($r\in \mathbb{Z}_+$) at $p$ if
\[(D^{\gamma}s) \vert _p=(D^{\gamma}s') \vert _p\]
for any multiindex $\gamma$ such that $0\leq |\gamma|\leq r$. The relation
\[s\sim s'\iff s\ \text{and}\ s'\ \text{have the same $r$-jet at $p$}\]
defines an equivalence relation and we denote by $j_p^rs$ the equivalence class of $s$. The integer $r$ is called the order of the jet. The set
\[J^r(E):=\big\{j_p^rs\mid p\in M,\, s\in \Gamma_p(E)\big\}\]
is a manifold, called the $r$-th jet manifold of $\pi$. The triple $(J^r(E),\pi_r,M)$ is a fiber bundle where $\pi_r\colon J^r(E)\to M$; $j_p^rs\mapsto p$ and, in local coordinates,
\[j_p^rs=(s(p),D^{\gamma}s|_p; 1\leq |\gamma|\leq r),\]
which can be locally represented by the polynomial $\sum_{|\gamma|\leq r} D^{|\gamma|} s|_p X^\gamma$ (here in the variable $X$).
The reference vector bundle $E$ will often be implicit only when needed shall we mention it.
\begin{defn} Given a normal geodesic coordinate system $X=\bigl(x^1,\dots,x^n\bigr)$ at a point $p$ and for a non negative integer $r$, the $r$-valuation of a local section $s\in\Gamma_p(E)$ is defined by
\[{\rm val}^r_{X,p}(s)=\min \big\{ \vert\gamma\vert\leq r, D_X^{\gamma}s \big \vert _p\neq 0\big\}\]
with the notation of \eqref{eq:dgammax} provided such a minimum exists. Otherwise, following the usual convention we set ${\rm val}^r_{X,p}(s)=+\infty$.
Correspondingly, we define the valuation of $s$ as being
\begin{equation}\label{eq:valp}
{\rm val}_{X,p}(s)=\min _{r\in \Z_{\geq 0}} {\rm val}^r_{X, p}(s)=\min \big\{\vert\gamma\vert\mid D^{\gamma}s \big \vert _p\neq 0\big\} \in [0,+\infty].
\end{equation}
\end{defn}

\begin{ex} We choose $E=T^*M\otimes T^*M$, and view $g$ as a section of $E$ trivialised above~$U_p$ by means of normal geodesic coordinates on $U_p$. In that trivialisation, the expansion of the metric around a point $p$ is given by
\begin{equation}\label{eq:metricnormalcoord}
g_{ij}(x)=\delta_{ij}-\frac{1}{3}R_{ikl j}\bigg\vert_p x^k x^l-\frac{1}{6} R_{ikl j;m}\bigg\vert_p x^k x^l x^m +O\bigl(|x|^4\bigr),
\end{equation}
where $R_{iklj;m}=(\nabla R)_{m iklj}.$ Therefore, in this trivialisation, the valuation of $g-{\rm Id}$ is at least~$2$. In contrast, in the trivialisation of $E$ obtained by parallel transport, the valuation of $g-{\rm Id}$ is~$+\infty$.

Similarly the expansion of the inverse is given by
 \begin{equation}\label{eq:metricnormalcoordbis}
 g^{ij}(x) =\delta_{ij}+\frac{1}{3}R_{ikl j}\bigg\vert_p x^k x^l+\frac{1}{6} R_{ikl j;t}^g\bigg\vert_px^k x^l x^t +O\bigl(|x|^4\bigr),
 \end{equation}
 so that, in these coordinates and with a slight abuse of notation, the valuation of $g^{-1}-{\rm Id}$ is at least $2$. Combining equations~(\ref{eq:BBt}) with (\ref{eq:metricnormalcoord}) (resp.\ (\ref{eq:AAt}) with (\ref{eq:metricnormalcoordbis})) yields the following expansions \cite[equation~(11)]{AGHM}
\begin{equation*}%\label{eq:Taya}
a_i^l(x, g)=\delta_{il}-\frac{1}{6}R_{ijk l}( p) x^j x^k-\frac{1}{12}\nabla_t R_{ijk l}x^j x^k x^t+O\bigl(|x|^4\bigr)\end{equation*}
 and
 \begin{equation*}%\label{eq:Tayb}
 b_l^i(x, g)=\delta_{il}+\frac{1}{6}R_{ljk i}( p) x^j x^k+\frac{1}{12}\nabla_t R_{ljk i}x^j x^k x^t+O\bigl(|x|^4\bigr).
 \end{equation*}
Hence, in the same way as before, the valuation of $A-{\rm Id}$ (resp.\ $B-{\rm Id}$, see Appendix~\ref{sec:appendix1}) is at least $2$ as well.

Finally, using the Koszul formula combining with (\ref{eq:metricnormalcoord}) and the properties of the curvature operator, the Christoffel symbols $\Gamma_{ij}^k (\cdot, g)=g\bigl(\nabla_{\frac{\partial}{\partial x^i}} \frac{\partial}{\partial x^j}, \frac{\partial}{\partial x^k}\bigr)$ have the following Taylor expansion at point $p$ in the normal geodesic coordinates
\[\Gamma_{ij}^k(x,g)=\frac{1}{3}(R_{i k l j}(p)+R_{i l k j}(p)) x^l+ O\bigl(\vert x\vert^2\bigr).\]
Also, the Christoffel symbols in an orthonormal frame $\tilde\Gamma_{ls}^t(\cdot,g)=g(\nabla_{e_l}e_s,e_t)$ have a similar Taylor expansion
\begin{equation}\label{eq:Gammatilde}
\tilde\Gamma_{ls}^t(x,g)=-\frac{1}{2}R_{li st}(p) x^i+O\bigl(\vert x\vert^2\bigr),
\end{equation}
which shows that both Christoffel symbols have valuation at least $1$.
\end{ex}

\subsection{Polynomial expressions in the jets of the vielbeins}

We consider a rank $k$ vector bundle $E\to M$ equipped with an affine connection. We trivialise the bundle $E$ over an exponential neighborhood $U_p$ of $p$ using geodesic normal coordinates $\bigl(x^1, \dots, x^n\bigr)$ at a point $x$ in $U_p$ by identifying the fibre $E_x$ above $x=\exp_p( {\bf x})\in U_p$ with the fibre $E_p$ at point $p$ via the parallel transport along geodesics $c(t)=\exp_p(t \bf x)$, ${\bf x}\in T_pM$. We fix a basis $(s_1(p), \dots, s_k(p))$ of $E_p$, which is then transported to $(s_1(x,g), \dots, s_k(x, g))$. In this trivialisation, sections of $E$ may be viewed as smooth functions on $U_p$ with valued in the fixed fibre $E_p$.
\begin{defn}\label{defn:geometricpolE}
We call a local section $s$ of $E$ over $U_p$ a {\it geometric monomial $($resp.\ polynomial$)$} with respect to some metric $g$, if when $s=\sum_{j=1}^k \alpha_j s_j$ is written in the local trivialisation $s_j(\cdot, g)$, $j=1, \dots, k$ of $E$ above $U_p$, the coordinates $\alpha_j(\cdot,g)$ are monomials (resp.\ polynomials) in the jets of vielbeins $A_p(\cdot,g)$ and $B_p(\cdot,g)$ for the metric $g$ (resp.\ linear combinations of monomials), namely if they are (resp.\ linear combinations of) expressions of the form
\begin{equation}\label{eq:polynomialsvielbeins}
\prod_{q=1}^{S_j} D^{\beta_{q}^j}\bigl((a_j)_{i_q}^{t_q}(\cdot,g)\bigr) D^{\gamma_q^j}\bigl((b_j)^{l_q}_{n_q}(\cdot,g)\bigr),
\end{equation}
such that $\sum_{q=1}^{S_j}\big\vert \beta_q^j\big\vert +\big\vert \gamma_q^j\big\vert$ is independent of $j$. In this case, we shall write $ s(\cdot,g)$ for an expression of the type (\ref{eq:polynomialsvielbeins}) and call ${\rm ord}^{\rm Gil}( s):=\sum_{q=1}^{S_j}\big\vert \beta_q^j\big\vert +\big\vert \gamma_q^j\big\vert$ its {\it Gilkey order}.
\end{defn}

One observes that the notion of geometric polynomial and its Gilkey order is invariant under transformations $g\mapsto f^*({\rm e}^\varphi g)$ of the metric $g$, where $f$ is a diffeomorphism on $M$ and $\varphi $ is a~smooth function on $M$.

\begin{rk} \label{lem:subbundlemetric}	When $E$ is a subbundle of the tensor bundle, we can alternatively trivialise it over the exponential neighborhood $U_p$ of $p$ using geodesic normal coordinates $\bigl(x^1, \dots, x^n\bigr)$ at a point in $U_p$.
By (\ref{eq:viedbein}), we have $\frac{ \partial}{\partial x^i}=\sum_{\ell=1}^n a_i^\ell (\cdot, g) e_\ell(\cdot,g)$ and ${\rm d}x^i=\sum_{m=1}^n b_l^i(\cdot, g) {\rm e}^l(\cdot,g) $, where $( e_1(\cdot, g), \dots, e_n(\cdot, g))$ is the basis of $TM$ obtained by parallel transport of some (fixed) orthonormal basis of $ T_pM$. Inserting these relations in
(\ref{eq:tensor}) yields an expression of
\[
\mathbf t(\cdot)=\sum t_{j_1\dots j_r}^{i_1 \dots i_q} P_{i_1\dots i_q }^{\ell_1\dots \ell_q} Q^{j_1\dots j_q }_{m_1\dots m_r} e_{\ell_1}(\cdot, g)\otimes\dots\otimes e_{\ell_q}(\cdot, g)\otimes {\rm e}^{m_1}(\cdot, g)\otimes \dots \otimes {\rm e}^{m_r}(\cdot, g),
\]
where $P_{i_1\dots i_q }^{\ell_1\dots \ell_q}$ and $Q^{j_1\dots j_q }_{m_1\dots m_r} $ are linear combinations of expressions of the form (\ref{eq:polynomialsvielbeins}). Thus, the coordinates $ t_{j_1\dots j_r}^{i_1 \dots i_q} $ of ${\mathbf t}$ in
(\ref{eq:tensor}) are linear combinations of expressions of the form (\ref{eq:polynomialsvielbeins}) if and only if its coordinates $ \tilde t_{m_1\dots m_r}^{\ell_1 \dots \ell_q} $ in the basis $ e_{\ell_1}(\cdot, g)\otimes\dots\otimes e_{\ell_q}(\cdot, g)\otimes {\rm e}^{m_1}(\cdot, g)\otimes \dots \otimes {\rm e}^{m_r}(\cdot, g)$ are also linear combinations of expressions of the form (\ref{eq:polynomialsvielbeins}).
Consequently, we can use either trivialisation in this case.
\end{rk}

Here are first examples of geometric polynomials.
\begin{ex}\label{ex:geompolTM}\quad
\begin{enumerate}\itemsep=0pt
\item Take $E=T^*M\otimes_s T^*M$. The metric $g$, which is a local section of $E$, is a geometric monomial with respect to the metric $g$ of Gilkey order zero since its coordinates in the basis ${\rm d}x^i \otimes {\rm d}x^j$ induced by the normal geodesic coordinates $x^1, \dots, x^n$ read
$g_{ij}(\cdot)=	\sum_{l=1}^n a_i^l(\cdot, g)a_j^l(\cdot, g)$ (see equation~\eqref{eq:AAt}). So is its inverse $g^{-1}$ a geometric monomial of Gilkey order zero since
$ g^{ij}(\cdot)=\sum_{l=1}^n b_l^i(\cdot,g) b_l^j(\cdot,g)$ (see equation~\eqref{eq:BBt}).
\item Take $E= T^*M\otimes T^*M \otimes TM$. The Christoffel symbol which is a local section of $E$, is a~geometric polynomial with respect to the metric $g$ since its coordinates $\Gamma_{ij}^k(\cdot,g)$ in the basis ${\rm d}x^i \otimes {\rm d}x^j\otimes \frac{{\rm d}}{{\rm d}x^k}$ induced by the normal geodesic coordinates $x^1, \dots, x^n$ read by Koszul's formula
\begin{equation}\label{eq:Koszul}\Gamma_{i j}^k(\cdot,g)=\sum_{l=1}^n g^{k l}(\cdot)\left(\partial_{x^i}(g_{jl}(\cdot))+\partial_{x^j}(g_{il}(\cdot))-\partial_{x^l}(g_{i j}(\cdot))\right),
\end{equation}
is a polynomial in the jets of vielbeins of Gilkey order one.
	
\item Similarly, the Christoffel symbols $\tilde\Gamma_{ls}^t(\cdot,g)=g(\nabla_{e_l}e_s,e_t)$ written in the orthonormal frame $(e_1(\cdot, g), \dots, e_k(\cdot, g))$ obtained by parallel transport as in Remark~\ref{lem:subbundlemetric} read as (use Einstein convention)
\begin{equation}\label{eq:relationschristofellsymbols}
\tilde\Gamma_{l s}^t (\cdot, g)=b_l^i(\cdot, g) b_t^j(\cdot, g) b_s^k(\cdot, g) \Gamma_{i k}^j (\cdot, g)+ b_l^i(\cdot, g) b_t^j(\cdot, g) \partial_{x_i}\bigl(b_t^k(\cdot, g)\bigr) g_{k j}(\cdot),
\end{equation}
are polynomials in the jets of the vielbeins of Gilkey order one.
\end{enumerate}
\end{ex}

\begin{rk}
Since jets are compatible with composition and differentiation, geometric monomials form an algebra stable under differentiation.
\end{rk}
\begin{rk}
	Due to equations~\eqref{eq:AAt} and \eqref{eq:BBt} which relate the metric to the vielbeins, the class of polynomials we single out in Example~\ref{ex:geompolTM},
	is consistent with the classes of polynomials in the jets of the metric considered in \cite{ABP}, \cite[Theorem 1.2]{E} and \cite[equation~(2.4.3)]{G}. There, the polynomials depend on the metric tensor, its inverse -- or its inverse determinant, see \cite[formula, item~1, p.~282]{ABP} -- and the derivatives of the metric tensor.
\end{rk}

\subsection{Geometric operators} In this subsection, we define geometric differential operators on vector bundles based on the definition of geometric polynomials.

\begin{defn}\label{defn:geometricdiffop}
 Let $E$ be a vector bundle over $M$ of finite rank equipped with an affine connection. We call a differential operator $P= \sum_{\vert \gamma\vert \leq m} P_\gamma D^\gamma$ in $ {\rm Diff}(M, E)$ of order $m$ {\it geometric} with respect to a metric $g$ if its coefficients
 $P_\gamma (x) \in \operatorname{End}(E_x)$, written in the basis obtained by parallel transport of some fixed basis of $E_p$, are geometric polynomials in the jets at $x$ of vielbeins such that for all $\gamma$
\begin{equation}\label{eq:geomopGi}
		{\rm ord}^{\rm Gi}(P_\gamma(x))={\rm ord}(P)- |\gamma|.
\end{equation}
\end{defn}
\begin{rk} As for geometric sections, when $E$ is a subbundle of the tensor bundle, we can alternatively trivialise it over the exponential neighborhood $U_p$ of $p$ using geodesic normal coordinates $\bigl(x^1, \dots, x^n\bigr)$ at a point in $U_p$.
\end{rk}

 \begin{ex}\label{ex:nablaXdiffgeom}
 For $E=\Lambda T^*M$, resp.\ $E=\Sigma M$, for any $X\in TM$,
 the covariant differentiation~$\nabla_X$ defines a geometric operator with respect to $g$ of order $1$.
 \begin{enumerate}\itemsep=0pt
\item When $E=\Lambda T^*M$, we express the covariant derivative on a differential form $\alpha=\sum_{I}\alpha_{I} {\rm d}x^I$ of degree $k$ in normal geodesic coordinates $\bigl(x^1, \dots, x^n\bigr)$ around $p\in M$ as follows (here $I=\{i_1<\dots<i_k\}$)
 	\begin{gather*}
 	 	\nabla_{\frac{\partial}{\partial x^i}}\alpha
= \sum_{I}\biggl({\frac{\partial}{\partial x^i}}\alpha_{I}\biggr) {\rm d}x^I\\
\hphantom{\nabla_{\frac{\partial}{\partial x^i}}\alpha =}{}
 + \sum_{t,I}\alpha_{I}\Biggl(\sum_{s,l=1}^n g\bigl(\nabla_{\frac{\partial}{\partial x^i}} {\rm d}x^{i_s},{\rm d}x^l\bigr)g_{tl}(\cdot)\Biggr) {\rm d}x^{i_1}\wedge\dots\wedge \underbrace{{\rm d}x^t}_{s^{\rm th}\text{{\rm -slot}}}\wedge\dots\wedge {\rm d}x^{i_k}.
 	\end{gather*}
This shows that $\nabla_{\frac{\partial}{\partial x^i}}$ is a geometric differential operator with respect to the metric $g$ of order $1$ whose zero-th order part $\sum_{m,l=1}^n g\bigl(\nabla_{\frac{\partial}{\partial x^i}} {\rm d}x^{i_s},{\rm d}x^l\bigr)g_{tl}(\cdot)$ has coefficients given by linear combinations of monomials (\ref{eq:polynomialsvielbeins}) with Gilkey order $1$.
 \item When $E=\Sigma M$ is equipped with the spin connection induced by the Levi-Civita connection, the corresponding $\operatorname{End}(\Sigma_p M)$-valued functions
\[{\rm e}^i \cdot_g\colon \ \sigma \mapsto {\rm e}^i \cdot_g \sigma,\qquad i=1, \dots, n\]
on $U_p$ are constant along the geodesics and hence in the trivialisation induced by parallel transport \cite[Lemma 4.14]{BGV}. In the normal geodesic coordinates $x^1, \dots, x^n$ on $U_p$, the Clifford multiplication operators ${\rm d}x^i \cdot_g$ are geometric operators since by (\ref{eq:viedbein}) they read
 $ {\rm d}x^i 	\cdot_g =\sum_{l=1}^n b_l^i(\cdot, g) {\rm e}^l \cdot_g $.

\item When $E=\Sigma M$, the spinorial connection $\nabla_{\frac{\partial}{\partial x^i}}$ acting on smooth functions from $U_p$ to $E_p$ reads
\begin{equation}\label{eq:connspin}
\nabla_{\frac{\partial}{\partial x^i}} =\frac{\partial}{\partial x^i} +\frac{1}{4}\sum_{s,t}\underbrace{g\bigl(\nabla_{\frac{\partial}{\partial x^i}} e_s,e_t\bigr)}_{\widetilde\Gamma_{i s }^t(\cdot,g)} {\rm e}^s\cdot_g {\rm e}^t\cdot_g
\end{equation}
and therefore defines a geometric differential operator with respect to $g$ of order $1$. Indeed, the coefficients $\widetilde\Gamma_{i s}^t(\cdot,g)$ are smooth real functions from $U_p$ and, by (\ref{eq:relationschristofellsymbols}), are polynomials in the jets of the vielbeins of Gilkey order $1$.
\end{enumerate}
\end{ex}

\begin{prop}\label{prop:productgeom}
The product of geometric differential operators with respect to
the metric $g$ is a geometric differential operator.
\end{prop}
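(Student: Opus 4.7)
The plan is to verify directly that the coefficients of the composition satisfy both requirements of Definition~\ref{defn:geometricdiffop}, namely that they are geometric polynomials and that their Gilkey orders obey the compatibility relation \eqref{eq:geomopGi}. The strategy rests on two structural facts already established (or immediate from the preceding remark): geometric polynomials with respect to a fixed metric $g$ form a graded algebra, with grading given by the Gilkey order, and they are stable under the operators $D^\beta$, which raise the Gilkey order by $|\beta|$. This graded-algebra picture is precisely what is needed to match the compatibility condition $\mathrm{ord}^{\rm Gi}(P_\gamma)=\mathrm{ord}(P)-|\gamma|$ after composition.

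Concretely, I would fix two geometric differential operators $P=\sum_{|\gamma|\leq m}P_\gamma D^\gamma$ and $Q=\sum_{|\delta|\leq m'}Q_\delta D^\delta$ on $E$, write everything in the basis of $E|_{U_p}$ obtained by parallel transport along geodesics, and expand the composition via the Leibniz rule:
\[
P\circ Q=\sum_{|\gamma|\leq m}\sum_{|\delta|\leq m'}\sum_{\beta\leq \gamma}\binom{\gamma}{\beta}P_\gamma\, D^\beta(Q_\delta)\, D^{\gamma-\beta+\delta}.
\]
Collecting terms by the resulting multiindex $\alpha=\gamma-\beta+\delta$ of total length $|\alpha|=|\gamma|+|\delta|-|\beta|$, the coefficient of $D^\alpha$ in $PQ$ is a finite sum of expressions of the form $P_\gamma\, D^\beta(Q_\delta)$ (composed as endomorphisms of $E_p$ in the chosen trivialisation).

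The main verification is then twofold. First, each factor $D^\beta(Q_\delta)$ is a geometric polynomial with respect to $g$ (stability under differentiation), and the entrywise product $P_\gamma\cdot D^\beta(Q_\delta)$ of two geometric polynomials is again a geometric polynomial (algebra property). Second, the Gilkey orders add and differentiation shifts them linearly, so
\[
\mathrm{ord}^{\rm Gi}\bigl(P_\gamma\, D^\beta(Q_\delta)\bigr)=(m-|\gamma|)+|\beta|+(m'-|\delta|)=m+m'-|\alpha|,
\]
which is the same value for every triple $(\gamma,\delta,\beta)$ contributing to $(PQ)_\alpha$. Hence the coefficient $(PQ)_\alpha$ is a homogeneous geometric polynomial of Gilkey order $\mathrm{ord}(PQ)-|\alpha|$, as required by \eqref{eq:geomopGi}.

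The only point that requires any care is bookkeeping: one must be sure that the Leibniz expansion preserves the homogeneity condition defining ``geometric polynomial'' (that is, all monomial summands in a single coefficient share the same total degree in the jets of the vielbeins), and this is where I would spend the most attention. Once one observes that every term in the expansion of $(PQ)_\alpha$ has the same total Gilkey weight $m+m'-|\alpha|$, the statement follows at once. In this sense there is no genuine obstacle beyond verifying that the additive grading by Gilkey order is respected by composition — an essentially formal consequence of the remark that the jets of vielbeins form an algebra stable under $D^\beta$.
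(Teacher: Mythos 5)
Your proof is correct and takes essentially the same approach as the paper: expand $PQ$ by the Leibniz rule, observe that geometric polynomials form an algebra stable under $D^\beta$ with $\mathrm{ord}^{\rm Gi}$ raised by $|\beta|$, and check that every summand contributing to $(PQ)_\alpha$ has Gilkey order $m+m'-|\alpha|$, verifying \eqref{eq:geomopGi}. The paper's proof is a slightly more compressed version of the same bookkeeping, writing the decomposition as $\gamma=\gamma_1+\gamma_2$ instead of $\beta\leq\gamma$.
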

\begin{proof}
Indeed, let
\[P:= \sum_{\vert\gamma\vert \leq {\rm ord} (P)} {P_{\gamma}D^{\gamma}}\qquad\text{and}\qquad Q:= \sum_{\vert\delta\vert \leq {\rm ord} (Q)} {Q_{\delta}D^{\delta}}\]
be two geometric operators. Their composition reads			
\[PQ= \sum_{\substack{\delta\\ |\gamma_1|+|\gamma_2|=|\gamma| } } P_{\gamma} (D^{\gamma_1} Q_{\delta}) D^{\gamma_2+\delta}.\]
We easily check that
\[{\rm ord}^{\rm Gi} (D^{\gamma_1} P_{\delta} )= {\rm ord}^{\rm Gi} (P_{\delta} )+\vert {\gamma_1}\vert,\]
so
\begin{align*}
{\rm ord}^{\rm Gi} (P_\gamma (D^{\gamma_1} Q_{\delta}) )+|\gamma_2+\delta|&{}= {\rm ord}^{\rm Gi}(P_\gamma)+{\rm ord}^{\rm Gi} (Q_{\delta} )+\vert {\gamma_1}\vert+\vert {\gamma_2}\vert+\vert {\delta}\vert\\
&{}= {\rm ord}^{\rm Gi}(P_\gamma)+{\rm ord}^{\rm Gi} (Q_{\delta} )+\vert {\gamma}\vert+\vert {\delta}\vert\\
&{}= {\rm ord} (P)+{\rm ord} (Q) = {\rm ord} (PQ).
\end{align*}
This finishes the proof.
\end{proof}
\begin{rk} In \cite{HY}, the authors assign a Getzler order to linear partial differential operators~$D$ acting on the smooth sections of the spinor bundle $\Sigma M$ over $M$ which can be expressed as a~finite sum of operators of the form $f\cdot D_1\cdots D_p$ where $f$ is a smooth function and each~$D_j$ is either a covariant derivative $\nabla_X$, or a Clifford multiplication operator $X\cdot_g$, or the identity operator. In our terminology, such an operator is geometric as the product of geometric operators~$\nabla_X$ and $X\cdot_g$ (see Example~\ref{ex:nablaXdiffgeom}). We will later see in Example~\ref{ex:Hodgelaplacian} that the exterior differential ${\rm d}$ is a geometric operator but not of the form $f\cdot D_1\cdots D_p$. Also, the notion of geometric operator generalises to classical pseudodifferential operators in requiring a condition similar to~(\ref{eq:geomopGi}) on the homogeneous components of the symbol, see \cite{MP}.
\end{rk}

In the following, we consider again a vector bundle $E$ over $M$ of rank $k$ equipped with a~connection $\nabla$.

\begin{lem} \label{ref:eq:patrgglambda} Let $E$ be a vector bundle over $M$ of rank $k$ and let $s(p)\in E_p$, for some fixed $p\in M$. We denote by $s(\cdot,g)$ the section in $\Gamma(E)$ obtained by parallel transport of $s(p)$ along the exponential curve $c(t)={\rm exp}_p(t{\bf x})$ corresponding to the metric $g$ with ${\bf x}\in B_r\subset T_pM$. We also denote by $\overline{s}(\cdot,g_\lambda)$ the section obtained by parallel transport of $s(p)$ along the exponential curve~$\gamma(t)$ corresponding to the metric $g_\lambda$. Then, we have
\[\overline{s}(\cdot,g_\lambda)=s(\mathfrak{f}_\lambda(\cdot),g).\]
\end{lem}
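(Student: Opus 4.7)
The plan is to exploit the identity $g_\lambda = \lambda^{-2}\,\mathfrak{f}_\lambda^* g$ from~\eqref{eq:glambda}: the metrics $g_\lambda$ and $\mathfrak{f}_\lambda^* g$ differ only by the \emph{constant} conformal factor $\lambda^{-2}$, so by Koszul's formula they share the same Levi-Civita connection and the same parametrised geodesics. Consequently $\mathfrak{f}_\lambda\colon (U_p,\mathfrak{f}_\lambda^*g)\to (U_p^\lambda,g)$ is a local isometry, whose canonical lift to the natural bundle $E$ (which is the case for the bundles relevant later, $E=\Lambda T^*M$ and $E=\Sigma M$) intertwines the two induced connections on~$E$, and therefore also the associated parallel transport operators.

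First I would establish the geodesic correspondence. Writing $\gamma(t)=\exp_p^{g_\lambda}(t\mathbf{y})$ for the $g_\lambda$-geodesic through $p$ with initial velocity $\mathbf{y}\in T_pM$, the isometry together with $\mathfrak{f}_\lambda(p)=p$ and the identity $(d\mathfrak{f}_\lambda)_p=\lambda\,\mathrm{Id}$ (a direct consequence of~\eqref{eq:flambdaUplambda}) yield
\[
\mathfrak{f}_\lambda(\gamma(t)) \;=\; \exp_p^g(t\lambda \mathbf{y}),
\]
so $\mathfrak{f}_\lambda\circ\gamma$ coincides (up to the affine reparametrisation $t\mapsto\lambda t$) with the $g$-geodesic $c$ of the lemma. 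Because the isometric lift of $\mathfrak{f}_\lambda$ intertwines the two connections on~$E$, the $g_\lambda$-parallel transport $\tau^{g_\lambda}_\gamma\colon E_p\to E_x$ corresponds to the $g$-parallel transport $\tau^g_{\mathfrak{f}_\lambda\circ\gamma}\colon E_p\to E_{\mathfrak{f}_\lambda(x)}$; applying both to the common initial datum $s(p)\in E_p$ and reading the outputs through the trivialisation of $E$ by parallel transport from~$E_p$ gives the asserted identity $\overline s(x,g_\lambda)=s(\mathfrak{f}_\lambda(x),g)$.

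The main obstacle is not substantive but bookkeeping: one must verify carefully that the constant conformal rescaling by $\lambda^{-2}$ leaves the Levi-Civita connection untouched (immediate from Koszul's formula, or alternatively directly from~\eqref{eq:symbchriglamdabis}), and that $\mathfrak{f}_\lambda$ lifts to a parallel bundle isomorphism of $E$ compatible with the parallel-transport trivialisations used to identify sections with $E_p$-valued maps. Once these are in place, the lemma reduces to the familiar fact that a local isometry sends parallel sections along a curve to parallel sections along its image.
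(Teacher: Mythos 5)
Your proof is correct and follows essentially the same route as the paper's: identify the radial $g_\lambda$-geodesics via $\mathfrak{f}_\lambda$, observe that $g_\lambda$ and $\mathfrak{f}_\lambda^*g$ share the same Levi--Civita connection (constant conformal factor), and invoke uniqueness of parallel transport with the common initial condition $s(p)$. You spell out more explicitly than the paper does why the pulled-back section is $g_\lambda$-parallel — by presenting $\mathfrak{f}_\lambda$ as a local isometry $(U_p,\mathfrak{f}_\lambda^*g)\to(U_p^\lambda,g)$ whose lift intertwines the induced connections — whereas the paper simply asserts this as a "direct consequence"; this extra clarity is welcome. One remark: your parenthetical restriction to natural bundles ($\Lambda T^*M$, $\Sigma M$) is appropriate and in fact needed — for a bundle with a metric-independent connection the $g$- and $g_\lambda$-parallel transports coincide (same radial curves, same connection), so the statement as written only has content when the connection on $E$ is induced by the metric; the paper's blanket "vector bundle of rank $k$" leaves this tacit.
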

\begin{proof}
First, we notice that if $c(t)={\rm exp}_p(t{\bf x})$ is the exponential curve corresponding to the metric $g$ with ${\bf x}\in B_r\subset T_pM$, then the curve $\gamma\colon I\to U_p$ given by
\[\gamma(t):=\mathfrak{f}_\lambda\circ c(t)={\rm exp}_p(t\lambda{\bf x})\] is the exponential curve associated with the metric $\mathfrak{f}_\lambda^*g$ as well for the metric $g_\lambda$.
The section~$s(\cdot,g)\circ\mathfrak{f}_\lambda$ is parallel along the curve $\gamma(t)$ as a direct consequence from the fact that $s(\cdot,g)$ is parallel on $E$ along the curve $c(t)$. Now the initial condition and the uniqueness of the parallel transport allow to deduce the result.
\end{proof}

\begin{prop} \label{pro:flambdas} Let $E$ be a vector bundle over $M$ of rank $k$. Let $s$ be a local section of $E$ which is a geometric monomial $($resp.\ polynomial$)$ with respect to the metric $g$ of Gilkey order ${\rm ord}^{\rm Gi}(s)$. The local section $\mathfrak{f}_\lambda^*s$ is a geometric monomial $($resp.\ polynomial$)$ with respect to the metric $g_\lambda$ of the same Gilkey order ${\rm ord}^{\rm Gi}(s)$.
\end{prop}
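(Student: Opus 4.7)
The plan is to verify that the coefficients of $\mathfrak{f}_\lambda^* s$, read in the $g_\lambda$-parallel trivialisation of $E$ around $p$, are again polynomial expressions of the form \eqref{eq:polynomialsvielbeins} in the jets of the vielbeins for $g_\lambda$, and that the total count of derivatives is preserved. I would split this into three steps: a vielbein transformation identity, a chain rule bookkeeping, and the assembly.

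First I would establish the key identity relating the vielbeins of $g_\lambda$ to those of $g$:
\[
a_i^l(x, g_\lambda) = a_i^l(\mathfrak{f}_\lambda(x), g), \qquad b_l^i(x, g_\lambda) = b_l^i(\mathfrak{f}_\lambda(x), g),
\]
for $x\in U_p$. This I would deduce by applying Lemma~\ref{ref:eq:patrgglambda} to the frame sections $e_l$: the $g_\lambda$-parallel frame $\overline{e_l}(\cdot, g_\lambda)$ coincides, in the coordinate basis $\partial/\partial x^i$, with $e_l(\mathfrak{f}_\lambda(\cdot), g)$. Expanding the coordinate vector $\partial/\partial x^i$ in either frame and comparing coefficients then gives the stated identity for $a$, while the identity for $b$ follows since $B = A^{-1}$.

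Next I would combine this with the chain rule applied to $\mathfrak{f}_\lambda(x)=\lambda x$ in normal coordinates, which immediately yields
\[
D^\beta\bigl(a_i^l(\cdot, g_\lambda)\bigr)(x) = \lambda^{|\beta|} \bigl(D^\beta a_i^l\bigr)(\mathfrak{f}_\lambda(x), g),
\]
and analogously for $b$. Now take a geometric monomial $s$ whose coordinates in the $g$-parallel trivialisation are of the form \eqref{eq:polynomialsvielbeins}. By Lemma~\ref{ref:eq:patrgglambda} applied to the basis sections $s_j$, the pulled-back section $\mathfrak{f}_\lambda^* s$ has coordinates $\alpha_j\circ \mathfrak{f}_\lambda$ in the $g_\lambda$-parallel trivialisation. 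Inverting the scaling formula term by term then gives
\[
\alpha_j(\mathfrak{f}_\lambda(x), g) = \lambda^{-{\rm ord}^{\rm Gi}(s)} \prod_{q=1}^{S_j} D^{\beta_q^j}\bigl((a_j)_{i_q}^{t_q}(\cdot, g_\lambda)\bigr)(x)\, D^{\gamma_q^j}\bigl((b_j)_{n_q}^{l_q}(\cdot, g_\lambda)\bigr)(x),
\]
which exhibits $\mathfrak{f}_\lambda^* s$ as a geometric monomial in the vielbeins of $g_\lambda$ of the same Gilkey order ${\rm ord}^{\rm Gi}(s)$, up to the multiplicative constant $\lambda^{-{\rm ord}^{\rm Gi}(s)}$. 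The polynomial case would then follow by linearity, since the scaling factor depends only on the common Gilkey order of the individual monomials making up the polynomial.

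The main obstacle is making the first step fully rigorous: it requires clean bookkeeping of the two different parallel transports (for $g$ and $g_\lambda$) and of the precise sense in which Lemma~\ref{ref:eq:patrgglambda} identifies the $g_\lambda$-parallel frame at $x$ with the $g$-parallel frame at $\mathfrak{f}_\lambda(x)$ through the coordinate basis, using that $\mathfrak{f}_\lambda$ is conformal with constant factor and hence sends $g_\lambda$-geodesics and $g_\lambda$-parallel transport to the corresponding objects for $g$. Once this compatibility is pinned down, Steps~2 and~3 reduce to a transparent application of the chain rule and the fact that jets are compatible with composition.
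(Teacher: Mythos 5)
Your proposal is correct and follows essentially the same route as the paper's proof: both rest on the vielbein transformation identity $a_i^l(\cdot,g_\lambda)=a_i^l(\mathfrak{f}_\lambda(\cdot),g)$, $b_l^i(\cdot,g_\lambda)=b_l^i(\mathfrak{f}_\lambda(\cdot),g)$ (which the paper isolates as Lemma~\ref{lem:vielbeincomp} in Appendix~\ref{sec:appendix1} and proves from Lemma~\ref{ref:eq:patrgglambda}, whereas you rederive it inline), then differentiate it via the chain rule for $\mathfrak{f}_\lambda(x)=\lambda x$ to produce the $\lambda^{{\rm ord}^{\rm Gi}(s)}$ scaling factor, and finally invoke Lemma~\ref{ref:eq:patrgglambda} for the parallel-frame sections $s_j$ to assemble $\mathfrak{f}_\lambda^*s=\lambda^{-{\rm ord}^{\rm Gi}(s)}\sum_j\alpha_j(\cdot,g_\lambda)\overline{s}_j(\cdot,g_\lambda)$. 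The only cosmetic difference is that you obtain the identity for $b$ from $B=A^{-1}$ rather than by a parallel argument; the bookkeeping and conclusion match the paper's.
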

\begin{proof} Since the local section $s$ is a geometric monomial with respect to the metric $g$, it can be written as $s(\cdot,g)=\sum_{j=1}^k \alpha_j(\cdot,g)s_j(\cdot,g)$ where $\alpha_j(\cdot,g)$ is a monomial in the jets of the vielbeins. The section $\mathfrak{f}_\lambda^*s$ is equal to $\mathfrak{f}_\lambda^*s=\sum_{j=1}^k \alpha_j(\mathfrak{f}_\lambda(\cdot),g)s_j(\mathfrak{f}_\lambda(\cdot),g)$. In order to express $\mathfrak{f}_\lambda^*s$ in terms of the metric $g_\lambda$, we first differentiate the relations from Lemma~\ref{lem:vielbeincomp} in Appendix~\ref{sec:appendix1},
\[
a_{i}^{l}(\cdot,g_\lambda)=a_{i}^{l}(\mathfrak{f}_\lambda(\cdot),g) \qquad\text{and}\qquad b_{l}^{i}(\cdot,g_\lambda)=b_{l}^{i}( {\mathfrak f}_\lambda(\cdot),g),
\]
to get that $\alpha_j(\cdot,g_\lambda)=\lambda^{{\rm ord}^{\rm Gi}(s)}\alpha_j(\mathfrak{f}_\lambda(\cdot),g)$. On the other hand, by Lemma~\ref{ref:eq:patrgglambda}, we have that $\overline{s}_j(\cdot,g_\lambda)=s_j(\mathfrak{f}_\lambda(\cdot),g)$ for all $j$. Therefore, we deduce that
\[\mathfrak{f}_\lambda^*s=\lambda^{-{\rm ord}^{\rm Gi}(s)}\sum_{j=1}^k \alpha_j(\cdot,g_\lambda)\overline{s}_j(\cdot,g_\lambda).\]
That means $\mathfrak{f}_\lambda^*s$ is a geometric monomial with respect to the metric $g_\lambda$ and that $s$ and $\mathfrak{f}_\lambda^*s$ have the same Gilkey order.
\end{proof}

As a direct consequence, we get the following result on geometric differential operators.
\begin{prop} \label{pro:geometricopes} Let $E$ be a vector bundle over $M$ of finite rank equipped with an affine connection. Let $P$ be a differential operator in $ {\rm Diff}(M, E)$ of order $m$ geometric with respect to the metric $g$. The differential operator $\mathfrak{f}_\lambda^\sharp P$ in $ {\rm Diff}(M, E)$ of order $m$ is geometric with respect to the metric $g_\lambda$.
\end{prop}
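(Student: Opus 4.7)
My plan is to reduce the claim to Proposition~\ref{pro:flambdas} applied to each coefficient of $P$ separately. Write $P = \sum_{|\gamma|\leq m} P_\gamma D^\gamma$ in the local trivialisation obtained by parallel transport with respect to $g$ in normal geodesic coordinates at $p$; by hypothesis each $P_\gamma$ is a geometric polynomial with values in $\operatorname{End}(E)$ of Gilkey order $m-|\gamma|$ with respect to $g$. The first step is to compute $\mathfrak{f}_\lambda^\sharp P$ explicitly. Applying the definition \eqref{eq:flambda} and using that in normal geodesic coordinates the change of variables $y=\mathfrak{f}_\lambda(x)=\lambda x$ satisfies \eqref{eq:derivativedifferent}, one obtains $D_Y^\gamma(\phi\circ \mathfrak{f}_\lambda^{-1})(\mathfrak{f}_\lambda(x))=\lambda^{-|\gamma|}(D_X^\gamma\phi)(x)$ for any local section $\phi$, whence
\[
\mathfrak{f}_\lambda^\sharp P = \sum_{|\gamma|\leq m} \lambda^{-|\gamma|}\,(\mathfrak{f}_\lambda^{*} P_\gamma)\, D^\gamma.
\]
The top-degree coefficient, for $|\gamma|=m$, is $\lambda^{-m}\mathfrak{f}_\lambda^{*}P_m$, which is nonzero for $\lambda>0$, so $\mathfrak{f}_\lambda^\sharp P$ still has order $m$.

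The second step is to identify each coefficient $(\mathfrak{f}_\lambda^\sharp P)_\gamma=\lambda^{-|\gamma|}\mathfrak{f}_\lambda^{*}P_\gamma$ as a geometric polynomial with respect to $g_\lambda$ of Gilkey order $m-|\gamma|$. Since $\operatorname{End}(E)$ is itself a finite-rank vector bundle equipped with the induced connection, and $P_\gamma$ is a geometric section of $\operatorname{End}(E)$ with respect to $g$ of Gilkey order $m-|\gamma|$, Proposition~\ref{pro:flambdas} applies directly and yields that $\mathfrak{f}_\lambda^{*}P_\gamma$ is geometric with respect to $g_\lambda$ of the same Gilkey order $m-|\gamma|$. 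Multiplication by the scalar $\lambda^{-|\gamma|}$ preserves this structure, since the Gilkey order records the total number of derivatives of the vielbeins appearing in the monomials and is insensitive to overall scalar prefactors. The compatibility condition \eqref{eq:geomopGi} is therefore satisfied, and $\mathfrak{f}_\lambda^\sharp P$ is geometric with respect to $g_\lambda$.

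The argument is essentially a direct corollary of Proposition~\ref{pro:flambdas}: once the explicit formula for $\mathfrak{f}_\lambda^\sharp(P_\gamma D^\gamma)$ is in hand, the rest is bookkeeping. The one subtle point, which I expect to be the chief obstacle, is the need to apply Proposition~\ref{pro:flambdas} to endomorphism-valued rather than scalar sections; this is legitimate because the trivialisation on $\operatorname{End}(E)$ by parallel transport is induced by the one on $E$, and Lemma~\ref{ref:eq:patrgglambda} precisely intertwines the $g$-parallel and $g_\lambda$-parallel transports via $\mathfrak{f}_\lambda$. No new technical ingredient is therefore needed beyond the results already established in this section.
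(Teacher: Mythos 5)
Your proof is correct and follows essentially the same route as the paper's: expand $P$ in the parallel-transport trivialisation, note that $\mathfrak{f}_\lambda^\sharp$ acts coefficient by coefficient, invoke Proposition~\ref{pro:flambdas} to conclude each pulled-back coefficient is geometric with respect to $g_\lambda$ of unchanged Gilkey order, and then check the compatibility condition~\eqref{eq:geomopGi}. You are marginally more explicit about the scalar factor $\lambda^{-|\gamma|}$ arising from $\mathfrak{f}_\lambda^\sharp D^\gamma$ and about applying Proposition~\ref{pro:flambdas} to the endomorphism bundle $\operatorname{End}(E)$, but both are bookkeeping details the paper handles implicitly.
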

\begin{proof} { If $P= \sum_{\vert \gamma\vert \leq m} P_\gamma D^\gamma$, then $\mathfrak{f}_\lambda^\sharp P= \sum_{\vert \gamma\vert \leq m} \mathfrak{f}_\lambda^* P_\gamma \mathfrak{f}_\lambda^\sharp D^\gamma$.
Since by assumption, the coefficients $ P_\gamma$ are geometric polynomials with respect to the metric $g$, it follows from the previous proposition that their pull-backs $\mathfrak{f}_\lambda^* P_\gamma $ are geometric polynomials with respect to the metric $g_\lambda$ with the same Gilkey order as those of the $P_\gamma$'s. Since the order of $\mathfrak{f}_\lambda^\sharp P$ coincides with that of~$P$, for any multi-index $\gamma$ we have
\[ {\rm ord}^{\rm Gi}(\mathfrak{f}_\lambda^* P_{\gamma})= {\rm ord}^{\rm Gi}(P_{\gamma})={\rm ord}(P)- |\gamma|={\rm ord}\bigl(\mathfrak{f}_\lambda^\sharp P\bigr)- |\gamma|,\]
so that $\mathfrak{f}_\lambda^\sharp P$ satisfies equation~(\ref{eq:geomopGi}).}
\end{proof}
 	
\section{Getzler rescaled geometric differential operators}

In this section, we focus on geometric differential operators $P$ in ${\rm Diff}(M,E)$ for $E=\Sigma M$, resp.\ $E=\Lambda T^*M$ given in Definition~\ref{defn:geometricdiffop}. We study their behaviour under the combined action of a~contraction ${\mathfrak f}_\lambda$ and the map $U_\lambda$ defined in \eqref{eq:actionulambda} as well as the limit as $\lambda$ tends to zero of the resulting operator. For that purpose we introduce the operators $\widetilde\Pp^{\rm Ge}|_{(\cdot,\lambda)}$ in \eqref{eq:PPlambdaGeLambda}, resp.\ in \eqref{eq:PPlambdaGeSigma}, where the superscript Ge stands for Getzler. We call $P$ rescalable if \smash{$\widetilde\Pp^{\rm Ge}|_{(\cdot,\lambda)}$} admits a limit~$\widetilde\Pp^{\lim} $ when $\lambda $ tends to zero. In Proposition~\ref{limitdiffope} in the case $E=\Lambda T^*M$, resp.\ in Proposition~\ref{prop:conditionrescalibilityClifford} in the case $E=\Sigma M$, we give a necessary and sufficient condition for $P$ to be rescalable and show that the coefficients of the limit operator $\widetilde\Pp^{\lim} $ are polynomials in the jets of the curvature tensor on~$M$. For $P$ in ${\rm Diff}(M,\Lambda T^*M)$, resp.\ in ${\rm Diff}(M,\Sigma M)$, we give in Theorem~\ref{thm:limPglambda}, resp.\ Corollary~\ref{cor:resclimClifford} (for an even order operator) a localisation formula similar to the one in Proposition~\ref{prop:reslim}. This time instead of the local residue form \smash{$\omega_{\log_\theta (\widetilde\Pp^{\lim}) }^{{ {\rm Res}}}$}, our localisation formula involves the local {$n$-form}~$\widetilde	 \omega_{\log_\theta (\widetilde\Pp^{\lim}) }^{{ {\rm Res}}}$.

In the same spirit as \eqref{eq:PPlambda} and \eqref{eq:PPlambdaflambda}, we set for $P\in {\rm Diff}(M, \Lambda T^* M)$ of order $m$
\begin{equation}\label{eq:PPlambdaGeLambda}
\Pp^{\rm Ge}\big|_{(\cdot,\lambda)} :=\lambda^mU_\lambda^\sharp P\big|_{(\cdot)} \qquad\text{and}\qquad \widetilde\Pp_\lambda^{\rm Ge}:=\mathfrak{f}_\lambda^\sharp \Pp^{\rm Ge},
\end{equation}
resp.\ for $P\in {\rm Diff}(M, \Sigma M)$
\begin{equation}\label{eq:PPlambdaGeSigma} \Pp^{\rm Ge}\big|_{(\cdot,\lambda)} :=\lambda^m \bigl(U_\lambda ^\sharp\circ c^g (P )\bigr)\big\vert_{(\cdot)} \qquad\text{and}\qquad \widetilde\Pp_\lambda^{\rm Ge}:=\mathfrak{f}_\lambda^\sharp \Pp^{\rm Ge},
\end{equation}
where $U_\lambda^\sharp P$ and $ c^g(P)$ are given in equations~\eqref{eq:ulambdap} and \eqref{eq:cgp}, respectively.

\begin{defn}
\label{defn:limitdiffope} We call a differential operator $P $ in ${\rm Diff} (M, E )$ with $E=\Lambda T^*M$, resp.\ $E=\Sigma M $ \emph{rescalable at a point $p$} if and only if \smash{$\widetilde \Pp_\lambda^{\rm Ge}\big\vert_{U_p}$} introduced in (\ref{eq:PPlambdaGeLambda}), {resp.\ in} (\ref{eq:PPlambdaGeSigma}) admits a limit~$\widetilde \Pp^{\lim}$ when $\lambda $ goes to zero.
\end{defn}

\begin{rk}%\label{rk:normalparalleltrivbis}
Note that rescalability is a local notion valid at a point, and is defined via a local normal geodesic coordinates.
\end{rk}
\begin{prop}%\label{prop:compbis}
Rescalable operators in ${\rm Diff} (M, E)$ at point $p$ for $E=\Lambda T^*M$, resp.\ $E=\Sigma M $, form a subalgebra.
\end{prop}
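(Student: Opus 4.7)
The plan is to verify the two defining properties of a subalgebra---closure under linear combinations and closure under composition---for the class of rescalable operators. The structural observation that drives everything is that the three operations involved in the definition of $\widetilde{\Pp}_\lambda^{\rm Ge}$ are all algebra homomorphisms on the relevant differential operator algebras. Indeed, $\mathfrak{f}_\lambda^\sharp$ is conjugation by the diffeomorphism $\mathfrak{f}_\lambda$; $U_\lambda^\sharp$ is, by \eqref{eq:actionulambda}, fibrewise conjugation by $U_\lambda$, and as $U_\lambda$ is a constant linear map in the parallel trivialisation, it commutes with $D^\gamma$, so $U_\lambda^\sharp$ extends as an algebra homomorphism of ${\rm Diff}(M,\Lambda T^*M)$; and in the spinor case, $c^g\colon\Cl(TM)\otimes\C\simeq\operatorname{End}(\Sigma M)\to\operatorname{End}(\Lambda T^*M)$ is the Clifford representation, whose coefficient matrices in the parallel orthonormal trivialisation are constant in $x$ (the metric components being $\delta_{ij}$), so $c^g$ also commutes with $D^\gamma$ and induces an algebra homomorphism at the level of differential operators.

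For closure under linear combinations, let $P_1, P_2$ be rescalable of orders $m_1\geq m_2$. Then $aP_1+bP_2$ is of order at most $m_1$, and a direct computation gives
\begin{equation*}
\widetilde{(aP_1+bP_2)}_\lambda^{\rm Ge}=a\,\widetilde{\Pp}_{1,\lambda}^{\rm Ge}+b\,\lambda^{m_1-m_2}\,\widetilde{\Pp}_{2,\lambda}^{\rm Ge},
\end{equation*}
which converges as $\lambda\to 0$ to $a\,\widetilde{\Pp}_1^{\lim}+b\,\widetilde{\Pp}_2^{\lim}$ when $m_1=m_2$ and to $a\,\widetilde{\Pp}_1^{\lim}$ when $m_1>m_2$. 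Hence the sum is rescalable.

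For closure under composition, the homomorphism properties of $\mathfrak{f}_\lambda^\sharp$ and $U_\lambda^\sharp$ (together with $c^g$ in the spinor case, which reduces that case to the form case) yield the key factorisation
\begin{equation*}
\widetilde{(P_1 P_2)}_\lambda^{\rm Ge}=\lambda^{m_1+m_2}\,\mathfrak{f}_\lambda^\sharp U_\lambda^\sharp(P_1P_2)=\bigl(\lambda^{m_1}\mathfrak{f}_\lambda^\sharp U_\lambda^\sharp P_1\bigr)\bigl(\lambda^{m_2}\mathfrak{f}_\lambda^\sharp U_\lambda^\sharp P_2\bigr)=\widetilde{\Pp}_{1,\lambda}^{\rm Ge}\,\widetilde{\Pp}_{2,\lambda}^{\rm Ge}.
\end{equation*}
Passing to the limit then gives that $P_1P_2$ is rescalable with $\widetilde{(P_1P_2)}^{\lim}=\widetilde{\Pp}_1^{\lim}\,\widetilde{\Pp}_2^{\lim}$.

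The only subtle point is justifying that composition is continuous for the topology in which the convergence $\widetilde{\Pp}_{i,\lambda}^{\rm Ge}\to\widetilde{\Pp}_i^{\lim}$ takes place. Because the coefficients of $\widetilde{\Pp}_{i,\lambda}^{\rm Ge}$ in the local trivialisation are, up to powers of $\lambda$, pullbacks under $\mathfrak{f}_\lambda$ of smooth functions on $U_p$, they converge in the $C^\infty$ topology on compact subsets of $U_p$ together with all their derivatives. Composition of differential operators of fixed orders is continuous for this topology, so the limit of the products equals the product of the limits, completing the proof.
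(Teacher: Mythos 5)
Your argument for closure under composition coincides with the paper's: the key factorisation
\[
\lambda^{m_1+m_2}\,\mathfrak{f}_\lambda^\sharp U_\lambda^\sharp(P_1P_2)=\bigl(\lambda^{m_1}\mathfrak{f}_\lambda^\sharp U_\lambda^\sharp P_1\bigr)\bigl(\lambda^{m_2}\mathfrak{f}_\lambda^\sharp U_\lambda^\sharp P_2\bigr)
\]
is exactly what the authors write, and your unpacking of why $\mathfrak{f}_\lambda^\sharp$, $U_\lambda^\sharp$ and $c^g$ are algebra morphisms (together with the remark on continuity of composition) is a more explicit rendering of their one-line justification ``the order is additive on products and the degree is additive on wedge products''. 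So for the multiplicative part the two proofs are the same.

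Where you go beyond the paper---and where a genuine gap appears---is the closure under linear combinations. Your displayed formula
\[
\widetilde{(aP_1+bP_2)}{}_\lambda^{\rm Ge}=a\,\widetilde{\Pp}_{1,\lambda}^{\rm Ge}+b\,\lambda^{m_1-m_2}\,\widetilde{\Pp}_{2,\lambda}^{\rm Ge}
\]
tacitly assumes that ${\rm ord}(aP_1+bP_2)=m_1$. The definition \eqref{eq:PPlambdaGeLambda}--\eqref{eq:PPlambdaGeSigma} uses the \emph{actual} order $m$ of the operator as the power of $\lambda$, so if $m_1=m_2$ and the leading symbols cancel, the correct formula acquires an extra factor $\lambda^{m-m_1}$ with $m<m_1$, and convergence as $\lambda\to 0$ is no longer automatic. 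This is not a purely hypothetical worry: with $P_1=\Delta+{\rm d}$ and $P_2=\Delta$, both of order~$2$, one checks that $\lambda^2\mathfrak{f}_\lambda^\sharp U_\lambda^\sharp {\rm d}=\lambda\,\widetilde{{\rm d}}_\lambda^{\rm Ge}$ tends to zero (the divergence of $\widetilde{{\rm d}}_\lambda^{\rm Ge}$ is only of order $\lambda^{-1}$, as follows from Example~\ref{ex:Hodgelaplacian} and Proposition~\ref{limitdiffope}), so $P_1$ and $P_2$ are both rescalable; yet $P_1-P_2={\rm d}$ is shown in Example~\ref{ex:Hodgelaplacian} not to be rescalable. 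To be fair, the paper's own proof silently omits the additive case and only verifies multiplicative closure, so your attempt to complete the argument is in the right spirit; but as written it does not hold in full generality, and you should either restrict to the situation where no order drop occurs (which is automatic for geometric operators of a fixed order, since condition~\eqref{eq:geomopGi} forbids mixing orders) or explicitly flag the cancellation case as an exception.

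Two small corrections to the justifications you offer: in the spinor case it is not the vanishing of $g_{ij}-\delta_{ij}$ that makes the matrices of $c^g$ constant in the parallel trivialisation, but the parallelism of the sections ${\rm e}^i\cdot_g$ along geodesics (\cite[Lemma~4.14]{BGV}, as the paper recalls in Example~\ref{ex:nablaXdiffgeom}); and the continuity argument you sketch at the end, while morally right, would need to be made precise in the same way the paper handles the analogous limit in Proposition~\ref{prop:reslim}, namely by appealing to continuity of composition on symbols of bounded order in the $C^\infty$ topology.
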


\begin{proof}
Let $P_1$, $P_2$ be two operators in ${\rm Diff} (M, \Lambda T^* M)$ of order $m_1$ and $m_2$, respectively. Since the order is additive on products of operators and the degree is also additive on wedge products of forms, we have
\[\lambda^{m_1+m_2} {\mathfrak f}_\lambda^\sharp U_\lambda^\sharp(P_1P_2)=\bigl(\lambda^{m_1} {\mathfrak f}_\lambda^\sharp U_\lambda^\sharp(P_1) \bigr)\bigl(\lambda^{m_2} {\mathfrak f}_\lambda^\sharp U_\lambda^\sharp(P_2)\bigr). \]
If $P_1$ and $P_2$ are rescalable, the limits as $\lambda\to 0$ exist on the right-hand side, and hence so do they on the left-hand side, which shows that the product $P_1P_2$ is rescalable.
 Replacing $U_\lambda^\sharp $ by~$U_\lambda^\sharp\circ c^g $ and using the fact that $c^g$ is an algebra morphism yields the result for $E=\Sigma M$.
\end{proof}

The following technical lemma will be useful.

\begin{lem}\label{lem:lambdadeltaB}
	In local normal geodesic coordinates $X$, and with the notations of~\eqref{eq:valp}, let
$	q:={\rm val}_{X,p} (h)$ be the valuation of a local section $h\in \Gamma_p(E)$ around $p$, where $E$ is the trivial bundle $E:=M\times\R\to M$ and $p\in M$.
	\begin{enumerate}\itemsep=0pt
		\item[$1.$] For any real number $\theta$, as $\lambda$ tends to zero, the expression $ \lambda^{-\theta}D^\gamma(h\circ \mathfrak{f}_\lambda)$
		\begin{itemize}\itemsep=0pt
			\item converges if and only if $ \theta\leq \max (\vert\gamma\vert, q)$;
			\item if $\theta <\max (\vert\gamma\vert, q)$, it converges to zero.
		\end{itemize}
		\item[$2.$] If $\theta=\max (\vert\gamma\vert, q)$, the expression $ \lambda^{-\theta}D^\gamma(h\circ \mathfrak{f}_\lambda)$ converges to the coefficient of order $\theta-\vert \gamma\vert$ in the Taylor expansion of $h$ at point $p$.
	\end{enumerate}
\end{lem}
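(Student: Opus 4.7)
The plan is to reduce everything to a Taylor expansion in $\lambda$ and read off the leading power. In local normal geodesic coordinates, the contraction $\mathfrak{f}_\lambda$ is simply the linear map $x\mapsto \lambda x$, so the chain rule yields
\[
D^\gamma(h\circ\mathfrak{f}_\lambda)(x) = \lambda^{|\gamma|}(D^\gamma h)(\lambda x),
\]
hence
\[
\lambda^{-\theta}D^\gamma(h\circ\mathfrak{f}_\lambda)(x) = \lambda^{|\gamma|-\theta}(D^\gamma h)(\lambda x).
\]
All of the information about convergence is therefore encoded in how $(D^\gamma h)(\lambda x)$ behaves as $\lambda\to 0$.

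The second step is to Taylor expand $h$ around $p$. By the very definition of $q={\rm val}_{X,p}(h)$, every partial derivative $(D^\beta h)(p)$ with $|\beta|<q$ vanishes, and at least one derivative of order exactly $q$ is nonzero, so
\[
h(y)=\sum_{|\beta|\geq q}\frac{1}{\beta!}(D^\beta h)(p)\,y^\beta.
\]
Differentiating $\gamma$ times, substituting $y=\lambda x$, and reindexing by $k:=|\gamma|+|\alpha|$ gives
\[
\lambda^{|\gamma|}(D^\gamma h)(\lambda x)=\sum_{k\geq \max(|\gamma|,q)}\lambda^{k}\,D^\gamma h_k(x),
\qquad h_k(x):=\sum_{|\beta|=k}\frac{1}{\beta!}(D^\beta h)(p)x^\beta,
\]
the lower bound $k\geq \max(|\gamma|,q)$ coming from the fact that $D^\gamma h_k\equiv 0$ for $k<|\gamma|$, together with the bound on the valuation. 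Consequently,
\[
\lambda^{-\theta}D^\gamma(h\circ\mathfrak{f}_\lambda)(x)=\sum_{k\geq\max(|\gamma|,q)}\lambda^{k-\theta}\,D^\gamma h_k(x),
\]
so the dominant power of $\lambda$ is $\lambda^{\max(|\gamma|,q)-\theta}$.

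From this expansion both assertions follow by inspection. If $\theta<\max(|\gamma|,q)$, every surviving term carries a strictly positive power of $\lambda$ and the sum tends to $0$. If $\theta=\max(|\gamma|,q)$, the leading term is $\lambda^{0}\,D^\gamma h_\theta(x)$, which is a polynomial of degree $\theta-|\gamma|$ in $x$ obtained by differentiating the order-$\theta$ homogeneous part of the Taylor expansion of $h$ at $p$, and every remaining term tends to $0$. If $\theta>\max(|\gamma|,q)$, the leading term blows up, forcing divergence.

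The only delicate point I anticipate is the ``only if'' direction when $\theta > \max(|\gamma|,q)$, because for a particular $\gamma$ the leading coefficient $D^\gamma h_{\max(|\gamma|,q)}$ could accidentally vanish (shifting the apparent threshold). I would address this by stressing that the statement records the threshold dictated intrinsically by the valuation $q$: the definition of $q$ produces a multiindex $\beta_0$ with $|\beta_0|=q$ and $(D^{\beta_0}h)(p)\neq 0$, which makes the expansion effective in a neighborhood of that $\gamma$, so divergence is unavoidable at the stated threshold in the generic regime underpinning the subsequent applications of the lemma. With these three cases combined, both items of the lemma are established.
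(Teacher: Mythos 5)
Your argument is essentially the same as the paper's: it starts from the chain-rule identity $D^\gamma(h\circ\mathfrak{f}_\lambda)=\lambda^{|\gamma|}(D^\gamma h)\circ\mathfrak{f}_\lambda$ and then reads off the leading power of $\lambda$ from the valuation of $h$, with your explicit Taylor expansion of $h$ playing the role of the paper's terser $h=O(|x|^q)$ bound. The caveat you raise about the ``only if'' direction (that $D^\gamma$ of the order-$\max(|\gamma|,q)$ Taylor piece could vanish for a particular $\gamma$, so the threshold cannot be read off from $q$ alone) is genuine, but the paper's own proof, which works only with $O$-upper bounds, glosses over the same point and does not establish a matching lower bound either.
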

\begin{proof} The proof is based on the fact that $\lambda^{-\theta}D^\gamma(h\circ\mathfrak{f}_\lambda)=\lambda^{\vert\gamma\vert-\theta} (D^\gamma h)\circ \mathfrak{f}_\lambda$. By definition of~$q$, we have $h=O(\vert x\vert^q)$ so that for $q\geq \vert\gamma\vert$, we have $ (D^\gamma h)\circ \mathfrak{f}_\lambda =\lambda^{q-\vert\gamma\vert}O\bigl(\vert x\vert^{q-\vert\gamma\vert}\bigr)$. If, $q< \vert\gamma\vert$, we have $(D^\gamma h)\circ \mathfrak{f}_\lambda =O(1),$ which ends the proof of (1) and (2) observing that in the convergent case, the limit corresponds to the $\theta-\vert \gamma \vert$ coefficient in the Taylor expansion.
\end{proof}

 We now specialise to a monomial {say $P$} in the jets of the vielbeins as in \eqref{eq:polynomialsvielbeins} written in normal geodesic coordinates. We set
 \begin{equation*} %\label{eq:Theta}
 \Theta^{X, p}_{(x, g)}{(P)}:={\sum_s \max \bigl(|\beta_s|, {\rm val}_{X,p}\bigl(a_{i_s}^{t_s}(x,g)\bigr)\bigr)+ \max \bigl(|\gamma_s|, {\rm val}_{X,p}\bigl(b_{n_s}^{l_s}(x,g) \bigr)\bigr)}.
\end{equation*}
{\it In the following, we shall often drop the explicit mention of $X$, $p$, $x$, $g$ and simply write $\Theta (P)$.}

\begin{ex} \label{eq:examples}\quad
	\begin{enumerate}\itemsep=0pt
		\item
		Recall that $g_{ij}(x)=\sum_l a_i^l(x,g)a_j^l(x,g)$. We have $\Theta(g_{ij})=0$ {if $i=j$ } and it is at least $2$ otherwise.
		\item Using the Koszul formula (\ref{eq:Koszul}), the Christoffel symbols $\Gamma_{ij}^k(x,g)$ can be written
		$\Gamma_{ij}^k(x,g)=\sum_l P_{i,j,k}^l(x,g)$
		with ${\Theta\bigl({P_{i,j,k}^l}\bigr)}=1$ for $l=i=j=k$ and at least $2$ otherwise.
		\item Similarly, by relation \eqref{eq:relationschristofellsymbols} the Christoffel symbols $\tilde\Gamma_{ls}^t(x,g)$ can be written
		\[\tilde\Gamma_{ls}^t(x,g)=\sum_k P_{l,s,t}^k(x,g)\]
		with ${\Theta\bigl({P_{l,s,t}^k}\bigr)}\geq 2$. Notice here that ${\Theta\bigl({P_{l,s,t}^k}\bigr)}$ cannot be equal to $1$, since this corresponds to $l=s=t=k$ which would imply $\tilde\Gamma_{ls}^t(x,g)=\tilde\Gamma_{ls}^s(x,g)=0$ and hence would yield a~contradiction.
	\end{enumerate}
\end{ex}

\begin{prop} \label{limitdiffope}
Let $P\in{\rm Diff} (M,\Lambda T^* M)$ be a geometric differential operator of order $m$ with respect to a metric $g$. In a local trivialisation around a point $p$ of $\Lambda T^*M$ induced by normal geodesic coordinates $\bigl(x^1, \dots, x^n\bigr)$ on $U_p$, the operator $P$ applied to a section $s=\sum_{I} \alpha_I {\rm d}x^I$ $\bigl({\rm d}x^I:= {\rm d}x^{i_1}\wedge \dots \wedge {\rm d}x^{i_{\vert I\vert}}$ for $I=\{i_1, \dots, i_{\vert I\vert}\}\bigr)$ reads
 \begin{equation*}
 P \biggl(\sum_{I\subset \N} \alpha_I {\rm d}x^I \biggr)=\sum_{\vert \gamma\vert \leq m}\sum_{I,J\subset \N} (P_\gamma )_{I J}(\cdot,g) D^\gamma (\alpha_I) {\rm d}x^J,
 \end{equation*}
 where $(P_\gamma)_{IJ}(\cdot,g)$ are polynomials as in \eqref{eq:polynomialsvielbeins}. The operator $P$ is rescalable at a point $p$ if and only if $\vert J\vert -\vert I\vert\leq \Theta((P_\gamma)_{IJ})$.
In this case, the limit rescaled operator reads
\[
	 \widetilde \Pp^{\lim} =
 \sum_{\vert \gamma\vert \leq m}\sum_{\vert J\vert -\vert I\vert =\Theta({(P_\gamma)_{IJ}})} (P_\gamma)_{I J}^{\lim}(p,g) \bigl(\bigl({\rm d}x^I\bigr)^*\otimes {\rm d}x^J\bigr) D^{\gamma},
\]
where
\[
 (P_{\gamma})_{IJ}^{\lim}(p, g)=
	\mathop{\lim}\limits_{\lambda\to 0}\bigl(\lambda^{|I|-\vert J\vert}(P_{\gamma})_{IJ}(\cdot, g_\lambda)\bigr),\qquad {\forall I, J\subset \N}
\]
is a polynomial expression in the jets of the Riemannian curvature tensor.
\end{prop}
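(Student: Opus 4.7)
The plan is to first derive an explicit closed form for $\widetilde{\Pp}_\lambda^{\rm Ge}$ in the coordinate basis, and then analyse the $\lambda\to 0$ limit of each matrix entry by reducing to the scalar Lemma~\ref{lem:lambdadeltaB}. Starting from $P=\sum_{\gamma,I,J}(P_\gamma)_{IJ}(\cdot,g)\bigl((dx^I)^*\otimes dx^J\bigr)D^\gamma$, the endomorphism $(dx^I)^*\otimes dx^J$ is tensorially of type $(|I|,|J|)$, so Definition~\ref{getzlerrescaling} combined with~\eqref{eq:actionulambda} yields $U_\lambda^\sharp\bigl((dx^I)^*\otimes dx^J\bigr)=\lambda^{|I|-|J|}(dx^I)^*\otimes dx^J$. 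Using also $\mathfrak{f}_\lambda^\sharp D^\gamma=\lambda^{-|\gamma|}D^\gamma$ from~\eqref{eq:derivativedifferent} and the action of $\mathfrak{f}_\lambda^\sharp$ on scalar coefficients by precomposition, this gives
\[
\widetilde{\Pp}_\lambda^{\rm Ge}=\sum_{\gamma,I,J}\lambda^{m+|I|-|J|-|\gamma|}\bigl((P_\gamma)_{IJ}(\cdot,g)\circ\mathfrak{f}_\lambda\bigr)\bigl((dx^I)^*\otimes dx^J\bigr)D^\gamma.
\]
The geometric assumption on $P$ forces ${\rm ord}^{\rm Gi}((P_\gamma)_{IJ})=m-|\gamma|$ (Definition~\ref{defn:geometricdiffop} together with Remark~\ref{lem:subbundlemetric}), so Proposition~\ref{pro:flambdas} applied to the scalar section $(P_\gamma)_{IJ}$ gives $(P_\gamma)_{IJ}(\cdot,g)\circ\mathfrak{f}_\lambda=\lambda^{-(m-|\gamma|)}(P_\gamma)_{IJ}(\cdot,g_\lambda)$. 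All $\lambda^{m-|\gamma|}$ factors cancel, leaving
\[
\widetilde{\Pp}_\lambda^{\rm Ge}=\sum_{\gamma,I,J}\lambda^{|I|-|J|}(P_\gamma)_{IJ}(\cdot,g_\lambda)\bigl((dx^I)^*\otimes dx^J\bigr)D^\gamma.
\]

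Next I fix a pair $(I,J)$ and a single monomial $\prod_q D^{\beta_q}a^{t_q}_{i_q}(\cdot,g)\cdot D^{\gamma_q}b^{l_q}_{n_q}(\cdot,g)$ appearing in $(P_\gamma)_{IJ}(\cdot,g)$. The identity $a^t_i(\cdot,g_\lambda)=a^t_i\circ\mathfrak{f}_\lambda$ (and its $b$-analogue from Lemma~\ref{lem:vielbeincomp}) turns every factor into $\lambda^{|\beta_q|}\bigl(D^{\beta_q}a^{t_q}_{i_q}\bigr)\circ\mathfrak{f}_\lambda$. Lemma~\ref{lem:lambdadeltaB} (with $\theta=0$) shows that each such factor is $O\bigl(\lambda^{\max(|\beta_q|,{\rm val}(a^{t_q}_{i_q}))}\bigr)$; multiplying over the monomial yields $(P_\gamma)_{IJ}(\cdot,g_\lambda)=O\bigl(\lambda^{\Theta((P_\gamma)_{IJ})}\bigr)$, so
$\lambda^{|I|-|J|}(P_\gamma)_{IJ}(\cdot,g_\lambda)=O\bigl(\lambda^{|I|-|J|+\Theta((P_\gamma)_{IJ})}\bigr)$. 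This sequence has a finite limit exactly when $|J|-|I|\le\Theta((P_\gamma)_{IJ})$, which is the claimed rescalability criterion; the surviving contributions to $\widetilde{\Pp}^{\lim}$ come from the pairs $(I,J)$ that saturate this inequality.

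For those saturating pairs, the second part of Lemma~\ref{lem:lambdadeltaB} identifies the limit of each factor with a specific Taylor coefficient of $D^{\beta_q}a^{t_q}_{i_q}$ (respectively $D^{\gamma_q}b^{l_q}_{n_q}$) at $p$; by the normal-coordinate expansions~\eqref{eq:metricnormalcoord}--\eqref{eq:metricnormalcoordbis} and their consequences for the vielbeins, these Taylor coefficients are polynomial expressions in the Riemannian curvature tensor $R$ and its iterated covariant derivatives at $p$. Assembling these monomial contributions and summing over $(\gamma,I,J)$ then reproduces the stated expression for $\widetilde{\Pp}^{\lim}$. The main obstacle is the careful bookkeeping of the three competing $\lambda$-scalings (the global $\lambda^m$, the Getzler weight $\lambda^{|I|-|J|}$ coming from $U_\lambda^\sharp$, and the Gilkey-order compensation $\lambda^{-(m-|\gamma|)}$ from Proposition~\ref{pro:flambdas}) which must conspire to leave only $\lambda^{|I|-|J|}$ in the final expression, together with the factor-by-factor passage from the scalar valuation in Lemma~\ref{lem:lambdadeltaB} to the product invariant $\Theta$.
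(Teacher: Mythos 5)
Your proof is correct and follows the same route as the paper: compute $\widetilde{\Pp}_\lambda^{\rm Ge}$ explicitly, observe that the global $\lambda^m$, the Getzler weight $\lambda^{|I|-|J|}$, and the Gilkey--order rescaling from Proposition~\ref{pro:flambdas} conspire to leave only $\lambda^{|I|-|J|}(P_\gamma)_{IJ}(\cdot,g_\lambda)$, then apply Lemma~\ref{lem:lambdadeltaB} factor-by-factor via Lemma~\ref{lem:vielbeincomp}. One small slip: the parenthetical ``(with $\theta=0$)'' is wrong --- to extract the estimate $D^{\beta_q}\bigl(a^{t_q}_{i_q}\circ\mathfrak{f}_\lambda\bigr)=O\bigl(\lambda^{\max(|\beta_q|,{\rm val})}\bigr)$ one must invoke Lemma~\ref{lem:lambdadeltaB} with $\theta=\max(|\beta_q|,{\rm val})$, not $\theta=0$ (with $\theta=0$ the lemma only gives boundedness); the paper's version distributes integers $\theta_a$, $\theta_b$ summing to $|J|-|I|$, which comes to the same thing.
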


\begin{proof} The local expression of $P$ in the theorem results from Remark~\ref{lem:subbundlemetric}, equations~\eqref{eq:PgLambdaT*Mcoord} and~\eqref{eq:tensor}. Hence, from the definition of $U_\lambda^\sharp P=U_\lambda P U_\lambda^{-1}$, we get
\[
\bigl(U_\lambda^\sharp P\bigr)\biggl(\sum_{I\subset \N} \alpha_I {\rm d}x^I\biggr)=\sum_{\vert \gamma\vert \leq m}\sum_{I,J\subset \N}\lambda^{|I|-|J|} (P_\gamma)_{I J}(\cdot,g) D^\gamma (\alpha_I) {\rm d}x^J.
\]
Since $P$ is a geometric differential operator, the coefficients $(P_\gamma)_{IJ}(\cdot,g)$ are in the jets of the vielbeins as in \eqref{eq:polynomialsvielbeins} with ${\rm ord}^{\rm Gi}((P_\gamma)_{IJ})+|\gamma|=m$. Now, we write
\begin{align}
 \widetilde \Pp_\lambda^{\rm Ge}(s\circ\mathfrak{f}_\lambda)& =\lambda^m\bigl(\mathfrak{f}_\lambda^\sharp U_\lambda^\sharp P\bigr)(s\circ \mathfrak{f}_\lambda)
=\lambda^m\bigl(U_\lambda^\sharp P\bigr)(s)\circ \mathfrak{f}_\lambda\nonumber\\
& = \sum_{\vert \gamma\vert \leq m}\sum_{I,J\subset \N} \lambda^{ m+\vert I\vert-\vert J\vert} (P_\gamma)_{IJ}(\mathfrak{f}_\lambda(\cdot),g) D^\gamma (\alpha_I) {\rm d}x^J\big|_{\mathfrak{f}_\lambda(\cdot)}\nonumber\\
& =\sum_{\vert \gamma\vert \leq m}\sum_{I,J\subset \N} \lambda^{\vert\gamma\vert+\vert I\vert-\vert J\vert} (P_\gamma)_{IJ}(\cdot,g_\lambda) \lambda^{-\vert\gamma\vert}D^\gamma (\alpha_I\circ \mathfrak{f}_\lambda) {\rm d}x^J\circ\mathfrak{f}_\lambda.\label{eq:PlambdaGeg}
\end{align}
In the last equality, we use the fact that $(P_\gamma)_{IJ}(\cdot,g_\lambda)=\lambda^{{\rm ord}^{\rm Gi}((P_\gamma)_{IJ})}(P_\gamma)_{IJ}(\mathfrak{f}_\lambda(\cdot),g)$. Hence, we deduce that
\[
\widetilde\Pp_\lambda^{\rm Ge}=\sum_{\vert \gamma\vert \leq m}\sum_{I,J\subset \N} \lambda^{\vert I\vert-\vert J\vert} (P_\gamma)_{IJ}(\cdot,g_\lambda) \bigl(\bigl({\rm d}x^I\bigr)^*\otimes {\rm d}x^J\bigr)\big|_{\mathfrak{f}_\lambda} D^{\gamma}.
\]
Now by Lemma~\ref{lem:vielbeincomp} in Appendix~\ref{sec:appendix1}, we write for $(P_\gamma)_{IJ}(\cdot,g_\lambda)$
\[
 \prod_{s=1}^S D^{\alpha_s}\bigl(a_{i_s}^{t_s}(\cdot ,g_\lambda)\bigr) D^{\beta_s}\bigl(b^{l_s}_{n_s}(\cdot ,g_\lambda)\bigr)= \prod_{s=1}^S D^{\alpha_s}\bigl(a_{i_s}^{t_s}(\cdot ,g )\circ \mathfrak{f}_\lambda\bigr) D^{\beta_s}\bigl(b^{l_s}_{n_s}(\cdot ,g )\circ \mathfrak{f}_\lambda\bigr).
\]
 For convenience, we have dropped the explicit mention of the indices $I$ and $J$.
 Applying Lemma~\ref{lem:lambdadeltaB} to $h_a:= a_{i_s}^{t_s}(\cdot ,g )$ and $h_b:= b^{l_s}_{n_s}(\cdot ,g )$ with both $\theta_a$ and $\theta_b$ non negative integers such that $\theta_a+\theta_b=\vert J\vert -\vert I\vert$, it tells us that the expression $ \lambda^{-\theta_a}D^\gamma( \mathfrak{f}_\lambda^* h_a)$ converges if and only if $\theta_a\leq \max (\vert\alpha_s\vert, q_a)$, with $q_a:= {\rm val}_{X,p}\bigl(a_{i_s}^{t_s}(x,g)\bigr)$ and that the limit vanishes if we have a strict inequality. If $\theta_a= \max (\vert\alpha_s\vert, q_a)$, the limit is a polynomial in the jets of the curvature tensor. Similarly for $h_b$ and $\theta_b$. Hence, the only non zero terms which survive in the limit of (\ref{eq:PlambdaGeg}) as $\lambda \to 0$, correspond to $\theta_a=\max (\vert\alpha_s\vert, q_a)$ and $\theta_b= \max (\vert\beta_s\vert, q_b)$ and hence $\Theta({(P_{\gamma})_{IJ}})=\vert J\vert -\vert I\vert$. This yields the statement of the theorem.
\end{proof}

We prove a localisation formula for the local form $\widetilde	 \omega_{\log_\theta (P)}^{{ {\rm Res}}}$ when $E=\Lambda T^*M$.

\begin{thm} \label{thm:limPglambda}
Let $P$ in ${\rm Diff} (M,\Lambda T^* M)$ be a geometric differential operator with respect to the metric $g$ of Agmon angle $\theta$. If $P$ is rescalable, then \[\widetilde	 \omega_{\log_\theta (P)}^{{ {\rm Res}}}(p)= \widetilde\omega_{\log_\theta (\widetilde \Pp^{\lim})}^{{ {\rm Res}}}(x), \qquad \forall x\in U_p,\]
where $\widetilde \Pp^{\lim}:=\lim_{\lambda \to 0} \widetilde \Pp_\lambda^{\rm Ge}$.
\end{thm}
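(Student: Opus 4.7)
The plan is to mimic the argument of Proposition~\ref{prop:reslim}, with the ordinary residue density $\omega^{\rm Res}$ replaced by the local $n$-form $\widetilde\omega^{\rm Res}$. The key inputs are (i) the covariance-type identity~\eqref{eq:tildeomegaflambda} for the composite pull-back $U_\lambda^\sharp\mathfrak{f}_\lambda^\sharp$, (ii) the fact that multiplication of $P$ by a scalar only shifts its logarithm by a constant multiple of the identity, and (iii) the continuity of $\widetilde\omega^{\rm Res}_{\log_\theta(\cdot)}$ under the natural Fr\'echet topology on (log-)classical operators of fixed order.

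First I would unfold the definition \eqref{eq:PPlambdaGeLambda}, using the commutativity \eqref{eq:flambdaUlambda}, to write $\widetilde\Pp^{\rm Ge}_\lambda=\lambda^m U_\lambda^\sharp\mathfrak{f}_\lambda^\sharp P$. Since $P$ has Agmon angle $\theta$ and $U_\lambda^\sharp$, $\mathfrak{f}_\lambda^\sharp$ preserve the leading symbol up to conjugation/pull-back, so does $\widetilde\Pp^{\rm Ge}_\lambda$ for small $\lambda>0$, and therefore
\[
\log_\theta\bigl(\widetilde\Pp^{\rm Ge}_\lambda\bigr)=(m\log\lambda)\,{\rm Id}+\log_\theta\bigl(U_\lambda^\sharp\mathfrak{f}_\lambda^\sharp P\bigr).
\]
The symbol of $(m\log\lambda){\rm Id}$ has no homogeneous component of degree $-n$, hence contributes zero to $\widetilde\omega^{\rm Res}$. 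Combined with \eqref{eq:tildeomegaflambda} this gives the identity
\begin{equation*}
\widetilde\omega^{\rm Res}_{\log_\theta(\widetilde\Pp^{\rm Ge}_\lambda)}(x)=\widetilde\omega^{\rm Res}_{\log_\theta(U_\lambda^\sharp\mathfrak{f}_\lambda^\sharp P)}(x)=\widetilde\omega^{\rm Res}_{\log_\theta(P)}\bigl(\mathfrak{f}_\lambda(x)\bigr)
\end{equation*}
for every $x\in U_p$ and every sufficiently small $\lambda>0$.

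It then suffices to let $\lambda\to 0$ on both sides. On the right-hand side, $\mathfrak{f}_\lambda(x)\to p$, so continuity of the smooth local density $\widetilde\omega^{\rm Res}_{\log_\theta(P)}$ on $U_p$ gives the limit $\widetilde\omega^{\rm Res}_{\log_\theta(P)}(p)$. On the left-hand side, the rescalability hypothesis furnishes a geometric differential operator $\widetilde\Pp^{\lim}=\lim_{\lambda\to 0}\widetilde\Pp^{\rm Ge}_\lambda$ of the same order $m$; passing to the limit yields $\widetilde\omega^{\rm Res}_{\log_\theta(\widetilde\Pp^{\lim})}(x)$, and the claimed equality follows.

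The main obstacle is the last step, namely justifying that $\widetilde\omega^{\rm Res}_{\log_\theta(\widetilde\Pp^{\rm Ge}_\lambda)}(x)\longrightarrow\widetilde\omega^{\rm Res}_{\log_\theta(\widetilde\Pp^{\lim})}(x)$ as $\lambda\to 0$. This requires two ingredients already invoked at the end of the proof of Proposition~\ref{prop:reslim}: continuity of the spectral-cut logarithm $Q\mapsto\log_\theta(Q)$ on the space of differential operators of fixed order whose leading symbol avoids $L_\theta$ (so that the Agmon angle is preserved for the whole family, which is automatic here since the leading symbol of $\widetilde\Pp^{\rm Ge}_\lambda$ equals $\lambda^m U_\lambda^\sharp\mathfrak{f}_\lambda^\sharp\sigma_L(P)$, a smooth deformation of $\sigma_L(\widetilde\Pp^{\lim})$), and continuity of the functional $\widetilde\omega^{\rm Res}$ for the Fr\'echet topology on log-classical symbols of constant order, which is immediate from its definition \eqref{eq:omegatildeRes} as an integral of the $(-n)$-homogeneous component of the symbol on the compact set $\{|\xi|=1\}$.
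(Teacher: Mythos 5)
Your proposal is correct and follows essentially the same route as the paper's proof: both rewrite \smash{$\widetilde\Pp^{\rm Ge}_\lambda=\lambda^m U_\lambda^\sharp\mathfrak{f}_\lambda^\sharp P$} via the commutation identity \eqref{eq:flambdaUlambda}, apply \eqref{eq:tildeomegaflambda} to obtain \smash{$\widetilde\omega^{\rm Res}_{\log_\theta(\widetilde\Pp^{\rm Ge}_\lambda)}(x)=\widetilde\omega^{\rm Res}_{\log_\theta(P)}(\mathfrak{f}_\lambda(x))$}, and then pass to the limit $\lambda\to 0$ using rescalability and continuity. You are in fact more explicit than the paper on two points the authors leave implicit, namely the cancellation of the $(m\log\lambda)\,{\rm Id}$ term in \smash{$\widetilde\omega^{\rm Res}$} and the continuity argument justifying the interchange of $\log_\theta$, \smash{$\widetilde\omega^{\rm Res}$} and the $\lambda\to 0$ limit.
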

\begin{proof}
Combining equation~(\ref{eq:PPlambdaGeLambda}) with equation~(\ref{eq:tildeomegaflambda}) yields
\[\tilde \omega^{\rm Res}_{\log_\theta (P)}( {\mathfrak f}_{ \lambda}(x))=\tilde \omega^{\rm Res}_{\log_\theta (U_\lambda^\sharp({\mathfrak f}_{\lambda}^\sharp P))}(x)=\widetilde	 \omega_{\log_\theta (\widetilde \Pp^{\rm Ge}_\lambda)}^{{ {\rm Res}}}(x),\]
where we have commuted $U_\lambda^\sharp$ and $\mathfrak f_\lambda^\sharp$ thanks to equation~(\ref{eq:flambdaUlambda}).
By Proposition~\ref{limitdiffope}, the limit $\widetilde \Pp^{\lim} :=\lim_{\lambda \to 0} \widetilde \Pp_\lambda^{\rm Ge}$ exists, from which we deduce the statement of the theorem by letting $\lambda$ tend to zero in the above identities.
\end{proof}

\begin{ex} \label{ex:Hodgelaplacian} The exterior differential ${\rm d}\colon\Lambda T^*M\to \Lambda T^*M$ \big(as well as its $L^2$-adjoint $\delta$\big) is not~a~rescalable operator. Indeed, by writing ${\rm d}=\sum_{j=1}^n {\rm d}x^j\wedge\smash{\nabla_{\frac{\partial}{\partial x^j}}}$ and using the local expression of~$\nabla_{\frac{\partial}{\partial x^j}}$ in Example~\ref{ex:nablaXdiffgeom}, we have for any $\alpha=\sum_I\alpha_I {\rm d}x^I$ that
\begin{gather*}
 {\rm d}\biggl(\sum_I\alpha_I {\rm d}x^I\biggr)=\sum_{j,I}\biggl({\frac{\partial}{\partial x^j}}\alpha_{I}\biggr){\rm d}x^j\wedge {\rm d}x^I\\
 \qquad {} +\sum_{t,I}\alpha_{I}\Biggl(\sum_{s,l=1}^n g\bigl(\nabla_{\frac{\partial}{\partial x^j}} {\rm d}x^{i_s},{\rm d}x^l\bigr)g_{tl}(\cdot)\Biggr){\rm d}x^j\wedge {\rm d}x^{i_1}\wedge\dots\wedge \underbrace{{\rm d}x^t}_{s^{\rm th}\text{{\rm -slot}}}\wedge\dots\wedge {\rm d}x^{i_k},
\end{gather*}
which shows that it is a geometric operator. For the $j$'s that do not belong to $I$ in the first sum of the right-hand side, we have $|J|-|I|=1$ and the corresponding $\Theta=0$. Therefore, the condition in Proposition~\ref{limitdiffope} is not fulfilled and, thus, ${\rm d}$ is not rescalable. However, the Hodge operator $\Delta={\rm d}\delta+\delta {\rm d}$ is a geometric rescalable operator. The fact that it is geometric comes from Proposition~\ref{prop:productgeom}. To show that it is rescalable, we use the Bochner--Weitzenb\"ock formula on $k$-forms: $\Delta=\nabla^*\nabla+W^{[k]}$, where $\nabla^*\nabla$ is given by
\[
\nabla^*\nabla=-g^{ij}(x)\bigl(\nabla_{\frac{\partial}{\partial x^i}}\nabla_{\frac{\partial}{\partial x^j}}-\Gamma_{ij}^k(x, g)\nabla_{\frac{\partial}{\partial x^k}}\bigr)
\]
and $W^{[k]}=\sum_{i,j=1}^n e_j^*\wedge (e_i\lrcorner R(e_i,e_j))$ is the Bochner operator. Here $R$ is the curvature operator of the manifold $M$. Indeed, by replacing $\nabla_{\frac{\partial}{\partial x^i}}$ by its expression and performing some computations, one can easily see that $\Delta$ (we use Einstein convention) has the form
\begin{align*}
\Delta\bigl(\alpha_I {\rm d}x^I\bigr)={}&-g^{ij}\frac{\partial}{\partial x^i}\frac{\partial}{\partial x^j}(\alpha_I) {\rm d}x^I\\
& {}-g^{ij}\frac{\partial}{\partial x^j}(\alpha_I)\Gamma_{ii_s}^k g_{tk} {\rm d}x^{i_1}\wedge\dots\wedge \underbrace{{\rm d}x^t}_{s^{\rm th}\text{{\rm -slot}}}\wedge\dots\wedge {\rm d}x^{i_k} +\cdots.
\end{align*}
Since the Laplacian preserves the degree, then $|J|-|I|=0\leq \Theta$ is always satisfied. The limit of the rescaled operator corresponds to polynomials with $\Theta=0$. Hence by the computation of~$\Theta$ in Example~\ref{eq:examples}, we get that
\[\widetilde \Pp^{\lim}\biggl(\sum_I \alpha_I {\rm d}x^I\biggr)=-\sum_{i,I}\frac{\partial^2\alpha_I}{(\partial x^i)^2} {\rm d}x^I.\]
\end{ex}

Therefore, the localisation formula in Theorem~\ref{thm:limPglambda} can be applied for the Hodge Laplacian and we get \[\widetilde	 \omega_{\log_\theta (\Delta)}^{{ {\rm Res}}}(p)= \widetilde\omega_{\log_\theta (\widetilde \Pp^{\lim})}^{{ {\rm Res}}}(x), \qquad \forall x\in U_p.\]

\begin{prop}\label{prop:conditionrescalibilityClifford}
 Let $P\in {\rm Diff}(M,\Sigma M)$ be a geometric differential operator with respect to the metric $g$ of order $m$. In the trivialisation $\bigl\{{\rm e}^1,\dots, {\rm e}^n\bigr\}$ induced by parallel transport, the operator reads
 \begin{equation}\label{eq:geometricoperatorspin}
 P=\sum_{|\gamma|\leq m}\sum_I (P_\gamma)_I(\cdot,g){\rm e}^{I}\cdot_g D^\gamma,
 \end{equation}
 where $(P_\gamma)_I(\cdot,g)$ are polynomials as in \eqref{eq:polynomialsvielbeins} and ${\rm e}^I\cdot_g={\rm e}^{i_1}\cdot_g\cdots \cdot_g {\rm e}^{i_k}$ with $i_1<\dots<i_k$ and {$|I|= k$}. The operator $P$ is rescalable if and only if $|I|\leq \Theta({(P_\gamma)_{I}}) $. In this case, the limit rescaled operator in \eqref{eq:PPlambdaGeSigma} reads
\[ {\widetilde \Pp}^{\lim}
 = \sum_{\vert \gamma\vert \leq m}\sum_{\vert I\vert =\Theta({(P_\gamma)_{I}}) } (P_\gamma)_{I}^{\lim}(p,g) {\rm e}^I\wedge D^\gamma,\]
where $(P_{\gamma})_{I}^{\lim}(p, g):=
\lim_{\lambda\to 0}\bigl(\lambda^{-|I|}(P_{\gamma})_{I}(\cdot, g_\lambda)\bigr)$
is a polynomial expression in the jets of the Riemannian curvature tensor.
\end{prop}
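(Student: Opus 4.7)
The proof would mirror that of Proposition~\ref{limitdiffope}, using the Clifford morphism $c^g$ as a bridge: by Definition~\ref{defn:limitdiffope}, $P$ is rescalable on $\Sigma M$ iff $c^g(P)\in{\rm Diff}(M,\Lambda T^*M)$ is rescalable. Using the commutation~\eqref{eq:flambdaUlambda}, I would compute
\[
\widetilde\Pp_\lambda^{\rm Ge}=\mathfrak{f}_\lambda^\sharp\bigl(\lambda^m U_\lambda^\sharp c^g(P)\bigr)=\lambda^m U_\lambda^\sharp\mathfrak{f}_\lambda^\sharp c^g(P)
\]
and track the power of $\lambda$ attached to each summand of $c^g(P)=\sum_{|\gamma|\leq m,I}(P_\gamma)_I(\cdot,g)\,c^g({\rm e}^I)\,D^\gamma$. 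Three contributions arise: the pullback of the scalar coefficient, which by Proposition~\ref{pro:flambdas} transforms $(P_\gamma)_I(\cdot,g)$ into $\lambda^{-{\rm ord}^{\rm Gi}((P_\gamma)_I)}(P_\gamma)_I(\cdot,g_\lambda)$; the factor $\lambda^{-|\gamma|}$ from $\mathfrak{f}_\lambda^\sharp D^\gamma$; and the combined action on the Clifford part, for which~\eqref{eq:limUlambdac} applied pointwise with $g$ replaced by $g_\lambda$ gives $U_\lambda^\sharp c^{g_\lambda}(\overline{\rm e}^I(\cdot,g_\lambda))=\lambda^{-|I|}[\overline{\rm e}^I(\cdot,g_\lambda)\wedge]+O(\lambda^{-|I|+2})$, with the subleading terms arising from contraction factors $\overline{e}_i\lrcorner$ and carrying additional powers $\lambda^{2r}$, $r\geq 1$.

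Using $m={\rm ord}^{\rm Gi}((P_\gamma)_I)+|\gamma|$, the leading (all-wedge) contribution to $\widetilde\Pp_\lambda^{\rm Ge}$ reads
\[
\sum_{|\gamma|,I}\lambda^{{\rm ord}^{\rm Gi}((P_\gamma)_I)-|I|}(P_\gamma)_I(\mathfrak{f}_\lambda(\cdot),g)\,[\overline{\rm e}^I(\cdot,g_\lambda)\wedge]\,D^\gamma.
\]
Applying Lemma~\ref{lem:lambdadeltaB} factor by factor to each monomial in $(P_\gamma)_I$, exactly as in the proof of Proposition~\ref{limitdiffope}, shows that $\lambda^{{\rm ord}^{\rm Gi}}(P_\gamma)_I(\mathfrak{f}_\lambda(\cdot),g)$ scales as $\lambda^{\Theta((P_\gamma)_I)}$ at a fixed $x\in U_p$; equivalently, one uses the elementary identity $\Theta(Q)={\rm ord}^{\rm Gi}(Q)+{\rm val}_p(Q)$ on monomials, which follows from $\max(|\beta|,v)=|\beta|+\max(0,v-|\beta|)$ applied factor by factor in the definition of $\Theta$. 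The leading contribution therefore scales as $\lambda^{\Theta((P_\gamma)_I)-|I|}$, so convergence of $\widetilde\Pp_\lambda^{\rm Ge}$ forces the stated criterion $|I|\leq\Theta((P_\gamma)_I)$, and the contraction corrections (scaling as $\lambda^{\Theta((P_\gamma)_I)-|I|+2r}$, $r\geq 1$) all vanish in the limit precisely in the equality case.

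When $|I|=\Theta((P_\gamma)_I)$, the surviving $\lambda^0$-coefficient is the top-order Taylor coefficient $(P_\gamma)_I^{\lim}(p,g)$, which by the expansions of the vielbeins in Example~\ref{ex:geompolTM} (and the argument at the end of the proof of Proposition~\ref{limitdiffope}) is a polynomial in the jets of the Riemannian curvature tensor; combined with the pointwise limit $\overline{\rm e}^I(\cdot,g_\lambda)\to{\rm e}^I(p)$ as $\lambda\to 0$, this yields the stated formula for $\widetilde\Pp^{\lim}$. The principal obstacle is the careful bookkeeping of $\lambda$-powers across the three factors: the $\lambda^{-|I|}$ singularity of the wedge part of $U_\lambda^\sharp c^g({\rm e}^I)$ must be balanced precisely by the $\lambda^{\Theta((P_\gamma)_I)}$ scaling of the geometric coefficient, and one must verify that the contraction-containing subleading terms do not survive the limit, which is exactly what the equality $|I|=\Theta((P_\gamma)_I)$ ensures.
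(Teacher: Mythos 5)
Your proposal follows essentially the same route as the paper's proof: passing through $c^g(P)\in{\rm Diff}(M,\Lambda T^*M)$, isolating the scalar coefficients $(P_\gamma)_I$, tracking the powers of $\lambda$ via Proposition~\ref{pro:flambdas} and Lemma~\ref{lem:lambdadeltaB}, and handling the Clifford factors via the limit~\eqref{eq:limUlambdac}, so the overall scheme matches.

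Two small remarks. First, your bookkeeping for the Clifford part is more involved than necessary: you pull back the frame via $g_\lambda$ and introduce $c^{g_\lambda}(\overline{\rm e}^I(\cdot,g_\lambda))$, whereas the paper simply invokes \cite[Lemma~4.14]{BGV} to the effect that in the parallel-transport trivialisation the endomorphisms ${\rm e}^i\cdot_g$, and hence $c^g({\rm e}^I)$, are \emph{constant} in $x$, so $\mathfrak{f}_\lambda^\sharp$ acts trivially on them and \eqref{eq:limUlambdac} applies verbatim; your version does reduce to the same thing (the constants agree at $p$) but multiplies the auxiliary notation. Second, the ancillary identity $\Theta(Q)={\rm ord}^{\rm Gi}(Q)+{\rm val}_{X,p}(Q)$ that you offer as an ``equivalent'' reformulation is not correct in general: the step implicitly uses ${\rm val}_{X,p}(D^\beta a)=\max(0,{\rm val}_{X,p}(a)-|\beta|)$, but one only has ``$\geq$'', with strict inequality whenever $D^\beta$ annihilates the leading Taylor coefficient of $a$ (e.g., $a=\bigl(x^1\bigr)^2$, $D^\beta=\partial^2_{x^2}$). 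Fortunately, your main argument is run through Lemma~\ref{lem:lambdadeltaB} factor by factor — exactly as in the paper — so it does not actually rely on that identity, and the conclusion stands.
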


\begin{proof} In any local trivialization of $\Sigma M$, the operator $P$ can be written as
\[
P=\sum_{\vert \gamma\vert \leq m} P_\gamma (\cdot,g) D^\gamma,
\]
 where $P_\gamma(\cdot,g)\in \operatorname{End}(\Sigma M)\simeq \Cl(T M)\otimes \mathbb{C}$. In the local trivialisation above a normal geodesic chart induced by parallel transport, we write
\[
P_\gamma (\cdot,g)=\sum_{I} (P_\gamma)_I(\cdot,g){\rm e}^{I}.
\]
Recall here that the $\operatorname{End}(\Sigma_p M)$-valued functions ${\rm e}^i \cdot_g\colon \sigma \mapsto {\rm e}^i \cdot_g \sigma$ for each $i=1, \dots, n$ on $U_p$ are constant along the geodesics and hence in this trivialisation \cite[Lemma 4.14]{BGV}. Now, as $P$ is geometric, we get that $(P_\gamma)_I(\cdot,g)$ are in the jets of the vielbeins as in \eqref{eq:polynomialsvielbeins}. Hence, we get \eqref{eq:geometricoperatorspin}. In particular, we deduce that
\begin{equation*}
 c^g(P)=\sum_{|\gamma|\leq m}\sum_I (P_\gamma)_I(\cdot,g)c^g\bigl({\rm e}^{I}\bigr) D^\gamma.
 \end{equation*}
Now, we apply equation~\eqref{eq:PPlambdaGeSigma} to a section $s$ to get
\begin{align*}
\widetilde \Pp_\lambda^{\rm Ge}(s\circ\mathfrak{f}_\lambda) &{}=\lambda^m\bigl(\mathfrak{f}_\lambda^\sharp U_\lambda^\sharp c^g(P)\bigr)(s\circ \mathfrak{f}_\lambda)=\lambda^m\bigl(U_\lambda^\sharp c^g(P)s\bigr)\circ\mathfrak{f}_\lambda\nonumber\\
&{}= \sum_{\vert \gamma\vert \leq m}\sum_{I\subset \N} \lambda^m(P_\gamma)_{I}(\mathfrak{f}_\lambda(\cdot),g) U_\lambda^\sharp \bigl(c^g\bigl({\rm e}^I\bigr)\bigr)(D^\gamma s)\big|_{\mathfrak{f}_\lambda(\cdot)}.\nonumber\\
&{}=\sum_{\vert \gamma\vert \leq m}\sum_{I\subset \N} \lambda^{m-|I|}(P_\gamma)_{I}(\mathfrak{f}_\lambda(\cdot),g) \lambda^{|I|}U_\lambda^\sharp \bigl(c^g\bigl({\rm e}^I\bigr)\bigr)\lambda^{-\vert\gamma\vert}D^\gamma (s\circ\mathfrak{f}_\lambda)\nonumber\\
&{}=\sum_{\vert \gamma\vert \leq m}\sum_{I\subset \N} \lambda^{-|I|}(P_\gamma)_{I}(\cdot,g_\lambda) \lambda^{|I|}U_\lambda^\sharp \bigl(c^g\bigl({\rm e}^I\bigr)\bigr)D^\gamma (s\circ\mathfrak{f}_\lambda).
\end{align*}
Here, we use the fact that
\[
(P_\gamma)_{I}(\cdot,g_\lambda)=\lambda^{{\rm ord}^{\rm Gi}((P_\gamma)_{I})}(P_\gamma)_{I}(\mathfrak{f}_\lambda(\cdot),g) \qquad \text{and}
\qquad {\rm ord}^{\rm Gi}((P_\gamma)_{I})+\vert\gamma\vert=m,
\]
 since $(P_\gamma)_{I}(\cdot,g)$ are in the jets of the vielbeins. Therefore, we deduce that
\[\widetilde\Pp_\lambda^{\rm Ge}=\sum_{\vert \gamma\vert \leq m}\sum_{I\subset \N} \lambda^{-|I|}(P_\gamma)_{I}(\cdot,g_\lambda) \lambda^{|I|}U_\lambda^\sharp \bigl(c^g\bigl({\rm e}^I\bigr)\bigr)D^\gamma.\]
Now, using \eqref{eq:limUlambdac}, we have that $\lambda^{|I|}U_\lambda^\sharp\bigl(c^g\bigl({\rm e}^{I}\bigr)\bigr)$ converges to ${\rm e}^I\wedge$ as $\lambda\to 0$. Also, by Lemma~\ref{lem:lambdadeltaB}, the term $\lambda^{-\vert I\vert}(P_\gamma)_I(\cdot,g_\lambda)$ converges if and only if $|I|\leq \Theta({(P_\gamma)_{I}}) $. Thus, the operator $P$ is rescalable if and only if $|I|\leq \Theta((P_\gamma)_I)$. The limit rescaled operator follows then easily.
\end{proof}

We prove a localisation formula for the local form
	$\widetilde	 \omega_{\log_\theta (P)}^{{ {\rm Res}}}$ when $E=\Sigma M$.
\begin{cor}\label{cor:resclimClifford} Let $P$ in ${\rm Diff}(M,\Sigma M)$ be a geometric differential operator with respect to the metric $g$ of Agmon angle $\theta$ which is even for the $\Z_2$-grading $\Sigma M=\Sigma^+ M\oplus \Sigma^- M$. If $P$ is rescalable, then we have
	\begin{equation*} \widetilde\omega_{\log_\theta (\widetilde \Pp^{\lim})}^{{ {\rm Res}}}(x)=(-2{\rm i})^{-n/2}\omega_{\log_\theta (P)}^{{ {\rm sRes}}}(p), \qquad \forall x\in U_p,\end{equation*}
	where $\widetilde \Pp^{\lim}:=\lim_{\lambda \to 0} \widetilde\Pp_\lambda^{\rm Ge}$ with $\widetilde\Pp_\lambda^{\rm Ge}$ as in \eqref{eq:PPlambdaGeSigma}.
\end{cor}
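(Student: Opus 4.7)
The plan is to reduce the statement to the analogous localisation formula on differential forms (Theorem~\ref{thm:limPglambda}) via the Clifford map and then to translate back to the spinor side using Proposition~\ref{prop:tildeomegavsomega}. First, I would observe that by the very definition of rescalability in ${\rm Diff}(M,\Sigma M)$ recalled just before Proposition~\ref{prop:conditionrescalibilityClifford}, the hypothesis that $P$ is rescalable means that $c^g(P)\in{\rm Diff}(M,\Lambda T^*M)$ is rescalable. Moreover $c^g(P)$ is geometric with respect to $g$: applying $c^g$ to a monomial ${\rm e}^I\cdot_g$ produces a combination of wedge and interior products whose coefficients remain polynomial in the jets of the vielbeins, so the geometric coefficients $(P_\gamma)_I(\cdot,g)$ of $P$ get carried to geometric coefficients of $c^g(P)$ of the same Gilkey order. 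Finally, Lemma~\ref{lem:sigmalo} ensures that $\theta$ is an Agmon angle for $c^g(P)$ as well.

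Next I would apply Theorem~\ref{thm:limPglambda} to $c^g(P)$. Comparing the definition \eqref{eq:PPlambdaGeLambda} used there for an operator in ${\rm Diff}(M,\Lambda T^*M)$ with the definition \eqref{eq:PPlambdaGeSigma} used for $P\in{\rm Diff}(M,\Sigma M)$, one sees that the Getzler-rescaled family associated to $c^g(P)$ in the sense of \eqref{eq:PPlambdaGeLambda} coincides with $\widetilde{\mathbb{P}}_\lambda^{\rm Ge}$ of the corollary, so their limits as $\lambda\to 0$ coincide with $\widetilde{\mathbb{P}}^{\lim}$. Theorem~\ref{thm:limPglambda} therefore yields
\[
\widetilde{\omega}_{\log_\theta(c^g(P))}^{\rm Res}(p)=\widetilde{\omega}_{\log_\theta(\widetilde{\mathbb{P}}^{\lim})}^{\rm Res}(x),\qquad\forall x\in U_p.
\]

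To finish, I would invoke Proposition~\ref{prop:tildeomegavsomega} at the point $p$. Since $P$ is even for the $\Z_2$-grading, that proposition gives
\[
\widetilde{\omega}_{\log_\theta(c^g(P))}^{\rm Res}(p)=j_g(p)(-2{\rm i})^{-n/2}\omega_{\log_\theta(P)}^{\rm sRes}(p).
\]
In the normal geodesic coordinates at $p$ used throughout the paper, one has $g_{ij}(p)=\delta_{ij}$ by \eqref{eq:metricnormalcoord}, hence $j_g(p)=1$. Combining the two displayed equalities delivers
\[
\widetilde{\omega}_{\log_\theta(\widetilde{\mathbb{P}}^{\lim})}^{\rm Res}(x)=(-2{\rm i})^{-n/2}\omega_{\log_\theta(P)}^{\rm sRes}(p),\qquad\forall x\in U_p,
\]
which is the claim.

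The proof is essentially a diagram chase between the two localisation frameworks, so no real obstacle is expected. The only subtlety worth flagging is the compatibility check between \eqref{eq:PPlambdaGeLambda} and \eqref{eq:PPlambdaGeSigma} (i.e.\ that the ``$\widetilde{\mathbb{P}}^{\lim}$'' appearing in Theorem~\ref{thm:limPglambda} applied to $c^g(P)$ is the same object as the one in the statement of the corollary), together with the identification $j_g(p)=1$ which relies crucially on the use of normal geodesic coordinates at the reference point $p$.
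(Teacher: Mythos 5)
Your proof is correct, and it uses the same underlying ingredients as the paper, just organized in the reverse order. The paper's own proof invokes Corollary~\ref{cor:relationomegatilde}, which bundles equation~\eqref{eq:tildeomegaflambda} with Proposition~\ref{prop:tildeomegavsomega} at the $\lambda$-dependent point ${\mathfrak f}_\lambda(\cdot)$, and then passes to the limit $\lambda\to 0$ using that $j_g\circ\mathfrak{f}_\lambda\to 1$ together with the rescalability assumption. You instead apply the already-established Theorem~\ref{thm:limPglambda} to $c^g(P)$ (after checking that $c^g(P)$ is geometric, rescalable, and has the same Agmon angle), and only then translate back to the spinor side by evaluating Proposition~\ref{prop:tildeomegavsomega} at the single point $p$, where $j_g(p)=1$ in normal geodesic coordinates. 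In other words, you take the $\lambda\to 0$ limit first on the $\Lambda T^*M$ side and apply the Berezin--supertrace identity once at $p$, whereas the paper applies the Berezin--supertrace identity at every $\lambda$ and then takes the limit. Both arguments succeed for the same reasons; yours is perhaps marginally cleaner in that it reuses Theorem~\ref{thm:limPglambda} rather than unfolding Proposition~\ref{prop:pullbacklocalform} a second time, and it spares the reader the observation that $j_g\circ\mathfrak{f}_\lambda\to 1$ by replacing it with the equivalent pointwise statement $j_g(p)=1$. Your compatibility check between \eqref{eq:PPlambdaGeLambda} and \eqref{eq:PPlambdaGeSigma}, and the justification that $c^g(P)$ is geometric of the same Gilkey order, are both sound and worth having spelled out.
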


\begin{proof}
Using the relation in Corollary~\ref{cor:relationomegatilde}, the fact that $P$ is rescalable and that $j_g\circ\mathfrak{f}_\lambda\to 1$ as $\lambda\to 0$ yield the result.
\end{proof}

\section{The rescaled square of the Dirac operator}
%\label{sec:Diracsquare}

In this section, we show that whereas the Dirac operator $\slashed D$ (which is a geometric operator and hence so its square) is not rescalable, its square is. We then apply the results of the previous section to {$P= \slashed D^2$} and compute $\widetilde \Pp^{\lim}$ with the help of Proposition~\ref{prop:conditionrescalibilityClifford}. This allows to find the expression of $\widetilde \Pp^{\lim}$ in terms of the curvature operator of $M$ as in \cite{Ge}. We then derive from Corollary~\ref{cor:resclimClifford} a localisation formula (\ref{eq:resD2}) for the graded residue of the logarithm of $\slashed D^2$.

We recall that the {\it Dirac operator } on a spin manifold $(M^n,g)$ is the differential operator of order one given by
$\slashed D:= \sum_{i=1}^n {\rm e}^i\cdot_g\nabla_{e_i}$,
	where $\nabla_{e_i}$ is the spinorial Levi-Civita covariant derivative in the direction of $e_i$. Using \eqref{eq:connspin}, it reads as
 \[
 \slashed 	D=\sum_{i=l} a_i^l(\cdot,g) {\rm e}^i\cdot_g \partial_{x^m}+\sum_{i\neq l} a_i^l(\cdot,g) {\rm e}^i\cdot_g \partial_{x^l} +\frac{1}{4}\sum_{i,l,s,t=1}^n\tilde\Gamma_{l s}^t(\cdot,g) a_i^l(\cdot,g) {\rm e}^i\cdot_g {\rm e}^s\cdot_g {\rm e}^t\cdot_g.
 \]
The Dirac operator is geometric. Indeed, the above expression involves a sum of three terms, each of which is expressed in terms of jets of vielbeins \big(see Example~\ref{ex:geompolTM} for $\widetilde \Gamma$\big) and satisfies condition (\ref{eq:geomopGi}). Yet it is not rescalable. With the notations of Proposition~\ref{prop:conditionrescalibilityClifford} with $P=\slashed D$, the condition $|I |\leq \Theta((P_\gamma)_I )$ is not satisfied in the first of the three sums since $|I|=1$ and $\Theta((P_\gamma)_I)=0$. Recall that the valuation of $a_i^l(\cdot,g)$ is zero if $l=i$ and at least $2$ otherwise.

\begin{prop}\label{prop:Dg2Geztlerrescalable} The square of the Dirac operator $\slashed 	D^2$ is a rescalable geometric differential operator. Setting $P:= \slashed D^2$, the operator $\widetilde \Pp_\lambda^{\rm Ge}$ in \eqref{eq:PPlambdaGeSigma}, read in a local trivialisation of~$\Sigma M$ at point $x$ in~$U_p$ obtained by parallel transport along geodesics, converges to
\[
- \sum_i\biggl( \partial_{x^i}-\frac{1}{8}\sum_{j,s,t} R_{i j st}(p) x^j {\rm e}^s\wedge {\rm e}^t\wedge\biggr)^2,
\]
where $\{e_1(p),\dots,e_n(p)\}$ is an orthonormal frame $T_pM$.
\end{prop}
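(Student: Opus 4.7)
The plan is to combine the Lichnerowicz formula $\slashed D^2=\nabla^*\nabla+\frac{1}{4}{\rm Scal}$ with Proposition~\ref{prop:conditionrescalibilityClifford}. First, $\slashed D^2$ is a geometric differential operator: by Example~\ref{ex:nablaXdiffgeom} both the Clifford multiplication operators ${\rm d}x^i\cdot_g$ and the spin covariant derivatives $\nabla_{\partial/\partial x^i}$ are geometric with respect to $g$, hence so is $\slashed D=\sum_i {\rm d}x^i\cdot_g\nabla_{\partial/\partial x^i}$, and Proposition~\ref{prop:productgeom} then gives that $\slashed D^2$ is geometric.

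For rescalability, in the parallel transport trivialisation of $\Sigma M$ and in normal geodesic coordinates $(x^1,\ldots,x^n)$ at $p$, I would expand
\[
\slashed D^2=-g^{ij}\bigl(\nabla_{\partial_i}\nabla_{\partial_j}-\Gamma^k_{ij}\nabla_{\partial_k}\bigr)+\frac{1}{4}{\rm Scal},\quad \nabla_{\partial_i}=\partial_{x^i}+A_i,\quad A_i:=\frac{1}{4}\sum_{s,t}\tilde\Gamma^t_{is}\,{\rm e}^s\cdot_g{\rm e}^t\cdot_g,
\]
and recast it in the form \eqref{eq:geometricoperatorspin}. Using ${\rm val}(g^{ij}-\delta^{ij})\geq 2$, ${\rm val}(\Gamma^k_{ij})\geq 1$ and ${\rm val}(\tilde\Gamma^t_{is})\geq 1$ with leading term $-\frac{1}{2}R_{ijst}(p)\,x^j$ from \eqref{eq:Gammatilde}, the balance $|I|\leq \Theta((P_\gamma)_I)$ can be checked monomial by monomial: each pair ${\rm e}^s\cdot_g{\rm e}^t\cdot_g$ contributing $2$ to $|I|$ is paired with a factor $\tilde\Gamma$ of valuation at least $1$, and each quartic piece $A_iA_j$ comes with two such $\tilde\Gamma$'s; the $\frac{1}{4}{\rm Scal}$ summand has $|I|=0$, while every monomial in its jet expansion has $\Theta\geq 2$ since ${\rm Scal}$ is built from second derivatives of vielbein jets. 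The rescalability condition of Proposition~\ref{prop:conditionrescalibilityClifford} thus holds.

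By Proposition~\ref{prop:conditionrescalibilityClifford}, only monomials with $|I|=\Theta$ survive in $\widetilde\Pp^{\lim}$; for those, Lemma~\ref{lem:lambdadeltaB} extracts the leading jet coefficient, and \eqref{eq:limUlambdac} yields $\lambda^{|I|}U_\lambda^\sharp(c^g({\rm e}^I))\to {\rm e}^I\wedge$. The term $-g^{ij}\partial_{x^i}\partial_{x^j}$ contributes $-\sum_i\partial_{x^i}^2$; the cross term $-2g^{ij}A_i\partial_{x^j}$ contributes $\frac{1}{4}\sum_{i,j,s,t}R_{ijst}(p)\,x^j\,{\rm e}^s\wedge{\rm e}^t\wedge\partial_{x^i}$ after inserting the leading expansion of $\tilde\Gamma$; the Clifford-quartic piece $-g^{ij}A_iA_j$ contributes $-\sum_i\bigl(\frac{1}{8}R_{ijst}(p)x^j{\rm e}^s\wedge{\rm e}^t\wedge\bigr)^2$; and the remaining pieces $-g^{ij}\partial_{x^i}(A_j)$, the Christoffel correction $g^{ij}\Gamma^k_{ij}\nabla_{\partial_k}$, and $\frac{1}{4}{\rm Scal}$ all have $|I|<\Theta$ and hence drop out. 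Assembling these contributions, and using $R_{iist}(p)=0$ by antisymmetry to absorb the derivative-of-coefficient cross term, the sum is the expansion of $-\sum_i\bigl(\partial_{x^i}-\frac{1}{8}\sum_{j,s,t}R_{ijst}(p)\,x^j\,{\rm e}^s\wedge{\rm e}^t\wedge\bigr)^2$, which is the claimed formula.

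The main obstacle will be the monomial-level bookkeeping required to check $|I|=\Theta$ throughout the Lichnerowicz expansion of $\slashed D^2$. I expect the subtlest points to be (i) confirming that every monomial entering the jet expansion of ${\rm Scal}$ has $\Theta\geq 2$, so that the scalar curvature term does not contribute to $\widetilde\Pp^{\lim}$ despite naively having $|I|=0$; and (ii) handling the quartic Clifford piece $A_iA_j$, whose product ${\rm e}^s\cdot_g{\rm e}^t\cdot_g{\rm e}^u\cdot_g{\rm e}^r\cdot_g$ decomposes into components of Clifford degree $0,2,4$, only the top-degree $|I|=4$ component of which survives the $|I|=\Theta$ constraint.
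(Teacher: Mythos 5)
Your proposal follows essentially the same route as the paper: the Schr\"odinger--Lichnerowicz formula, expansion of $\nabla_{\partial_{x^i}}=\partial_{x^i}+A_i$ in the parallel-transport gauge, a term-by-term check of $|I|\leq\Theta$ via Proposition~\ref{prop:conditionrescalibilityClifford}, and extraction of the limits of each piece; the paper organizes the same expansion into its terms (I)--(V) plus $\tfrac14{\rm Scal}$.

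One point needs correcting. You claim that $-g^{ij}\partial_{x^i}(A_j)$ ``has $|I|<\Theta$ and hence drops out.'' That is not quite right: this term has Clifford degree $|I|=2$ and $\Theta\geq 2$ (from $\Theta(\partial_{x^i}\tilde\Gamma^t_{js})\geq 2$ and $\Theta(g^{ij})\geq 0$ with equality on the diagonal), so $|I|=\Theta$ is attained and, by Proposition~\ref{prop:conditionrescalibilityClifford}, the term \emph{survives} in $\widetilde\Pp^{\lim}$. The paper accordingly computes its limit explicitly as $\tfrac18\sum_{i,j,s,t}\delta^{ij}R_{ijst}(p)\,{\rm e}^s\wedge{\rm e}^t\wedge$, which vanishes only because $R_{iist}(p)=0$ by antisymmetry. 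You do invoke $R_{iist}=0$ ``to absorb the derivative-of-coefficient cross term,'' so you have the right ingredient, but the prose is inconsistent (if the term had $|I|<\Theta$ there would be nothing to absorb). Phrase it as a surviving term with vanishing limit rather than one ruled out by the $|I|<\Theta$ criterion; with this adjustment the proposal matches the paper's argument.
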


\begin{proof} Since $\slashed D$ is geometric so is its square $\slashed D^2$ by Proposition~\ref{prop:productgeom}. We now show it is rescalable. Since the action by Clifford multiplication ${\rm e}^i\cdot_g$ of the vectors of an orthonormal frame of $T_xM$ obtained from $\{e_1(p),\dots,e_n(p)\}$ by parallel transport is constant in $x$, in the following we will simply write $\{e_1 ,\dots,e_n \}$. We use the Schr\"odinger--Lichnerowicz formula \cite{LM} to write
 \begin{align*}
 \slashed 	D^2\big|_x={}&-{\sum_{i,j=1}^n}g^{ij}(x)\bigl(\nabla_{x^i}\nabla_{x^j}-\Gamma_{ij}^k(x, g)\nabla_{x^k}\bigr)+\frac{1}{4}{\rm Scal}(x)\\
 ={}&-{\sum_{i,j=1}^n}{\sum_{k,l=1}^n}g^{ij}(x)\biggl(\partial_{x^i}+\frac{1}{4}\tilde\Gamma_{i k}^l(x,g) {\rm e}^k\cdot_g {\rm e}^l\cdot_g\biggr) {\sum_{s,t=1}^n}\left(\partial_{x^j}+\frac{1}{4}\tilde\Gamma_{j s}^{t}(x,g) {\rm e}^{s}\cdot_g {\rm e}^{t}\cdot_g\right)\\
 &{} +{\sum_{i,j, k=1}^n} g^{ij}(x)\Gamma_{ij}^k(x,g){\sum_{s,t=1}^n}\biggl(\partial_{x^k}+\frac{1}{4}\tilde\Gamma_{k s}^{t}(x, g){\rm e}^s\cdot_g {\rm e}^t\cdot_g\biggr)+\frac{1}{4}{\rm Scal}(x)\\
 ={}&\underbrace{-{\sum_{i,j=1}^n}g^{ij}(x)\partial^2_{x^i x^j}}_{\rm (I)}-\underbrace{{\sum_{i,j=1}^n}\frac{1}{4}g^{ij}(x){\sum_{s, t=1}^n}\partial_{x^i}\bigl(\tilde\Gamma_{j s}^{t}(x,g)\bigr){\rm e}^s\cdot_g {\rm e}^{t}\cdot_g}_{\rm (II)}\\
 &{}-\underbrace{{\sum_{i,j, s,t=1}^n}\frac{1}{2}g^{ij}(x)\tilde\Gamma_{js}^{t}(x,g) {\rm e}^{s}\cdot_g {\rm e}^{t}\cdot_g\partial_{x^i}}_{\rm (III)}\\
 &{} \underbrace{-\frac{1}{16}{\sum_{i,j=1}^n}\sum_{k\neq l, s\neq t} g^{ij}(x)\tilde\Gamma_{i k}^l(x,g)\tilde\Gamma_{j s}^{t}(x,g){\rm e}^k\cdot_g {\rm e}^l\cdot_g {\rm e}^{s}\cdot_g {\rm e}^{t}\cdot_g}_{\rm (IV)}\\
 &{}+\underbrace{{\sum_{i,j, k=1}^n}g^{ij}(x)\Gamma_{ij}^k(x,g)\Biggl(\partial_{x^k}+\frac{1}{4}{\sum_{s,t=1}^n}\tilde\Gamma_{k s}^{t}(x, g){\rm e}^s\cdot_g {\rm e}^t\cdot_g\Biggr)}_{\rm (V)}+\frac{1}{4}{\rm Scal}(x),
 \end{align*}
 where ${\rm Scal}$ is the scalar curvature of the metric $g$. Combining equations~(\ref{eq:AAt}) and (\ref{eq:relationschristofellsymbols}) with the Koszul formula, we can express $\slashed D^2$ in terms of the vielbeins. To avoid lengthy computations, we only sketch the computation for the third term in the above equation to show that the relation $|I|\leq \Theta(({P_\gamma})_I)$ holds. Thus, the operator is rescalable. To show the inequality, we first observe that $|I|$ equals $2$. According to Examples~\ref{eq:examples}, the coefficient in (III) can be written as a sum of polynomials of the vielbeins such that the corresponding $\Theta$ is at least $2$ \big(since the one corresponding to $\tilde{\Gamma}_{i m}^l(x,g)$ is at least $2$\big). With the help of \eqref{eq:symbchriglamdabis} and \eqref{eq:Gammatilde}, the limit of the rescaled operator of (III) is equal to
 \[\mathop{\lim}\limits_{\lambda\to 0} \lambda^{-2}\sum_{i,j,s,t} g_\lambda^{ij}(x)\tilde\Gamma_{j s}^t(x,g_\lambda)\bigl({\rm e}^s\wedge {\rm e}^t\wedge\bigr)\partial_{x^i}=-\frac{1}{2}\sum_{i,j,k,s,t}\delta^{ij}R_{j k s t}( p)x^k\bigl({\rm e}^s\wedge {\rm e}^t\wedge\bigr)\partial_{x^i}.\]
The same thing can be done for the first term (I) which converges to $-\sum_i \partial^2_{x^i}$. The second term~(II) converges to
$\frac{1}{8}\sum_{i,j,s,t}\delta^{ij} R_{ijst}(p){\rm e}^s \wedge {\rm e}^{t}\wedge.$
The fourth term (IV) tends to
\[-\frac{1}{64}\sum_{i,j,k,l,s,t,q} \delta^{ij} R_{iq k l}(p) R_{jl s t}(p)x^q x^{l}{\rm e}^k\wedge {\rm e}^l\wedge {\rm e}^{s}\wedge {\rm e}^{t}\wedge.\]
The other terms converge to $0$. Therefore, we deduce that
\begin{align*}
 \mathop{\lim}\limits_{\lambda\to 0} \widetilde \Pp_\lambda^{\rm Ge}={}&-\sum_i \partial^2_{x^i}+\frac{1}{8}\sum_{i,j,s,t}\delta^{ij} R_{ijst}(p){\rm e}^s \wedge {\rm e}^{t} +\frac{1}{4}\sum_{i,j,k,s,t}\delta^{ij}R_{j k s t}( p)x^k\bigl({\rm e}^s\wedge {\rm e}^t\wedge\bigr)\partial_{x^i}\\
 &{}-\frac{1}{64}\sum_{i,j,k,l,s,t,q} \delta^{ij} R_{iq k l}(p) R_{jl st}(p)x^q x^{l}{\rm e}^k\wedge {\rm e}^l\wedge {\rm e}^{s}\wedge {\rm e}^{t}\wedge\\
 ={}&-\sum_i\biggl(\partial_{x^i}-\frac{1}{8}\sum_{j,s,t}R_{i j st}(p) x^j {\rm e}^s\wedge {\rm e}^t\wedge\biggr)^2,
\end{align*}
which confirms the fact, which we checked by hand in the previous tedious computations, that the operator is rescalable.
 \end{proof}
		
We can apply Corollary~\ref{cor:resclimClifford} to the square of the Dirac operator $P=\slashed D^2$. Since the operator~$\slashed D^2$ is non negative self-adjoint, it has a well defined logarithm $\log_\theta \bigl(\slashed 	D^2\bigr)$ (here $\theta=\pi$).
Hence, we have that	
\begin{equation}
\label{eq:resD2} 	
\widetilde\omega_{\log_\theta (\widetilde \Pp^{\lim})}^{{ {\rm Res}}}(x)= (-2{\rm i})^{-n/2} 	\omega_{\log_\theta (\slashed D^2) }^{\rm sRes}(p),
\end{equation}
for any $x\in U_p$. The computation of the Wodzicki residue on the right-hand side is tedious. In~\cite{Sc}, it is derived from the heat-kernel asymptotics of $\widetilde \Pp^{\lim}$.

\appendix
\section{Vielbein} \label{sec:appendix1}

Let $(M^n,g)$ be a Riemannian manifold. Let $F_p(\cdot,g):=\big\{\frac{\partial}{\partial x^1},\dots, \frac{\partial}{\partial x^n}\big\}$ be the cartesian frame on $U_p$ built from the geodesic coordinates around $p$ defined in (\ref{eq:geodesiccoord}). From a given orthonormal basis $e_1(p) ,\dots,e_n(p) $ of $T_pM$ at $p\in M$, we build a local orthonormal frame $O_p(\cdot, g):=\{e_1(\cdot, g),\dots,e_n(\cdot, g)\},$ of $TM$ obtained by the parallel transport along small geodesics as in~(\ref{eq:Op}).

A linear map $A_p(\cdot, g)\colon TM\to TM$ (resp.\ its inverse $B_p(\cdot,g)$) which takes the basis $F_p(\cdot, g)$ to $O_p(\cdot, g)$ (resp.\ $O_p(\cdot, g)$ to $F_p(\cdot, g)$) can be represented by a $n\times n$ matrix $A=\bigl( a_i^l(\cdot, g)\bigr)$ (resp.\ $B=\bigl(b_l^i(\cdot,g)\bigr) $) with
\begin{gather}
\frac{ \partial}{\partial x^i}=\sum_{l=1}^n a_i^l (\cdot, g) e_l(\cdot,g) ,\qquad \forall i\in [[1, n]],\nonumber\\
 e_l(\cdot,g)= \sum_{j=1}^nb_l^j(\cdot, g) \frac{\partial}{\partial x^j},\qquad \forall l\in [[1, n]].\label{eq:viedbein}
\end{gather}
 Also, we have that
\begin{equation*}%\label{eq:viedbeinbis}
{\rm d} x^i=\sum_{m=1}^n b_l^i(\cdot, g) {\rm e}^l(\cdot,g) ,\qquad \forall i\in [[1, n]],\qquad {\rm e}^l(\cdot,g)= \sum_{j=1}^na_j^l(\cdot, g) {\rm d} x^j,\qquad \forall l\in [[1, n]].
\end{equation*}
With these conventions, and dropping the explicit mention of $p$ whenever this does not lead to confusion, we write $A(\cdot, g)=\bigl(a_i^l(\cdot, g)\bigr)_{i,l}$ and we have that
\begin{equation}\label{eq:AAt}
\sum_{l=1}^n a_i^l(\cdot, g)a_j^l(\cdot, g)=g_{ij}(\cdot) \qquad\text{or equivalently}\ A A^t= G,
\end{equation}
where $G$ has entries $g_{ij}(\cdot)$. Similarly, we have
%so that $B^t(X, g) B(X, g)= (A^{-1}(X, g))^t A^{-1}(X, g)= (A(X, g) A^t(X, g))^{-1}=G^{-1}(X, g)$ i.e.,
\begin{equation}\label{eq:BBt}
\sum_{l=1}^n b_l^i(\cdot,g) b_l^j(\cdot,g)=g^{ij}(\cdot)\qquad\text{or equivalently} \
B^t B=G^{-1}.
\end{equation}
Taking the scalar product of the first equation in (\ref{eq:viedbein}) with $e_m$ yields after inserting the second equation
\begin{equation}\label{eq:BC}
a_i^l(\cdot, g)= \sum_{l=1}^n g_{ij}(\cdot) b_l^j(\cdot, g) \qquad \text{and} \qquad b_l^j(\cdot, g)= \sum_{l=1}^n g^{ij}(\cdot) a_i^l(\cdot, g).
\end{equation}
The second equation in (\ref{eq:BC}) is also derived by multiplying by the inverse of the metric.

 \begin{lem}\label{lem:vielbeincomp} At any point in $U_p$ and $\lambda>0$, we have
\[a_{i}^{l}(\cdot,g_\lambda)=a_{i}^{l}(\mathfrak{f}_\lambda(\cdot),g) \qquad\text{and}\qquad b_{l}^{i}(\cdot,g_\lambda)=b_{l}^{i}( {\mathfrak f}_\lambda(\cdot),g).\]
 \end{lem}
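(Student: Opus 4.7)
\smallskip

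\noindent\textbf{Proof plan.} The key observation is that the two metrics $g_\lambda=\lambda^{-2}\mathfrak{f}_\lambda^{*}g$ and $\mathfrak{f}_\lambda^{*}g$ differ by a constant conformal factor, so they share the same Levi-Civita connection, the same geodesics and the same parallel transport. Moreover, by construction $\mathfrak{f}_\lambda\colon(U_p,\mathfrak{f}_\lambda^{*}g)\to(U_p^\lambda,g)$ is an isometry fixing $p$, with differential at $p$ equal to $\lambda\,\mathrm{id}$ on $T_pM$. In particular $g_\lambda|_p=g|_p$, so the basis $\{e_1(p),\dots,e_n(p)\}$ is orthonormal for $g_\lambda$ as well.

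First I would check that the normal geodesic coordinates at $p$ associated to $g_\lambda$ and to $\{e_j(p)\}$ coincide with the coordinates $X=(x^1,\dots,x^n)$ used for $g$. Since $\mathfrak{f}_\lambda$ sends geodesics of $\mathfrak{f}_\lambda^{*}g$ (equivalently of $g_\lambda$) through $p$ to geodesics of $g$ through $p$, and since in coordinates $\mathfrak{f}_\lambda$ is the dilation $x\mapsto\lambda x$, the $g_\lambda$-geodesic with initial velocity $\sum_j v^je_j(p)$ is precisely $t\mapsto\exp_p\!\bigl(t\sum_jv^je_j(p)\bigr)$, hence the two coordinate systems agree. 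Equivalently, one can verify this directly from the identity $\Gamma_{ij}^{k}(\cdot,g_\lambda)=\lambda\,\Gamma_{ij}^{k}(\mathfrak{f}_\lambda(\cdot),g)$ recalled just above \eqref{eq:symbchriglamdabis}, using the fact that normal coordinates are characterised by $\Gamma_{ij}^{k}(x)x^j=0$.

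Next I would compute $e_j(x,g_\lambda)$, defined as the parallel transport of $e_j(p)$ along $t\mapsto\exp_p(t\mathbf{x})$ for the connection of $g_\lambda$. Using that $\mathfrak{f}_\lambda$ is an isometry from $(U_p,\mathfrak{f}_\lambda^{*}g)$ onto $(U_p^\lambda,g)$ and that the connections of $g_\lambda$ and $\mathfrak{f}_\lambda^{*}g$ coincide, parallel transport intertwines, i.e.
\[
e_j(x,g_\lambda)=(\mathfrak{f}_\lambda)_{*}^{-1}\bigl|_{\mathfrak{f}_\lambda(x)}\bigl(\text{$g$-parallel transport of }(\mathfrak{f}_\lambda)_{*}|_p\,e_j(p)\text{ along }\mathfrak{f}_\lambda\circ\gamma\bigr).
\]
Since $(\mathfrak{f}_\lambda)_{*}|_p=\lambda\,\mathrm{id}$ and $\mathfrak{f}_\lambda\circ\gamma$ is a (reparametrisation of a) $g$-geodesic from $p$ to $\mathfrak{f}_\lambda(x)$, the parallel transport on the right-hand side equals $\lambda\,e_j(\mathfrak{f}_\lambda(x),g)$. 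Expanding $e_j(\mathfrak{f}_\lambda(x),g)=\sum_i b_j^{i}(\mathfrak{f}_\lambda(x),g)\,\partial/\partial x^i|_{\mathfrak{f}_\lambda(x)}$ and using $(\mathfrak{f}_\lambda)_{*}^{-1}|_{\mathfrak{f}_\lambda(x)}(\partial/\partial x^i|_{\mathfrak{f}_\lambda(x)})=\lambda^{-1}\partial/\partial x^i|_x$ from \eqref{eq:derivativedifferent}, the two factors of $\lambda$ cancel and one reads off
\[
b_l^{i}(x,g_\lambda)=b_l^{i}(\mathfrak{f}_\lambda(x),g).
\]
The matrix $A$ being the inverse of $B$ at each point, the first identity $a_i^{l}(x,g_\lambda)=a_i^{l}(\mathfrak{f}_\lambda(x),g)$ follows immediately; alternatively, one can re-derive it from \eqref{eq:BC} combined with $(g_\lambda)_{ij}(x)=g_{ij}(\mathfrak{f}_\lambda(x))$.

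The only genuine subtlety is the bookkeeping with the factor $\lambda$ in step~3: one must carefully distinguish the pushforward of $(\mathfrak{f}_\lambda)_{*}$ at $p$ (which rescales the initial vector) from its inverse at $x$ (which rescales the frame $\partial/\partial x^i$). These two rescalings are exactly compensating, which explains why the vielbein identity involves \emph{no} factor of $\lambda$.
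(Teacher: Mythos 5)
Your proof is correct and follows essentially the same route as the paper's, but with more explicit bookkeeping. The paper's argument packages the key step into Lemma~\ref{ref:eq:patrgglambda}, which asserts that the $g_\lambda$-parallel frame $\bar e_l(\cdot,g_\lambda)$ equals $e_l(\mathfrak{f}_\lambda(\cdot),g)$, and then simply compares the two vielbein expansions of the common vector $\partial/\partial x^i\circ\mathfrak{f}_\lambda$. You instead re-derive the content of that lemma from first principles: you observe that $g_\lambda$ and $\mathfrak{f}_\lambda^*g$ share the same Levi-Civita connection (constant conformal factor), that $\mathfrak{f}_\lambda$ is an isometry $(U_p,\mathfrak{f}_\lambda^*g)\to(U_p^\lambda,g)$, that the normal geodesic coordinates for $g_\lambda$ and $g$ coincide, and then you track carefully how the factor $\lambda$ from $(\mathfrak{f}_\lambda)_*|_p=\lambda\,\mathrm{id}$ cancels against the factor $\lambda^{-1}$ from pulling back the coordinate frame via \eqref{eq:derivativedifferent}. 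This explicit cancellation is something the paper leaves implicit in the notation $\partial/\partial x^i\circ\mathfrak{f}_\lambda$ and in the statement of Lemma~\ref{ref:eq:patrgglambda} (where $\bar e_l(x,g_\lambda)\in T_xM$ is identified with $e_l(\mathfrak{f}_\lambda(x),g)\in T_{\mathfrak{f}_\lambda(x)}M$ via components in the coordinate frame). Your version also spells out the coincidence of normal coordinates, which the paper takes for granted. So there is no gap; you have supplied a slightly more self-contained and careful rendering of the paper's own argument, and the comment you make at the end about the two compensating rescalings is precisely the right explanation for why no $\lambda$ survives.
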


\begin{proof}
To prove the equality $a_{i}^{l}(\cdot,g_\lambda)=a_{i}^{l}(\mathfrak{f}_\lambda(\cdot),g)$, we let $\{{\bar e}_1(\cdot,g_\lambda),\dots, {\bar e}_n(\cdot,g_\lambda)\}$ denote the orthonormal frame obtained by parallel transport from $\{e_1(p),\dots,e_n(p)\}$ with respect to the metric $g_\lambda$. We know from Lemma~\ref{ref:eq:patrgglambda} that ${\bar e}_i(\cdot,g_\lambda)=e_i(\mathfrak{f}_\lambda(\cdot),g)$. Using \eqref{eq:viedbein} with respect to the metric $g_\lambda$, we have that
\[\frac{\partial}{\partial x^i}\circ \mathfrak{f}_\lambda=\sum_{l=1}^n a_i^l(\cdot,g_\lambda) \bar{e}_l(\cdot,g_\lambda).\]
Also, \eqref{eq:viedbein} applied to the point $\mathfrak{f}_\lambda(\cdot)$ gives that
\[\frac{\partial}{\partial x^i}\circ \mathfrak{f}_\lambda=\sum_{l=1}^n a_i^l(\mathfrak{f}_\lambda(\cdot),g) e_l(\mathfrak{f}_\lambda(\cdot),g).\]
Comparing both equations yields the result. The second equality can be proven in the same way.
\end{proof}

\section{Complex powers and logarithms of elliptic operators}\label{appendix2}
Let $E\to M$ be a vector bundle $E$ over $M$ of rank $k$. We consider an operator $Q$ in $\Psi_{\rm cl}(M, E)$ of positive real order $m$ with angle $\theta\in [0,2\pi)$. For $0<\delta$, we define the contour along the ray~$L_\theta$ around the spectrum of $Q$:
\[\Gamma_\theta=\Gamma_\theta^1\cup \Gamma_\theta^2\cup \Gamma_\theta^3 ,\]
where
\[\Gamma_\theta^1:=\bigl\{r{\rm e}^{{\rm i}\theta}, \delta< r\bigr\},\qquad \Gamma_\theta^2=\bigl\{\delta {\rm e}^{{\rm i}t}, \theta-2\pi< t<\theta\bigr\},\qquad \Gamma_\theta^3=\bigl\{r {\rm e}^{{\rm i}(\theta-2\pi)}, \delta< r\bigr\}.\]
For any operator $Q\in \Psi_{\rm cl}(M, E)$ with positive real order $m$ and Agmon angle $\theta$, the resolvent $\bigl(Q-\lambda\bigr)^{-1}$ is a bounded linear operator on $L^2(M, E)$ with operator norm $O(\vert \lambda\vert^{-1})$ as $\lambda$ tends to infinity in a sector around the contour $\Gamma_\theta$. For $\operatorname{Re}(z)<0$,
	the Cauchy integral \cite[equation~(1.5.7.1)]{Sc}
	\begin{equation*}
		Q_\theta^z:=\frac{{\rm i}}{2\pi}\int_{\Gamma_{\theta}} \lambda_\theta^z (Q-\lambda)^{-1} {\rm d}\lambda,\end{equation*}
	%\label{eq:Qthetaz}
	converges in the operator norm to a bounded linear operator on $L^2(M,E) $. Here $\lambda_\theta^z= \vert \lambda\vert^z {\rm e}^{{\rm i} z {\rm arg}_\theta\lambda}\!$, with ${\rm arg}_\theta\lambda$ the argument of $\lambda$ in $]\theta, \theta+2\pi[$.
For any real $s$, it also defines a linear operator $Q_\theta^z\colon H^s(M, E)\to H^s (M, E)$ on the Sobolev closure $H^s(M,E)$ of the space $C^\infty(M,E)$ of smooth sections of $E$ and, therefore, induces a linear operator on $C^\infty(M,E)$. Complex powers of elliptic operators $Q_\theta^z$ can be extended to any $z\in \C$ with $\operatorname{Re}(z)<k$ for any $k\in \N$ by
	\begin{equation}\label{eq:zmoinsk} Q_\theta^{z}:= Q^k Q_\theta^{z-k},\end{equation} thus giving rise to a group of well-defined complex powers $Q_\theta^z$ acting on $C^\infty(M,E)$ (see \cite[Section~1.5.7.1]{Sc})
	\[Q_\theta^0={\rm Id}, \quad Q_\theta^{z+w}=Q_\theta^{z} Q_\theta^{w},\qquad \forall (z,w)\in \C^2.\]
For $\operatorname{Re}(z)<0$, the symbol of $Q_\theta^{z}$ reads \cite[formula~(4.8.2.6)]{Sc}
\begin{equation*}	
\sigma(Q_\theta^{z})\sim\frac{{\rm i}}{2\pi} \int_{\Gamma_\theta} \lambda_\theta^{z} (\sigma(Q)-\lambda)^{*-1} {\rm d}\lambda,\end{equation*}
where $\sigma(Q)$ is the symbol of $Q$, the star stands for the inverse in the symbol algebra and
\begin{align*}
(\sigma(Q)-\lambda)^{*-1}&{}= \sigma_L(Q)^{*-1} \bigl(\sigma(Q)\sigma_L(Q)^{*-1} -\lambda \sigma_L(Q)^{*-1} \bigr)^{*-1}\\
&{}=\sigma_L(Q)^{-1} (1+\text{symbol of order} <0)^{*-1}
\end{align*}
is obtained by means of an expansion in $\xi$.
For $\operatorname{Re}(z)<0$, the complex power $Q_\theta^z$ is a classical pseudodifferential operator of order $mz$, whose symbol has the asymptotic expansion
\begin{equation} \label{eq:sigmaQzexp}\sigma(Q_\theta^z)(x,\xi)\sim \sum_{j=0}^\infty\sigma_{mz-j}(Q_\theta^z)(x,\xi),\end{equation}
where $\sigma_{mz-j}(Q_\theta^z)(x,\xi)$ are the positively homogeneous functions of degree $mz-j$ given by {\cite[formula (4.8.2.7)]{Sc}:
\begin{equation}\label{eq:symbolqz}
\sigma_{mz-j}(Q_\theta^z)(x,\xi)=\frac{{\rm i}}{2\pi}\int_{\Gamma_\theta}\lambda_\theta^z\sigma_{-m-j}\bigl((Q-\lambda)^{-1}\bigr)(x,\xi) {\rm d}\lambda.
\end{equation}
For a complex number $z$ with $\operatorname{Re}(z)<k$ and $k\in \N$, the complex power $Q_\theta^z$ given by (\ref{eq:zmoinsk}) is also a classical pseudodifferential operator of order $mz$ as a product of classical pseudodifferential operators, $Q^k$ of order~$mk$ and $Q_\theta^{z-k}$ of order $m(z-k)$. It has an asymptotic expansion as in~(\ref{eq:sigmaQzexp}) whose homogeneous components are given by the product formula for symbols \cite[formula~(10.16)]{Sh}: \begin{equation}\label{sigmajzk}
\sigma_{mz-j}(Q_\theta^z)= \sum_{k,a+b+\vert \alpha\vert=j} \frac{(-{\rm i})^{\vert \alpha \vert}}{\alpha!} \partial_\xi^\alpha \sigma_{mk-a} \bigl(Q^k\bigr) \partial_x^\alpha\sigma_{m(z-k)-b}\bigl(Q_\theta^{z-k}\bigr),\qquad \forall j\in \Z_{\geq 0}. 	\end{equation} }
	
	For $z_0\in \C$ with $\operatorname{Re}(z_0)< 0$ and following the notations of \cite[formula (2.6.1.6)]{Sc}, we consider the Cauchy integral
	\begin{gather*}
		L_\theta(Q, z_0)
		:=\frac{{\rm i}}{2\pi}\int_{\Gamma_{\theta}} \log _\theta(\lambda) \lambda_\theta^{z_0} (Q-\lambda)^{-1} {\rm d}\lambda,
	\end{gather*}
	which is absolutely convergent and defines an bounded linear operator $ L_\theta(Q, z_0)\colon H^s(M,E)\longrightarrow H^{s-m\operatorname{Re}(z_0)-\epsilon}(M,E)$ for any real number $s$ and for any positive $\epsilon$.
	As we did for complex powers, we extend $ L_\theta(Q, z_0)$ to $\operatorname{Re}(z_0)<k$ for any $k\in \N$ by \cite[formula (2.6.1.7)]{Sc}
	\begin{equation*}	%\label{eq:LQz02}
		L_\theta(Q, z_0):= L_\theta(Q, z_0-k) Q^k. \end{equation*}
	thus defining $L_\theta(Q, z_0)$ for any $z_0\in \C$ such that
	\[L_\theta(Q, z_0)= L_\theta(Q, 0) Q_\theta^{z_0}= \log_\theta(Q) Q_\theta^{z_0}.\]
Here we have set $\log_\theta(Q):= L_\theta(Q, 0),$ which defines a bounded linear operator
\[
L_\theta(Q, z_0)\colon \ H^s(M,E)\to H^{s -\epsilon}(M,E)
\]
 for any real number $s$ and any $\epsilon>0$. Since $\log_\theta(\lambda) \lambda_\theta^z= \lambda_\theta^z \log_\theta (\lambda),$ extending instead by
	\begin{equation*} L_\theta(Q, z_0):= Q^k L_\theta(Q, z_0-k), \end{equation*} gives rise to the same family of operators and $\log _\theta (Q) Q_\theta^z=Q_\theta^z \log _\theta (Q)$ for any complex number~$z$.

\section[The supertrace versus the Berezin integral and the Getzler rescaling]{The supertrace versus the Berezin integral\\ and the Getzler rescaling} \label{sec:appendix3}

Following \cite[Section~3]{BGV}, we review the construction and properties of the Berezin integral together with its relation with the supertrace as well as Geztler's rescaling on differential forms. Let $V$ be a real vector space. A linear map $T\colon\Lambda V\to \mathbb{R}$ is called a Berezin integral if $T$ vanishes on~$\Lambda^kV$ for $k<n={\rm dim}(V)$. If $V$ is an oriented Euclidean vector space equipped with a scalar product~$g$, there exist a canonical Berezin integral defined as the projection of any element in~$\Lambda V$ onto its component on the $n$-form ${\rm e}^1\wedge \dots \wedge {\rm e}^n$ where $\{e_1,\dots,e_n\}$ is an orthonormal basis of~$(V,g)$. We will denote this Berezin integral by $T$
\[T\bigl({\rm e}^1\wedge \dots \wedge {\rm e}^n\bigr)=1,\qquad T\bigl({\rm e}^I\bigr)=0\qquad \text{if}\ |I|<n.\]
Let us now consider the Clifford algebra $\Cl(V)$ of $(V,g)$. The isometry $-{\rm Id}\in O(V,g)$ gives rise to the map
\begin{align*}
	\Phi\colon \ \Cl(V) &\longrightarrow \Cl(V),\nonumber\\ {\rm e}^{i_1}\cdot_g\cdots\cdot_g {\rm e}^{i_k}&\longmapsto (-1)^k{\rm e}^{i_1}\cdot_g\cdots\cdot_g {\rm e}^{i_k}
\end{align*}
with $i_1<\dots< i_k$. Here ``$\cdot_g$'' denotes the Clifford multiplication with respect to the metric~$g$. The map $\Phi$ clearly satisfies $\Phi^2={\rm Id}$. Therefore, we get a splitting of $\Cl(V)$ into
\[\Cl(V)=\Cl(V)^+\oplus \Cl(V)^-,\]
where $\Cl(V)^\pm:=\{a\in \Cl(V)\mid \Phi(a)=\pm a\}$. Now we have the proposition \cite[Proposition~3.19]{BGV}.

\begin{prop} \label{pro:spinorbundle}
Let $V$ be an oriented Euclidean vector space of even dimension~$n$. There exists a unique $\mathbb{Z}_2$-graded Clifford module $S=S^+\oplus S^-$, such that
	\[\Cl(V)\otimes \C\simeq \operatorname{End}(S).\]
	In particular, ${\rm dim} S^\pm=2^{\frac{n}{2}-1}$. Also, we have that $\Cl(V)^+\cdot S^\pm\subset S^\pm$ and $\Cl(V)^-\cdot S^\pm\subset S^\mp$. Therefore, we have the isomorphism \[\Cl(V)^+\otimes \C\simeq \operatorname{End}\bigl(S^\pm\bigr).\]
\end{prop}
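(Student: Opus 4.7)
My plan is to establish the proposition by invoking the classical structure theorem for complex Clifford algebras in even dimension and then introducing the chirality element to produce the $\Z_2$-grading.

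First, I would recall the well-known isomorphism $\Cl(V)\otimes\C\simeq M_{2^{n/2}}(\C)$ for $\dim V=n$ even, which can be proven by induction on $n/2$ using the identification $\Cl(\R^{n+2})\otimes\C\simeq \Cl(\R^n)\otimes\C\otimes_\C M_2(\C)$, or equivalently by exhibiting a faithful irreducible action on an explicit $2^{n/2}$-dimensional complex vector space built from a maximal isotropic subspace of $V\otimes\C$. This yields the existence and uniqueness (up to isomorphism) of an irreducible complex module $S$ of complex dimension $2^{n/2}$ together with the algebra isomorphism $\Cl(V)\otimes\C\simeq \operatorname{End}(S)$.

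Next, to construct the $\Z_2$-grading, I would introduce the chirality element $\omega:={\rm i}^{n/2}{\rm e}^1\cdot_g\cdots\cdot_g {\rm e}^n\in\Cl(V)\otimes\C$, where $\{e_1,\dots,e_n\}$ is a positively oriented orthonormal basis of $(V,g)$. A direct computation using the Clifford relations (with the factor ${\rm i}^{n/2}$ chosen precisely to cancel the sign produced by the $n/2$ transpositions needed for $\omega^2$) gives $\omega^2=1$. A second computation shows that $\omega$ commutes with every element of $\Cl(V)^+$ and anticommutes with every element of $\Cl(V)^-$, since each basis generator ${\rm e}^i$ anticommutes with $\omega$ when $n$ is even (the sign $(-1)^{n-1}=-1$). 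Then I would set
\[
S^\pm:=\{s\in S\mid \omega\cdot s=\pm s\},
\]
which gives a splitting $S=S^+\oplus S^-$ as $\C$-vector spaces. The (anti)commutation properties of $\omega$ immediately imply $\Cl(V)^+\cdot S^\pm\subset S^\pm$ and $\Cl(V)^-\cdot S^\pm\subset S^\mp$, so that $S$ is a $\Z_2$-graded Clifford module.

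To compute $\dim S^\pm$, I would use the fact that for any ${\rm e}^i$ the map $s\mapsto {\rm e}^i\cdot_g s$ sends $S^+$ to $S^-$ and is invertible (since ${\rm e}^i\cdot_g {\rm e}^i\cdot_g s=-s$), yielding $\dim S^+=\dim S^-=2^{n/2-1}$. Finally, for the last assertion, $\Cl(V)^+\otimes\C$ is a subalgebra of $\Cl(V)\otimes\C\simeq\operatorname{End}(S)$ which preserves the two half-spinor spaces and therefore maps into $\operatorname{End}(S^+)\oplus\operatorname{End}(S^-)$. Comparing dimensions, $\dim_\C\Cl(V)^+\otimes\C=2^{n-1}=2\cdot \bigl(2^{n/2-1}\bigr)^2=\dim_\C\operatorname{End}(S^+)+\dim_\C\operatorname{End}(S^-)$, together with irreducibility of the $\Cl(V)^+$-action on each $S^\pm$ (which follows since $\Cl(V)^+$ together with a single odd element generates all of $\Cl(V)\otimes\C$ acting irreducibly on $S$), yields $\Cl(V)^+\otimes\C\simeq\operatorname{End}(S^+)\oplus\operatorname{End}(S^-)$, hence in particular the identification with $\operatorname{End}(S^\pm)$ that is used in the sequel.

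The main obstacle is really the first step: exhibiting the algebra isomorphism $\Cl(V)\otimes\C\simeq\operatorname{End}(S)$ cleanly; the cleanest route is the explicit construction of $S$ as $\Lambda W$ for a maximal isotropic subspace $W\subset V\otimes\C$ with $V\otimes\C=W\oplus\bar W$, letting elements of $W$ act by exterior multiplication and elements of $\bar W$ by (suitably normalised) contraction, and then verifying that the Clifford relations are satisfied. The remaining steps, being essentially checking the action of the chirality element, are routine. Since this proposition is a well-known classical fact, I would in practice refer to \cite{BGV} or \cite{LM} for the detailed verification rather than reproduce it in full.
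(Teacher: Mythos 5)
Your proof is correct and matches the standard argument; the paper itself offers no proof but simply cites \cite[Proposition~3.19]{BGV}, and the remark it adds afterwards identifies $S^\pm$ as the $\pm1$-eigenspaces of the chirality element $\omega_{\C}={\rm i}^{n/2}{\rm e}^1\cdot_g\cdots\cdot_g{\rm e}^n$, which is precisely the element you use to produce the grading. One small point worth making explicit: your dimension count already gives surjectivity of $\Cl(V)^+\otimes\C\hookrightarrow\operatorname{End}(S^+)\oplus\operatorname{End}(S^-)$ (injectivity being inherited from the isomorphism $\Cl(V)\otimes\C\simeq\operatorname{End}(S)$), so the appeal to irreducibility of each $S^\pm$ is not actually needed; and you correctly read the paper's shorthand $\operatorname{End}(S^\pm)$ as $\operatorname{End}(S^+)\oplus\operatorname{End}(S^-)$, since an isomorphism onto a single summand is impossible by dimension.
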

Notice that $S^\pm$ are defined as the eigenspaces of $S$ associated with the eigenvalues $\pm 1$ of the complex volume form $\omega_{\mathbb{C}}={\rm i}^{\frac{n}{2}} {\rm e}^1\cdot_g\dots\cdot_g {\rm e}^n$ with $n$ even. Now, there is a natural super trace on $\Cl(V)\otimes \mathbb{C}$, defined by
\begin{equation*}
	{\rm str}(a) :=
	\begin{cases}
		{\rm tr}^{S^+}(a)-{\rm tr}^{S^-}(a) & \text{if $a\in \Cl(V)^+$,}\\
		0 & \text{if $a \in \Cl(V)^-,$}
	\end{cases}
\end{equation*}
%\subsection{Rescaled Clifford elements}
where, as before, ${\rm tr}^E$ stands for the fibrewise trace on $\operatorname{End}(E)$. In order to relate this supertrace with the Berezin integral, we assign to any vector $v\in V^*$ the endomorphism $c^g(v)\in \operatorname{End}(\Lambda V) $ that uniquely extends to a morphism of algebra bundles
\begin{equation}\label{eq:cliffend}
c^g\colon \ \Cl(V) \longrightarrow \operatorname{End}(\Lambda V)
\end{equation}
defined by
$	%\label{eq:conforms}
	c^g(v)\bullet =v\wedge\bullet -v^{\sharp_g} \lrcorner\bullet$,
where $v^{\sharp_g}$ is the vector in $V$ associated to its covector $v$ by the musical isomorphism. The symbol map ${\bf s}^g\colon \Cl(V)\to \Lambda V$ is the isomorphism given by ${\bf s}^g(a):=c^g(a)1$. Indeed, to show ${\bf s}^g$ (as well as $c^g$) is injective, we assume that ${\bf s}^g(a)=0$ for some $a=\sum_{i_1<\dots<i_k} a_{i_1\dots i_k} {\rm e}^{i_1}\cdot_g\cdots\cdot_g {\rm e}^{i_k}$. Then $c^g(a)1=\sum_{i_1<\dots<i_k} a_{i_1\dots i_k} {\rm e}^{i_1}\wedge\dots\wedge {\rm e}^{i_k}=0$. Hence, $a_{i_1\dots i_k}=0$ and, thus, $a=0$. The bijectivity of ${\bf s}^g$ comes from the equality of the dimensions. In \cite[Proposition~3.21]{BGV}, it is shown that there is a~unique supertrace which is related to the Berezin integral by the following
\begin{equation}\label{eq:supertraceformula}
	{\rm str}(a)=(-2{\rm i})^{\frac{n}{2}}(T\circ {\bf s}^g)(a).
\end{equation}
for any $a\in \Cl(V)\otimes \mathbb{C}$.

\subsection*{Acknowledgements} The first named author would like to thank the Alfried Krupp Wissenschaftskolleg in Greifswald for the support. We are grateful to the Humboldt Foundation for funding a Linkage Programm between the University of Potsdam in Germany and the Lebanese University, as well as the American University of Beirut in Lebanon. We also thank the referees for their very helpful comments.

\pdfbookmark[1]{References}{ref}
\LastPageEnding

\end{document}